\newtheorem{theorem}{Theorem}[section]
\newtheorem{corollary}[theorem]{Corollary}
\newtheorem{lemma}[theorem]{Lemma}
\newtheorem{proposition}[theorem]{Proposition}
\theoremstyle{definition}
\newtheorem{definition}[theorem]{Definition}
\newtheorem{remark}[theorem]{Remark}
\newtheorem{example}[theorem]{Example}
\theoremstyle{remark}
\renewcommand{\theclaim}{\textup{\theclaim}}
\newtheorem*{acknowledgements}{Acknowledgements}
\numberwithin{equation}{section}
\def\openone
\newbox\ipbox
\newcommand{\ip}[2]{\left\langle #1\, , \,#2\right\rangle}
\newcommand{\diracb}[1]{\left\langle #1\mathrel{\mathchoice

{\setbox\ipbox=\hbox{$\displaystyle \left\langle\mathstrut
#1\right.$}

\vrule height\ht\ipbox width0.25pt depth\dp\ipbox}

{\setbox\ipbox=\hbox{$\textstyle \left\langle\mathstrut
#1\right.$}

\vrule height\ht\ipbox width0.25pt depth\dp\ipbox}

{\setbox\ipbox=\hbox{$\scriptstyle \left\langle\mathstrut
#1\right.$}

\vrule height\ht\ipbox width0.25pt depth\dp\ipbox}

{\setbox\ipbox=\hbox{$\scriptscriptstyle \left\langle\mathstrut
#1\right.$}

\vrule height\ht\ipbox width0.25pt depth\dp\ipbox}

}\right. }
\newcommand{\dirack}[1]{\left. \mathrel{\mathchoice

{\setbox\ipbox=\hbox{$\displaystyle \left.\mathstrut
#1\right\rangle$}

\vrule height\ht\ipbox width0.25pt depth\dp\ipbox}

{\setbox\ipbox=\hbox{$\textstyle \left.\mathstrut
#1\right\rangle$}

\vrule height\ht\ipbox width0.25pt depth\dp\ipbox}

{\setbox\ipbox=\hbox{$\scriptstyle \left.\mathstrut
#1\right\rangle$}

\vrule height\ht\ipbox width0.25pt depth\dp\ipbox}

{\setbox\ipbox=\hbox{$\scriptscriptstyle \left.\mathstrut
#1\right\rangle$}

\vrule height\ht\ipbox width0.25pt depth\dp\ipbox}

} #1\right\rangle}
\newcommand{\beq}{\begin{equation}}
\newcommand{\eeq}{\end{equation}}
\newcommand{\cj}[1]{\overline{#1}}
\newcommand{\bz}{\mathbb{Z}}
\newcommand{\br}{\mathbb{R}}
\newcommand{\bc}{\mathbb{C}}
\newcommand{\bn}{\mathbb{N}}
\def\blfootnote{\xdef\@thefnmark{}\@footnotetext}
\renewcommand{\mod}{\operatorname{mod}}
\newcommand{\supp}{\operatorname*{supp}}
\def\a{\mathfrak{a}}
\def\-{^{-1}}
\def\D{\mathscr{D}}
\def\ty{\emptyset}
\begin{document}

\title[Fuglede's conjecture and differential operators]{Fuglede's conjecture, differential operators and unitary groups of local translations}

\author{Piyali Chakraborty}
\address{[Piyali Chakraborty] University of Central Florida\\
	Department of Mathematics\\
	4000 Central Florida Blvd.\\
	P.O. Box 161364\\
	Orlando, FL 32816-1364\\
	U.S.A.\\} \email{Piyali.Chakraborty@ucf.edu}

\author{Dorin Ervin Dutkay}
\address{[Dorin Ervin Dutkay] University of Central Florida\\
	Department of Mathematics\\
	4000 Central Florida Blvd.\\
	P.O. Box 161364\\
	Orlando, FL 32816-1364\\
U.S.A.\\} \email{Dorin.Dutkay@ucf.edu}

\author{Palle E.T. Jorgensen}
\address{[Palle E.T. Jorgensen]University of Iowa\\
Department of Mathematics\\
14 MacLean Hall\\
Iowa City, IA 52242-1419\\}\email{palle-jorgensen@uiowa.edu}

\subjclass[2010]{47E05,42A16}
\keywords{momentum operator, self-adjoint operator, unitary group, Fourier bases, Fuglede conjecture, Poincar'e inequality }

\dedicatory{Dedicated to the memory of Professor Bent Fuglede.}

\begin{abstract}

The purpose of the present paper is to address multiple aspects of the Fuglede question dealing (Fourier spectra vs geometry) with a variety of $L^2$ contexts where we make precise the interplay between the three sides of the question: (i) existence of orthogonal families of Fourier basis functions (and associated spectra) on the one hand, (ii) extensions of partial derivative operators, and (iii) geometry of the corresponding domains, stressing systems of translation-tiles. We emphasize an account of old and  new  developments since the original 1974-paper by Bent Fuglede where the co-authors and Steen Pedersen have contributed.
 
\end{abstract}
\maketitle
\tableofcontents
\newcommand{\Ds}{\mathsf{D}}
\newcommand{\Dmax}{\mathscr D_{\operatorname*{max}}}
\newcommand{\Dmin}{\Ds_{\operatorname*{min}}}
\newcommand{\dom}{\operatorname*{dom}}

\section{Introduction}

\bibliographystyle{alpha}

We recall that, in its original form, the Fuglede question (see \cite{Fug74}) dealing with spectrum
vs geometry, addressed only the special case of open and bounded subsets $\Omega$ in $\br^n$.
We recall that in this context, Fuglede's question/conjecture is about the interconnection
between two properties that such sets $\Omega$ may or may not have, one referring to a notion
of spectrum, and the other to geometry. More specifically, the spectral part is the question is
when $L^2(\Omega)$ allows an orthogonal basis of Fourier frequencies, and the other (ii) asks
when $\Omega$ arises as a tile for some set of translation vectors in $\br^n$. In summary, we say
that $\Omega$ is spectral if (i) holds, and is tile in case (ii). The two, (i) and (ii), are known to be
equivalent if $\Omega$ is further assumed convex \cite{LM22}.

Even though Terence Tao et al (see \cite{Tao04, FMM06}) proved that the two (i) and (ii) are not equivalent, in dimension $n>2$, (neither implies the other), the study of a variety of related frameworks and
themes, dealing with connections between spectrum and geometry has blossomed, to now
include e.g., the case of more general classes of $L^2$-measure spaces  $L^2(\mu)$. And of course,
spectrum vs geometry is central in mathematics. Currently, by now the study of diverse classes
of choices of fractal, or self-similar, measures $\mu$  is actively pursued. And yet, in other studies,
the orthogonality property for the Fourier-frequencies is relaxed, e.g., by requiring instead only a
frame-property relative to the $L^2(\mu)$ space under consideration.
In fact, by now there are many diverse and very active research communities which study
such much more general formulations of spectrum vs geometry. This includes the present co-authors, see \cite{Jor18,DJ23,DJ15a,DJ15b,DJ13a,DJ13b,DHJ13,DJ12a,DJ12b,DHJ09,DJ08,DJ07a,DJ07b}, Jorgensen-Pedersen \cite{JPT15, JP99, JP98a, JP98b, JP93, JP92, JP91},
Iosevich et al \cite{IKP99, IK13, IR03, IMP17}, Nir Lev et al \cite{EL23, Lev22, LM22, DL19, GL20, LO17,
KL16}, to mention just a few. Recent papers by Jorgensen-Herr-Weber \cite{HJW23, HJW20,
HJW19a, HJW19b, HJW18} aim for a classification of non-atomic measures $\mu$ with compact
support in $\br^n$, such that $L^2(\mu)$ admits a (generally non-orthogonal) Fourier expansion. The
2018 CBMS-volume \cite{Jor18} by Jorgensen offers an (partial) overview.

  We should add that, for the case of infinite convolutions and random convolutions, Hadamard triples, and selfsimilar fractals,  the literature is vast. It addresses both issues of spectral duality, as well as related questions, its determination, and its implications. Here we cite the following papers in the general area, but there are many more, see \cite{LZ24, CDL24, LJW24, LW24, Liu24, YZ24, WYZ24, LMW24} and the papers cited there.

Our present paper focuses on the case of open domains $\Omega$ in $\br^n$ and the Hilbert space $L^2(\Omega)$. Here we then introduce the respective first order partial derivative operators in the $n$ coordinate directions. As operators in $L^2(\Omega)$, they are unbounded, but we point out that they have canonical minimal and maximal realizations. In this context, we then study how the Fuglede problem may be formulated in terms of choices of strongly continuous unitary representations $U$ of $\br^n$, acting on $L^2(\Omega)$. With the use of the Stone Theorem for unitary representations of the group $\br^n$ we show that the Fourier spectrum from the Fuglede question may be derived from the spectrum for $U$. We then show that the solution to the Fuglede question takes the form of an identification of certain systems of $n$  commuting self-adjoint operators, with each one arising as a restriction of the corresponding maximal $L^2(\Omega)$ partial derivative operator; or equivalent as extensions of the corresponding minimal operator. We note that such commuting self-adjoint operators do not always exist, but the existence (when affirmative) of such commuting systems of self-adjoint extension operators is an important question in its own right, and it throws new light on the classical Fuglede problem.

We will start from the origins of the Fuglede conjecture: Segal's question about the existence of commuting self-adjoint restrictions of the partial differential operators.

{\bf Segal's question.} As mentioned in \cite{Fug74}, in 1958 Irving Segal posed the following problem to Fuglede: Which are the connected open sets $\Omega\subset \br^n$ such that there exist, on the Hilbert space $L^2(\Omega)$, commuting self-adjoint restrictions $H_1,\dots,H_n$ of the partial differential operators $D_1=\frac1{2\pi i}\frac\partial{\partial x_1},\dots,D_n=\frac1{2\pi i}\frac\partial{\partial x_n}$ (acting on $L^2(\Omega)$ in the distribution sense), the commutation being understood in the sense of commuting spectral measures.

Fuglede proved in \cite{Fug74} that, for connected open sets $\Omega\subset\br^n$, of finite Lebesgue measure, satisfying a mild regularity condition (finiteness of the Poincar\' e constant, or Nikodym domains, see the Appendix), a necessary and sufficient condition is that there should exist a set $\Lambda\subset \br^n$ such that the functions $e_\lambda(x)=e^{2\pi i\lambda\cdot x}$, $\lambda\in\Lambda$, form a complete orthogonal family in $L^2(\Omega)$ (Theorem \ref{th2.1}); he called such sets {\it spectral}. He then formulated his famous conjecture: a set $\Omega$ is spectral if and only if it tiles $\br^n$ by translations. Fuglede also proved that simple domains like a disk or a triangle are not spectral. Steen Pedersen improved Fuglede's result in \cite{Ped87} to include general connected open sets without the Nikodym restriction, and even such sets of infinite measure.  

In tribute to Professor Fuglede’s captivating expository style, we present a detailed account of his and Steen Pedersen’s beautiful arguments. This paper was written with our students in mind, as we believe Professor Fuglede's work serves as an excellent illustration of key principles in spectral theory. 

In Section \ref{secfu}, we present Fuglede's proof of Theorem \ref{th2.1} and in Section \ref{secre} we present an example that shows why the Nikodym condition is not easy to remove.

In Section \ref{secpe} we present Pedersen's extensions of Theorem \ref{th2.1} from \cite{Ped87}. Steen Pedersen improved Fuglede’s result, not only by removing the Nikodym restriction but also allowing the domain to have infinite measure.

When there are commuting self-adjoint restrictions $\{H_i\}$ of the differential operators $\{D_i\}$ they have a joint spectral measure $E$ and one can construct a unitary group of operators $\{U(t)\}$ (Theorem \ref{thstone0}). The converse is also true, by the Generalized Stone Theorem (Theorem \ref{thgs}). Since the operators $\{H_i\}$ are restrictions of the differential operators $\{D_i\}$, the unitary group $U$ acts locally as translations (Definition \ref{defp5}). Conversely , if the unitary group $U$ acts as local translations then the self adjoint operators $\{H_i\}$ are indeed restrictions of the differential operators $\{D_i\}$ (Theorem \ref{thai}). Pedersen clarified the equivalence between (1) spectral sets, (2) the existence of  commuting self-adjoint restrictions $\{H_i\}$, and (3) the existence of unitary groups $U$ that act as local translations (Proposition \ref{prp6}, Theorem \ref{thai}), for connected open sets.

In section \ref{secun} we present some of our work on the one dimensional case when the set  $\Omega$ is a finite union of intervals. In this instance, more details can be provided for the domain of the self-adjoint restriction $H$ and for its unitary group $U$.

In section \ref{secex} we provide some examples that enhance the understanding and offer intuition for the results.

We use the last section as an Appendix where we list some of the fundamental results used in the arguments: various forms of the Spectral Theorem and of Stone's Theorem,  some important inequalities such as the Poincar\'e inequality and an Ehrling-Sobolev inequality, and a few others.

\section{Preliminaries}\label{secpre}

We denote by $m$ the Lebesgue measure on $\br^n$. For any open subset $\Omega$ of $\br^n$ with $0<m(\Omega)<\infty$ we use 
$$\ip{f}{g}=\int_\Omega f(x)\cj g(x)\,dx$$
as the inner product for the complex Hilbert space $L^2(\Omega)$. 

For $\lambda=(\lambda_1,\dots,\lambda_n)\in\br^n$ and $x=(x_1,\dots,x_n)\in\br^n$, we denote
$$e_\lambda(x)=e^{2\pi i\lambda\cdot x},\quad \lambda\cdot x=\sum_{i=1}^n\lambda_ix_i.$$

The differential operators 
$$D_j=\frac{1}{2\pi i}\frac\partial{\partial x_j},\quad(j=1,\dots, n)$$
are understood in general to act in the distribution sense on $L^2(\Omega)$. Thus $D_j$ is the ``maximal'' operator on $L^2(\Omega)$ with the domain 
$$\mathscr D(D_j)=\{u\in L^2(\Omega) : D_j u\in L^2(\Omega)\}.$$

On a complex Hilbert space $\mathfrak H$, let 
$$H=(H_1,\dots,H_n)$$
denote a family of $n$ self-adjoint (not necessarily bounded) operators which commute with one another, in the sense of commuting spectral measures. The domain of this family is 
$$\mathscr D(H)=\bigcap_{j=1}^n \mathscr D( H_j).$$

Denoting by $\xi=(\xi_1,\dots,\xi_n)$ the generic point of $\br^n$, we have the canonical spectral representation 
$$H=\int\xi\,dE,$$
that is 

\begin{equation}\label{eqE}
H_j=\int \xi_j\,dE,\quad (j=1,\dots,n),
\end{equation}

of the family $H$. 

Here $E$ denotes the spectral measure on $\br^n$ associated with $H$ (see Theorem \ref{thspectral}). It is defined from the spectral measures $E_i$ of the self-adjoint operators $H_i$ by 

\begin{equation}\label{eqprE}
E(\Delta_1\times\Delta_2\times\dots\times\Delta_n)=E_1(\Delta_1)E_2(\Delta_2)\dots E_n(\Delta_n),\mbox{ for }\Delta_1,\dots,\Delta_n\mbox{ measurable  in }\br.
\end{equation}

The support of $E$ is the {\it (simultaneous/ joint) spectrum} of $H$ and is denoted by $\sigma(H)$. It is a closed subset of $\br^n$. The atomic part of $E$ determines the {\it point spectrum} $\sigma_p(H)$ of $H$. It consists of all points $\lambda\in\br^n$ such that $E(\{\lambda\})\neq 0$, in other words, of all (simultaneous) {\it eigenvalues} for $H$, an eigenvalue $\lambda=(\lambda_1,\dots,\lambda_n)$ for $H=(H_1,\dots,H_n)$ being a point of $\br^n$ such that the subspace ({\it eigenspace})
$$E(\{\lambda\})=\{u\in\mathscr D(H) : Hu=\lambda u\}$$
is not $\{0\}$. (The relation $Hu=\lambda u$ means $H_ju=\lambda_j u$ for every $j=1,\dots, n$.)

\begin{remark}\label{rem2.1}
	There is another way to look at the self-adjoint restrictions of the maximal differential operators $D_i$: they are also self-adjoint {\it extensions} of the ``minimal'' differential operator $\frac{1}{2\pi i}\frac{\partial}{\partial x_i}$ on $C_0^\infty(\Omega)$ - the space of compactly supported infinitely differentiable functions on $\Omega$. 
	
	Indeed, if $H_i$ is a self-adjoint restriction of the maximal operator $D_i$, then, for $f\in C_0^\infty(\Omega)$, the linear functional 
	$$C_0^\infty(\Omega)\ni g\rightarrow\ip{H_if}{g}=(D_if,g)=(f,D_ig)=-\int_\Omega f(x)\frac{1}{2\pi i}\frac{\partial\cj g}{\partial x_i}(x)\,dx=\int_\Omega \frac{1}{2\pi i}\frac{\partial f}{\partial x_i}(x)\cj g(x)\,dx.$$
	extends to a continuous linear functional on $L^2(\Omega)$, and therefore $f\in\mathscr D(H_i^*)=\mathscr D(H_i)$, 
	and $H_if=H_i^*f=\frac{1}{2\pi i}\frac{\partial f}{\partial x_i}(x).$ This means that $H_i$ is an extension of the minimal operator. 
	
	We use the notation $$(f,\varphi)=f(\cj\varphi),\mbox{ for $\varphi\in C_0^\infty(\Omega)$ and a distribution $f$ on $\Omega$.}$$ 
	
	Conversely, if $H_i$ is a self-adjoint extension of the minimal operator  $\frac{1}{2\pi i}\frac{\partial}{\partial x_i}$ on $C_0^\infty(\Omega)$, then, for $f\in \mathscr D(H_i)=\mathscr D(H_i^*)$, and $g\in C_0^\infty(\Omega)$, 
	$$ \ip{ H_i f}{g}=\ip{f}{H_ig}=-\frac{1}{2\pi i}\int_\Omega f(x) \frac{\partial \cj g}{\partial x_i}(x)\,dx,$$
	which means that $H_if$ has a weak derivative $D_if$ equal to $H_if$. This means that $H_i$ is a restriction of the maximal operator $D_i$.

\end{remark}

\begin{definition}
	\label{defsp}
	A measurable set $\Omega$ in $\br^n$  with finite measure is called spectral if there exists an index set $\Lambda\subset\br^n$ such that the family of exponential functions $\{e_\lambda :\lambda\in\Lambda\}$ is an orthogonal basis for $L^2(\Omega)$.
\end{definition}

\section{Bent Fuglede: Spectral sets and joint spectra}\label{secfu}

The next result of Fuglede is the main focus of our paper. It gives an answer to Segal's question: under some mild restrictions (bounded Nikodym domain, Definition \ref{defnik}) there are self-adjoint restrictions of the partial differential operators if and only if the set $\Omega$  is spectral, i.e., there is an orthogonal basis of exponential functions for $L^2(\Omega)$. 

We present here Fuglede's proof and we add some motivation and underline the key steps in the argument. 

\begin{theorem}\label{th2.1}\cite[Theorem I]{Fug74}	Let $\Omega\subset\br^n$ be a finite measure open and connected Nikodym region (see Definition \ref{defnik}).
	
	\begin{enumerate}
		\item[(a)] Let $H=(H_1,\dots,H_n)$ denote a commuting family (if any) of self-adjoint restrictions $H_j$ of $D_j$ on $L^2(\Omega)$, $j=1,\dots,n$. Then $H$ has a discrete spectrum, each point $\lambda\in \sigma(H)$ being a simple eigenvalue for $H$ with the eigenspace $\bc e_\lambda$, and hence $(e_\lambda)_{\lambda\in \sigma(H)}$ is an orthogonal basis for $L^2(\Omega)$. Moreover, $\sigma(H)=\sigma_p(H)=\{\lambda\in\br^n : e_\lambda\in\mathscr D( H)\}$.
		\item[(b)] Conversely, let $\Lambda$ denote a subset (if any) of $\br^n$ such that $(e_\lambda)_{\lambda\in\Lambda}$ is an orthogonal basis for $L^2(\Omega)$. Then there exists a unique commuting family $H=(H_1,\dots,H_n)$ of self-adjoint restrictions $H_j$ of $D_j$ on $L^2(\Omega)$ with the property that $\{e_\lambda : \lambda\in\Lambda\}\subset\mathscr D(H)$, or equivalently that $\Lambda=\sigma(H)$.
	\end{enumerate}
\end{theorem}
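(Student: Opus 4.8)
\emph{Overall plan and Part (a), eigenstructure.} I would treat the two directions separately; the only genuinely substantial point is buried in~(a), after which~(b) reduces to diagonalization plus a reference back to~(a). First I would identify the joint eigenfunctions: if $\lambda\in\sigma_p(H)$ and $Hf=\lambda f$ with $f\ne 0$, then $D_jf=H_jf=\lambda_jf$ in the distribution sense for all $j$, so $\partial_j(e_{-\lambda}f)=0$ for all $j$; since $\Omega$ is open and connected, $e_{-\lambda}f$ agrees a.e.\ with a constant, so $f\in\bc e_\lambda$. As $m(\Omega)<\infty$ we always have $e_\lambda\in L^2(\Omega)$, and $e_\lambda\in\mathscr D(H)$ exactly when $H_je_\lambda=D_je_\lambda=\lambda_je_\lambda$ for all $j$, i.e.\ exactly when $\lambda\in\sigma_p(H)$. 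Thus every joint eigenvalue is simple with eigenspace $\bc e_\lambda$ and $\sigma_p(H)=\{\lambda:e_\lambda\in\mathscr D(H)\}$; what remains is to prove $\sigma(H)=\sigma_p(H)$, from which completeness then follows.

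\emph{Part (a), $\sigma(H)=\sigma_p(H)$; here the Nikodym hypothesis is used.} I would start with the point $0$. For $f\in\mathscr D(H)$ one has $\partial_jf=2\pi iH_jf\in L^2(\Omega)$, so $f\in W^{1,2}(\Omega)$ and $\|\nabla f\|_2^2=4\pi^2\sum_j\|H_jf\|_2^2$; writing $f_\Omega:=m(\Omega)^{-1}\int_\Omega f$, the orthogonal splitting $f=(f-f_\Omega)+f_\Omega$ and the Poincar\'e inequality $\|f-f_\Omega\|_2\le C\|\nabla f\|_2$ for bounded Nikodym regions (see the Appendix and Definition~\ref{defnik}) give
\[
\|f\|_2^2=\|f-f_\Omega\|_2^2+|f_\Omega|^2m(\Omega)\le C^2\|\nabla f\|_2^2+|f_\Omega|^2m(\Omega).
\]
If $0\in\sigma(H)$, then by the Spectral Theorem (Theorem~\ref{thspectral}) there are $f_k\in\mathscr D(H)$ with $\|f_k\|_2=1$ and $\sum_j\|H_jf_k\|_2^2\to 0$, hence $\|\nabla f_k\|_2\to 0$; the displayed inequality forces $|f_{k,\Omega}|^2m(\Omega)\to 1$, and since $|f_{k,\Omega}|^2m(\Omega)\le 1$ always, $|f_{k,\Omega}|\to m(\Omega)^{-1/2}$, so along a subsequence $f_{k,\Omega}\to c$ with $|c|=m(\Omega)^{-1/2}$ and $\|f_k-c\|_2\le\|f_k-f_{k,\Omega}\|_2+|f_{k,\Omega}-c|\,m(\Omega)^{1/2}\to 0$. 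Since each $H_j$ is closed and $H_jf_k\to 0$, the constant function $c$ lies in $\mathscr D(H)$ with $H_jc=0$ for all $j$; as $c\ne 0$ this says $1=e_0\in\mathscr D(H)$, i.e.\ $0\in\sigma_p(H)$. For a general $\mu\in\br^n$ I would repeat this for the conjugated family $\widetilde H^{(\mu)}_j:=e_{-\mu}(H_j-\mu_j)e_\mu$: multiplication by $e_\mu$ is unitary on $L^2(\Omega)$ and $e_{-\mu}(D_j-\mu_j)e_\mu=D_j$, so $\widetilde H^{(\mu)}=(\widetilde H^{(\mu)}_1,\dots,\widetilde H^{(\mu)}_n)$ is again a commuting family of self-adjoint restrictions of $D_1,\dots,D_n$ on the same $\Omega$, with $\sigma(\widetilde H^{(\mu)})=\sigma(H)-\mu$, and with $0\in\sigma_p(\widetilde H^{(\mu)})$ (eigenvector the constant function $1$) if and only if $\mu\in\sigma_p(H)$ (eigenvector $e_\mu$). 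Hence $\mu\in\sigma(H)\Rightarrow\mu\in\sigma_p(H)$, so $\sigma(H)=\sigma_p(H)$.

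\emph{Part (a), conclusion, and Part (b).} From $\sigma(H)=\sigma_p(H)$ the spectral measure $E$ is purely atomic: the support of a nonzero continuous part of $E$ would be a nonempty closed subset of the countable set $\sigma_p(H)$, hence would contain an isolated point, which would be an atom --- a contradiction. By the Spectral Theorem, $L^2(\Omega)=\bigoplus_{\lambda\in\sigma_p(H)}E(\{\lambda\})L^2(\Omega)=\bigoplus_{\lambda\in\sigma_p(H)}\bc e_\lambda$, so $(e_\lambda)_{\lambda\in\sigma(H)}$ is an orthogonal basis of $L^2(\Omega)$ (being a spectrum, $\sigma(H)$ is then uniformly discrete in $\br^n$), and all assertions of~(a) follow. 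For~(b), given $\Lambda$ with $(e_\lambda)_{\lambda\in\Lambda}$ an orthogonal basis, I would \emph{define} $H_j$ to be the diagonal operator $\sum_\lambda c_\lambda e_\lambda\mapsto\sum_\lambda\lambda_jc_\lambda e_\lambda$ on its natural maximal domain: it is self-adjoint (the $\lambda_j$ are real and the $e_\lambda$ an orthogonal basis), and the $H_j$ are simultaneously diagonalized, so their spectral measures commute. That $H_j\subseteq D_j$ is checked by expanding $f=\sum_\lambda c_\lambda e_\lambda\in\mathscr D(H_j)$ and integrating by parts against $\varphi\in C_0^\infty(\Omega)$, using $D_je_\lambda=\lambda_je_\lambda$ and the $L^2$-convergence of $\sum c_\lambda e_\lambda\to f$ and $\sum\lambda_jc_\lambda e_\lambda\to H_jf$: $(D_jf,\varphi)=\sum_\lambda\lambda_jc_\lambda(e_\lambda,\varphi)=(H_jf,\varphi)$. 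By construction $\{e_\lambda:\lambda\in\Lambda\}\subseteq\mathscr D(H)$, and $\sigma(H)=\Lambda$ because $\Lambda$ is closed --- in fact uniformly discrete, since $\xi\mapsto\int_\Omega e^{2\pi i\xi\cdot x}\,dx$ is continuous and equals $m(\Omega)\ne 0$ at $\xi=0$, so orthogonality makes $\Lambda-\Lambda$ miss a punctured neighborhood of the origin. For uniqueness, any commuting family $H'$ of self-adjoint restrictions of the $D_j$ with $\{e_\lambda:\lambda\in\Lambda\}\subseteq\mathscr D(H')$ satisfies $H'_je_\lambda=D_je_\lambda=\lambda_je_\lambda$, so $H'_j$ extends the diagonal operator on finite sums, hence its closure $H_j$; a self-adjoint extension of the self-adjoint operator $H_j$ equals $H_j$, so $H'=H$. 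The stated equivalence follows from~(a): if $\{e_\lambda:\lambda\in\Lambda\}\subseteq\mathscr D(H)$ then $\Lambda\subseteq\sigma_p(H)=\sigma(H)$, and since $(e_\lambda)_{\lambda\in\Lambda}$ and $(e_\mu)_{\mu\in\sigma(H)}$ are both orthogonal bases a point $\mu\in\sigma(H)\setminus\Lambda$ would force $e_\mu=0$, impossible; conversely $\Lambda=\sigma(H)$ gives $\{e_\lambda:\lambda\in\Lambda\}\subseteq\mathscr D(H)$ at once, since $\sigma(H)=\sigma_p(H)=\{\lambda:e_\lambda\in\mathscr D(H)\}$.

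\emph{Expected main obstacle.} Everything apart from the equality $\sigma(H)=\sigma_p(H)$ in~(a) is either the ODE rigidity of joint eigenfunctions on a connected open set or bookkeeping with the Spectral Theorem. That equality is the real work, and is the only place the Nikodym/Poincar\'e hypothesis is essential: without the bound $\|f\|_2^2\le C^2\|\nabla f\|_2^2+|f_\Omega|^2m(\Omega)$ the argument above breaks, genuine continuous spectrum can appear, and the theorem itself fails --- which is precisely the point of Section~\ref{secre}.
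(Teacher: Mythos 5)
Your proposal is correct and follows essentially the same route as the paper: approximate joint eigenvectors extracted from the spectral measure, the Poincar\'e/Nikodym inequality to force strong convergence onto the one-dimensional space spanned by $e_\lambda$, closedness of the $H_j$ to produce a genuine eigenvector, connectivity of $\Omega$ to identify joint eigenfunctions with exponentials, and the diagonal construction with the adjoint/core argument for part (b). The differences are only organizational: the paper packages the key step as Lemma \ref{lem2.2} applied with the conjugated Poincar\'e inequality \eqref{eqPoincare2} and a finite-rank weak-to-strong argument, whereas you work at $\lambda=0$ with the explicit splitting $f=(f-f_\Omega)+f_\Omega$ and reduce general $\mu$ via the unitary conjugation $H_j\mapsto e_{-\mu}(H_j-\mu_j)e_\mu$ --- which is the same computation in different clothing.
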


We start in a general setting: 	on a complex Hilbert space $\mathfrak H$, let $$H=(H_1,\dots,H_n)$$
denote a family of $n$ self-adjoint (not necessarily bounded) operators which commute with one another, in the sense of commuting spectral measures.

We want to show that, for $\lambda$ in the spectrum $\sigma(H)$, the spectral projection $E(\{\lambda\})$ is nonzero. For this, we consider a sequence of balls $$B_{1/p}(\lambda)=\{\xi\in\br^n : \|\xi-\lambda\|<1/p\},\quad (p=1,2,\dots),$$
centered at $\lambda$ and with radius converging to zero. 

Since $\lambda$ is in the spectrum $\sigma(H)$, it follows that $E(B_{1/p}(\lambda))\neq 0$ (see Theorem\ref{thspectral}(b)), and therefore we can choose $u_p\in E(B_{1/p}(\lambda))\mathfrak H$ so that $\|u_p\|=1$; note that $u_p\in \mathscr D( H)$, because 
$$\int |\xi|^2\,d\ip{E(\xi)u_p}{u_p}=\int |\xi|^2\,d\ip{E(\xi)E(B_{1/p}(\lambda))u_p}{E(B_{1/p}(\lambda))u_p}=$$$$\int_{B_{1/p}(\lambda)}|\xi|^2\,d\ip{E(\xi)u_p}{u_p}\leq (\|\lambda\|+1/p)^2<\infty,$$
since $\|\xi\|\leq \|\xi-\lambda\|+\|\lambda\|<\|\lambda\|+1/p$, for $\xi\in B_{1/p}(\lambda)$, and since $\|u_p\|=1$ (see also Remark \ref{remspec}).

We also have 
$$\sum_{j=1}^n\|H_ju_p-\lambda_j u_p\|^2=\sum_{j=1}^n\|(H_j-\lambda_jI) E(B_{1/p}(\lambda))u_p\|^2=\int_{B_{1/p}(\lambda)}\|\xi-\lambda\|^2\,d\ip{E(\xi)u_p}{u_p}\leq 1/p^2.$$

It follows that, for any $j=1,\dots,n$
\begin{equation}
	H_ju_p-\lambda_ju_p\rightarrow0\mbox{ strongly in }\mathfrak H\mbox{ as }p\rightarrow\infty.
	\label{eql2.2.2}
\end{equation}

Since $\|u_p\|=1$ for all $p\in\bn$, passing if necessary, to a suitable subsequence, we may assume that there is a vector $u\in\mathfrak H$ such that 
\begin{equation}
	u_p\rightarrow u\mbox{ weakly in }\mathfrak H.
\end{equation}

The key question is: how can we get that $u_p\rightarrow u$ strongly? 

Let's say we obtain that. Then $\lambda_j u_p\rightarrow \lambda_j u$ strongly, and using \eqref{eql2.2.2}, we have that $H_ju_p\rightarrow \lambda_j u$ strongly. Since $u_p\rightarrow u$, and since $H_j$ is self-adjoint and therefore closed, we get that $u$ is in $\mathscr D( H_j)$ and $H_ju=\lambda_j u$. Also, $\|u_p\|=1$ for all $p$, and it follows that $\|u\|=1$ hence it is a non-zero eigenvector.

Hence, the question is: if we know that $H_ju_p-\lambda_j u_p\rightarrow 0$ strongly and $u_p\rightarrow  u$ weakly, what extra conditions do we need to obtain that $u_p\rightarrow u$ strongly? 

Here is one of the main ideas in Fuglede's paper; we will try to explain what is the intuition behind it. In the particular case when our operators $H_j$ are commuting self-adjoint restrictions of the differential operators $D_j$, the conditions $H_ju_p-\lambda_ju_p\rightarrow 0$ strongly tell us something about the partial derivatives. However, if we know all the partial derivatives of a function in a connected domain, then we know the function, up to an additive constant. But we need a more quantitative version of this statement: the $L^2$-convergence of the partial derivatives should imply some $L^2$-convergence of the function (if we ignore the additive constant). This is given to us by the Poincar\'e inequality \eqref{eqPoincare}! We will use the second version \eqref{eqPoincare2} (see also Definition \ref{defnik}).

Fuglede formulates a very nice, more general lemma where this argument can be used. Here it is.

\begin{lemma}\label{lem2.2}\cite[p. 104]{Fug74}	On a complex Hilbert space $\mathfrak H$, let 
	$$H=(H_1,\dots,H_n)$$
	denote a family of $n$ self-adjoint (not necessarily bounded) operators which commute with one another, in the sense of commuting spectral measures. 
	Let $P$ denote a finite dimensional orthogonal projection operator on $\mathfrak H$, and let $\lambda=(\lambda_1,\dots,\lambda_n)$ denote a given point of the spectrum $\sigma(H)$. Suppose that there exists a finite constant $C$ such that 
	\begin{equation}\label{eq2.2.1}
		\|u-Pu\|^2\leq C\sum_{j=1}^n\|H_ju-\lambda_ju\|^2
	\end{equation}
	for all $u\in\mathscr D(H)$. Then 
	$$0\neq E(\{\lambda\})\leq P,$$
	that is, $\lambda$ is an eigenvalue for $H$ with eigenspace $ E(\{\lambda\})$ contained in the range of $P$. If $E(\{\lambda\})=P$ then $\lambda$ is an isolated point of $\sigma(H)$, the distance between $\lambda$ and $\sigma(H)\setminus\{\lambda\}$ being $\geq1/C^{1/2}$.
	
\end{lemma}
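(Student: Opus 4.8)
The plan is to run the compactness argument sketched just before the lemma statement, using the hypothesis \eqref{eq2.2.1} precisely at the point where we need the weak limit $u$ to be a strong limit. First I would take the sequence $u_p \in E(B_{1/p}(\lambda))\mathfrak H$ with $\|u_p\|=1$ constructed above; as already noted, $u_p \in \mathscr D(H)$ and $\sum_{j=1}^n \|H_j u_p - \lambda_j u_p\|^2 \le 1/p^2 \to 0$. Passing to a subsequence, $u_p \to u$ weakly. Now I would apply \eqref{eq2.2.1} to the difference $u_p - u_q$ (which lies in $\mathscr D(H)$): this gives
\begin{equation*}
\|(u_p-u_q) - P(u_p-u_q)\|^2 \le C\sum_{j=1}^n \|H_j(u_p-u_q) - \lambda_j(u_p-u_q)\|^2 \le C\left(\tfrac1p + \tfrac1q\right)^2,
\end{equation*}
so $(I-P)u_p$ is Cauchy, hence strongly convergent. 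Since $P$ is finite-dimensional, it is compact, so $Pu_p \to Pu$ strongly along the subsequence; adding these, $u_p \to u$ strongly. Then, exactly as in the discussion preceding the lemma, $H_j u_p \to \lambda_j u$ strongly, closedness of $H_j$ gives $u \in \mathscr D(H_j)$ with $H_j u = \lambda_j u$, and $\|u\|=\lim\|u_p\|=1$, so $\lambda$ is an eigenvalue; this shows $E(\{\lambda\}) \neq 0$.

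Next I would show $E(\{\lambda\}) \le P$, i.e.\ every eigenvector $v$ with $Hv = \lambda v$ satisfies $Pv = v$. This is immediate from \eqref{eq2.2.1} applied to $u = v$: the right-hand side vanishes, so $\|v - Pv\| = 0$. Hence the eigenspace $E(\{\lambda\})\mathfrak H$ is contained in the range of $P$, which is the assertion $0 \neq E(\{\lambda\}) \le P$.

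For the last statement, assume $E(\{\lambda\}) = P$ and suppose, for contradiction, that there is $\mu \in \sigma(H)$ with $\mu \neq \lambda$ and $\|\mu - \lambda\| < 1/C^{1/2}$. Choosing $\varepsilon>0$ with $\|\mu-\lambda\| + \varepsilon < 1/C^{1/2}$ and picking $0 \neq w \in E(B_\varepsilon(\mu))\mathfrak H$, one has $w \in \mathscr D(H)$ and $\sum_j \|H_j w - \lambda_j w\|^2 = \int_{B_\varepsilon(\mu)} \|\xi - \lambda\|^2 \, d\ip{E(\xi)w}{w} \le (\|\mu-\lambda\|+\varepsilon)^2 \|w\|^2$. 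Plugging into \eqref{eq2.2.1} and using $Pw = E(\{\lambda\})w$, which is $0$ since $B_\varepsilon(\mu)$ excludes $\lambda$, gives $\|w\|^2 \le C(\|\mu-\lambda\|+\varepsilon)^2\|w\|^2 < \|w\|^2$, a contradiction. Therefore $\dist(\lambda, \sigma(H)\setminus\{\lambda\}) \ge 1/C^{1/2}$, and in particular $\lambda$ is isolated.

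The main obstacle — really the only non-formal point — is the passage from weak to strong convergence of $u_p$; everything hinges on feeding the \emph{difference} $u_p - u_q$ into the Poincaré-type inequality \eqref{eq2.2.1} to control $(I-P)u_p$, and then invoking finite-dimensionality (compactness) of $P$ for the complementary piece. One should also be slightly careful that the subsequence extraction is consistent across all $j = 1,\dots,n$ simultaneously, but since there are only finitely many indices this is routine.
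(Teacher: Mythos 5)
Your proof is correct and follows essentially the same route as the paper: the same sequence $u_p\in E(B_{1/p}(\lambda))\mathfrak H$, the inequality \eqref{eq2.2.1} to upgrade weak to strong convergence, closedness of the $H_j$ to produce the eigenvector, \eqref{eq2.2.1} again to get $E(\{\lambda\})\leq P$, and a spectral-projection estimate for the isolation claim. The two deviations are cosmetic: the paper applies \eqref{eq2.2.1} to $u_p$ itself (the right-hand side is $\leq C/p^2\to0$, so $(I-P)u_p\to0$ directly, no Cauchy argument on differences needed), and for the last part it works with the punctured ball $B_\rho^*=\{\xi : 0<\|\xi-\lambda\|<\rho\}$ rather than a ball around a nearby spectral point $\mu$. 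One small slip in your final step: the condition $\|\mu-\lambda\|+\varepsilon<1/C^{1/2}$ does not by itself guarantee that $\lambda\notin B_\varepsilon(\mu)$ (if $\mu$ is very close to $\lambda$, the ball may swallow $\lambda$ and then $E(\{\lambda\})w$ need not vanish); you must additionally require $\varepsilon<\|\mu-\lambda\|$, after which the argument goes through --- the paper's punctured ball sidesteps this automatically.
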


\begin{proof}
	The argument begins as above, for $\lambda\in\sigma(H)$, using the spectral projections $E(B_{1/p(\lambda)})$ we obtain a sequence $u_p\in\mathfrak H$, $\|u_p\|=1$, such that $H_ju_p-\lambda_ju_p\rightarrow 0$ for all $j=1,\dots,n$ and $u_p\rightarrow u$ weakly. Then, using \eqref{eq2.2.1}, we have that 
	\begin{equation}
		u_p-Pu_p\rightarrow 0\mbox{ strongly in }\mathfrak H.
		\label{eq2.2.3}
	\end{equation}
	
	Since $u_p\rightarrow u$ weakly, $Pu_p\rightarrow Pu$ weakly, and since $P$ is finite dimensional, it follows that $Pu_p\rightarrow Pu$ strongly. Combining this with \eqref{eq2.2.3} we obtain that $u_p\rightarrow Pu$ strongly, hence weakly, and therefore $Pu=u$. 
	
	Following the argument presented just before Lemma \ref{lem2.2}, we get that $u\in\mathscr D( H)$, $H_ju=\lambda_j u$ for each $j=1,\dots,n,$ $\|u\|=1$ and $E(\{\lambda\})u=u$, so $E(\{\lambda\})\neq 0$. 
	
	For any $u\in E(\{\lambda\})\mathfrak H$ we have $H_ju=\lambda_ju$ for every $j=1,\dots,n$, and hence, it follows from \eqref{eq2.2.1} that $u=Pu$, that is $u\in P\mathfrak H$.

	Suppose now, in addition to \eqref{eq2.2.1}, that $E(\{\lambda\})=P$, and let
	$$B^*_\rho=\{\xi\in\br^n : 0<\|\xi-\lambda\|<\rho\},$$
	for some fixed $\rho$ such that $0<\rho<1/C^{1/2}$. For any $u\in E(B_\rho^*)\mathfrak H$ we obtain $u\in\mathscr D (H)$ and 
	\begin{equation}
	\sum_{j=1}^n\|H_ju-\lambda_ju\|^2=\int_{B_\rho^*}\|\xi-\lambda\|^2\,d\ip{E(\xi)u}{u}\leq \rho^2\|u\|^2.
	\label{eq2.2.4}
	\end{equation}
	
	On the other hand, 
	$$\|u-Pu\|^2=\|u\|^2,$$
	since $Pu=E(\{\lambda\})u=0$, because $\lambda\not\in B_\rho^*$, and therefore the corresponding spectral projections are orthogonal. It follows now from \eqref{eq2.2.1} and \eqref{eq2.2.4} that $u=0$, because $\rho^2<1/C$. Consequently $E(B_\rho^*)=0$ and so $B_\rho^*\cap \sigma(H)=\ty$, which means that the distance from $\lambda$ to $\sigma(H)\setminus\{\lambda\}$ is at least $1/C^2$. 
\end{proof}

With Lemma \ref{lem2.2}, the proof of Theorem \ref{th2.1} is quite easy. 

\begin{proof}[Proof of Theorem \ref{th2.1}]
	
For part (a), take $H_j$ to be the self-adjoint restrictions of the differential operators $D_j$, and let $\lambda\in\sigma(H)$. Poincar\'e's inequality \eqref{eqPoincare2} shows that we can take $P=P_\lambda$ in Lemma \ref{lem2.2}, where $P_\lambda$ is the one-dimensional projection $P_\lambda u=\frac{1}{m(\Omega)}\ip{u}{e_\lambda}e_\lambda$ for every $u\in L^2(\Omega)$. It follows that 
	$$E(\{\lambda\})=P_\lambda,\quad \mbox{for every }\lambda\in\sigma(H),$$
	because the range $\bc e_\lambda$ of $P_\lambda$ is one-dimensional, and non-zero. By the final assertion of Lemma \ref{lem2.2}, the spectrum $\sigma(H)$ is discrete, and hence $H$ has a pure point spectrum, and $\sigma(H)=\sigma_p(H)$. 
	
	Thus, we have the following implications: if $\lambda\in\sigma(H)$ then $e_\lambda=P_\lambda e_\lambda=E(\{\lambda\})e_\lambda \in \mathscr D( H)$. If $e_\lambda\in \mathscr D( H)$ then $He_\lambda=\lambda e_\lambda$ so $\lambda\in\sigma(H)$. Therefore 
	$$\sigma(H)=\{\lambda\in\br^n : e_\lambda\in\mathscr D(H)\}=\{\lambda\in\br^n: He_\lambda=\lambda e_\lambda\},$$
	and since $\sum_{\lambda\in \sigma(H)}E(\{\lambda\})=I_{\mathfrak H}$, it follows that $\{e_\lambda : \lambda\in \sigma(H)\}$ is an orthogonal basis for $\mathfrak H$.

	For part (b), let $\Lambda$ denote a subset of $\br^n$ such that $(e_\lambda)_{\lambda\in\Lambda}$ is an orthogonal basis for $L^2(\Omega)$. Define 
	$$\mathscr D( H_j)=\left\{\sum_{\lambda\in\Lambda}c_\lambda e_\lambda : \sum_{\lambda\in\Lambda}(1+|\lambda_j|^2)|c_\lambda|^2<\infty\right\}, \mbox{ for }j=1,\dots n,$$
	and 
	$$H_j\sum_{\lambda\in\Lambda}c_\lambda e_\lambda =\sum_{\lambda\in\Lambda}\lambda_jc_\lambda e_\lambda .$$
	
	If some vector $v=\sum_{\lambda\in\Lambda }v_\lambda e_\lambda$ is in the domain of the adjoint $\mathscr D(H_j^*)$, then the linear functional $\mathscr D( H_j)\ni u\mapsto \ip{H_ju}{v}$ must be bounded, so there exists a constant $A\geq 0$ such that 
	$$\left|\sum_{\lambda\in\Lambda}\lambda_jc_\lambda \cj v_\lambda\right|\leq  A\left(\sum_{\lambda\in\Lambda}|c_\lambda|^2\right)^{1/2},\mbox {for all finitely supported sequences }(c_\lambda)_{\lambda\in\Lambda}.$$
	But this implies that 
	$$\left(\sum_{\lambda\in\Lambda}|\lambda_j|^2|v_\lambda|^2\right)^{1/2}\leq A^2<\infty,$$
	and this means that $v\in\mathscr D( H_j)$. Consequently, $\mathscr D( H_j^*)\subseteq \mathscr D(H_j)$ and $H_j$ is self-adjoint. Since $H_je_\lambda=\lambda_je_\lambda=D_je_\lambda$, it follows that $H_j$ is a restriction of the maximal operator $D_j$.
	
	Note also that, in the basis $(e_\lambda)_{\lambda\in \Lambda}$, the operators $H_j$ have diagonal infinite matrices, therefore, the same will be true for their spectral projections. Hence, the operators $H_j$ have commuting spectral measures. 
	
	For uniqueness, if $H$ is a family of commuting self-adjoint restrictions $H_j$ of $D_j$, with $\{e_\lambda :\lambda\in\Lambda\}\subset\mathscr D(H)$. Since $\{e_\lambda :\lambda\in\Lambda\}$ is an orthogonal basis, $H_j$ has to be the closure of its restriction $H_j^0$ to the subspace algebraically spanned by $\{e_\lambda : \lambda\in\Lambda\}$; also $H_je_\lambda=D_je_\lambda=\lambda_je_\lambda$. Therefore $H_j$ is uniquely determined from $\Lambda$. 
\end{proof}

\begin{remark}\label{rem2.3}
 Note that $\Omega$ does not have to be connected nor a Nikodym set for part (b). 
\end{remark}

\section{More on the Nikodym restriction}\label{secre}
In this section we present an example that shows why the Nikodym restriction is not easy to remove. We will build a bounded connected open set $\Omega$ in $\br^2$ and a sequence of functions $\{u_p\}$ in $C^\infty(\Omega)$ with the following properties 
\begin{enumerate}
	\item $\|u_p\|=1$ for all $p$.
	\item $u_p\rightarrow 0$ weakly in $L^2(\Omega)$.
	\item $D_ju_p\rightarrow 0$ strongly in $L^2(\Omega)$ for $j=1,2$.
	\end{enumerate}

This will show that the Poincar\'e inequality does not hold on $\Omega$, and the arguments used in the previous section for the proof of Fuglede's Theorem \ref{th2.1} do not work for this domain.

We define the set $\Omega$ in $\br^2$ as follows. First let $S_n=(x_n,x_n+s_n)\times(0,s_n)$ be a sequence of disjoint open squares with bottom side on the $x$-axis, $0=x_1<x_1+s_1<x_2<x_2+s_2<\dots$. We will adjust the numbers $\{x_n\}$ and $\{s_n\}$ later. The squares are smaller and smaller, and we will require 
$\sum_n s_n<\infty$. 

We connect two consecutive squares $S_n$, $S_{n+1}$ with a very thin band $B_n=[x_n+s_n,x_{n+1}]\times (0,\delta_n)$, $\delta_n>0$. 

The set $\Omega$ is 
$$\Omega=\bigcup_{n=1}^\infty (S_n\cup B_n).$$

First, we pick the sides of the squares to be very small $s_n=\frac1{2^n}$. Also, we pick the lengths of the bands to be $l_n=x_{n+1}-(x_n+s_n)=\frac1{2^n}$. Note that $\Omega$ is bounded, open and connected. 

Now we construct the functions $\tilde u_p$, $p=1,2,\dots$, and then we normalize $u_p=\frac{1}{\|\tilde u_p\|}\tilde u_p$. The function $\tilde u_p$ will be supported on the square $S_{4p}$ together with the connected bands $B_{4p-1}$ and $B_{4p}$. We use $4p$ to make sure that the supports of the functions $u_p$ are disjoint.

On the square $S_{4p}$ the functions $\tilde u_p$ will be constant $c_p={2^{4p}}$ (to make the $L^2$-norm on $S_{4p}$ equal to 1). On all the other squares, the function $\tilde u_p$ will be $0$. To make the function $\tilde u_p$ differentiable, we use the bands $B_{4p-1}$ and $B_{4p}$ to connect smoothly the values on the square $S_{4p}$ to the zero values on the squares $S_{4p-1}$ and $S_{4p+1}$. For this,  we consider a $C^\infty$-function  $f_p:\br\rightarrow[0,c_p]$ such that 
$$f_p(x)=\left\{\begin{array}{cc}
	0,&\mbox{ if }x<x_{4p-1}+s_{4p-1},\\
	\mbox{ smooth on the band $B_{4p-1}$},&\mbox{ if }x\in[x_{4p-1}+s_{4p-1},x_{4p}],\\
	c_p,&\mbox{ if }x\in[x_{4p},x_{4p}+s_{4p}],\\
	\mbox{ smooth on the band $B_{4p}$},&\mbox{ if }x\in[x_{4p}+s_{4p},x_{4p+1}],\\
	0,&\mbox{if }x>x_{4p}. 
\end{array}\right.
$$.

The function $f_p$ should increase from $0$ to $c_p$ on the interval $[x_{4p-1}+s_{4p-1},x_{4p}]$ of length $l_{4p-1}=\frac{1}{2^{4p-1}}$. This gives an average slope of $2^{4p-1}c_p=2^{8p-1}$. We can make sure that the slope of $f_p$ is close to this value on $L^2$-average (at least, not more than double), so  
\begin{equation}\label{eqc.1}
	\int_{x_{4p-1}+s_{4p-1}}^{x_{4p}}|f_p'(x)|^2\,dx\leq \int_{x_{4p-1}+s_{4p-1}}^{x_{4p}} (2\cdot 2^{8p-1})^2\,dx=\frac{1}{2^{4p-1}}\cdot 2^{16p}=2^{12p+1}.
\end{equation}
Similarly, we can make sure that 
\begin{equation}\label{eqc.2}
	\int_{x_{4p}+s_{4p}}^{x_{4p+1}}|f_p'(x)|^2\,dx\leq\int_{x_{4p}+s_{4p}}^{x_{4p}}(2\cdot 2^{8p})^2\,dx=\frac{1}{2^{4p}}\cdot 2^{16p+2}=2^{12p+2}.  
\end{equation}

We define $\tilde u_p(x,y)=f_p(x)$ for $(x,y)$ in $B_{4p-1}$ or $B_{4p}$. Finally, define $u_p=\alpha_p \tilde u_p$, where $\alpha_p:=\frac{1}{\|\tilde u_p\|}$, so that $\|u_p\|=1$. 

We have 
$$\frac{1}{\alpha_p^2}=\|\tilde u_p\|^2\geq \int_{S_{4p}}|\tilde u_p|^2=c_p^2\cdot s_{4p}^2=1.$$

For condition (iii), note that $\tilde u_p$ is constant on the vertical direction and therefore $D_2u_p=0$, so that is not a problem; we focus on $D_1u_p$. This is supported on the closure of $B_{4p-1}\cup B_{4p}$. We have, using the fact that $\alpha_p\leq 1$, \eqref{eqc.1} and \eqref{eqc.2}, 
$$\|D_1u_p\|^2=\alpha_p^2\left(\int_{B_{4p-1}}|D_1\tilde u_p|^2+\int_{B_{4p}}|D_1\tilde u_p|^2\right)$$$$\leq \delta_{4p-1}\int_{x_{4p-1}+s_{4p-1}}^{x_{4p}}|f_p'(x)|^2\,dx+\delta_{4p}\int_{x_{4p}+s_{4p}}^{x_{4p+1}}|f_p'(x)|\,dx
\leq \delta_{4p-1}\cdot 2^{12p+1}+\delta_{4p}\cdot 2^{12p+2}.$$
We pick $\delta_n$ to be very small to make $\|D_2u_p\|^2\rightarrow0$. For example, let $\delta_n=\frac1{2^{10n}}$. 

It remains only to show that $u_p\rightarrow 0$ weakly in $L^2(\Omega)$. Let $g\in L^2(\Omega)$ and let $\epsilon>0$. There exists $N$ large enough so that 
$$\int_{\cup_{p\geq n} (S_p\cup B_p)} |g|^2<\epsilon,$$
for $n\geq N$.

Then, for $p\geq N$, with the Schwarz inequality, 
$$|\ip{u_p}{g}|^2\leq \left(\int _{B_{4p-1}\cup S_{4p}\cup B_{4p}}|u_p||g|\right)^2\leq \|u_p\|^2\cdot\int_{B_{4p-1}\cup S_{4p}\cup B_{4p} }|g|^2<\epsilon.$$
	Therefore $u_p\rightarrow0$ weakly. 

\section{Steen Pedersen: Unbounded spectral sets,  generalized eigenvectors and unitary groups}\label{secpe}

In this section we present Steen Pedersen's improvements of Fuglede's Theorem \ref{th2.1} from \cite{Ped87}. He not only removed the Nikodym restriction, but also generalized the Theorem to connected domains of infinite measure! 

Of course, if $\Omega$ is of infinite measure then the exponential functions $e_\lambda$ are not in $L^2(\Omega)$ and therefore the definition of spectral sets have to be adjusted. Here is Pedersen's version of spectral sets. 

\begin{definition}\label{defp1}
	For a function $f\in L^1(\br^n)$, let $\hat f$ be the classical Fourier transform 
	$$\hat f(\lambda)=\int_{\br^n}f(t) e^{-2\pi i\lambda\cdot t}\,dt,\quad(\lambda\in\br^n).$$

	Let $\Omega$ be a measurable subset of $\br^n$ and let $\mu$ be a regular positive Borel measure on $\br^n$. We will say that $(\Omega,\mu)$ is a {\it spectral pair} if (1) for each $f\in L^1(\Omega)\cap L^2(\Omega)$, the continuous function $\lambda\rightarrow \hat f(\lambda)=({f},{e_\lambda})$ satisfies $\int |\hat f|^2\,d\mu<\infty$, and (2) the map $f\rightarrow \hat f$ of $L^1(\Omega)\cap L^2(\Omega)\subset L^2(\Omega)$ into $L^2(\mu)$ is isometric and has dense range. 
	
	This map, then, extends by continuity to an isometric isomorphism
	$$F:L^2(\Omega)\rightarrow L^2(\mu).$$
	We write $\hat f$ for $Ff$, when $f\in L^2(\Omega)$.
	
	The set $\Omega$ is called {\it spectral} if there is a measure $\mu$ such that $(\Omega,\mu)$ is a spectral pair. When $\Omega$ has finite measure, the two definitions of spectral measure coincide (Corollary \ref{corp4}). 
\end{definition}

Another important contribution of Pedersen was to clarify the connection between spectral sets and a special kind of unitary groups, that act locally as translations (see Definition \ref{defp5}). If the set $\Omega$ is spectral then one can define the unitary group $(U(t))_{t\in\br^n}$ as modulation in the Fourier space, meaning the unitary operators $U(t)$ are diagonalized by the Fourier transform $F$ on $L^2(\Omega)$ (Definition \ref{defp2}). This unitary group acts locally as translations on $L^2(\Omega)$ (Proposition \ref{prp4}). In the classical cases of $\Omega=\br$ and $\Omega=[0,1]$, these unitary groups are usual translations (Example \ref{ex8.1} , Example \ref{ex8.4}). 

There is a one-to-one correspondence between commuting self-adjoint operators and unitary groups given by the Generalized  Stone Theorem \ref{thgs}. In this correspondence, the self-adjoint restrictions $\{H_i\}$ of the partial differential operators $\{D_i\}$ correspond to unitary operators $\{U(t)\}$ with the local translation property in Definition \ref{defp5} (Theorem \ref{thai}). 

Finally, using some improved versions of the Spectral Theorem (Theorem \ref{thcst} and Theorem \ref{thmf}), Pedersen generalizes Fuglede's Theorem, and shows that, for {\it connected} open sets (possibly of infinite measure), the existence of commuting self-adjoint restrictions of the partial differential operators $D_j$ is equivalent to $\Omega$ being spectral. 

\begin{definition}\label{defp2}
	Let $(\Omega,\mu)$ be a spectral pair. Then $F:f\rightarrow \hat f$ is an isometric isomorphism of $L^2(\Omega)$ onto $L^2(\mu)$. We may therefore define a strongly continuous unitary representation $U$ of $\br^n$ on $L^2(\Omega)$, by ``modulation in the frequency space'', meaning multiplication by an exponential function in the Fourier domain, and then conjugating using the Fourier transform $F$. More precisely:
	$$(F(U(t)f))(\lambda)=e^{2\pi it\cdot \lambda}(Ff)(\lambda),\quad(f\in L^2(\Omega), t\in\br^n,\lambda\in\Lambda:=\supp\mu).$$
	
	We say that $U$ is the {\it unitary group} associated to the spectral pair $(\Omega,\mu)$.
	
	Note that $U$ may be rewritten in the form, in the distribution sense:
	\begin{equation}
	\label{eqfe}
	(U(t)f, e_\lambda)=e^{2\pi it\cdot\lambda}(f,e_\lambda),\quad(f\in C_0^\infty(\Omega),\lambda\in\supp{\mu}).
	\end{equation}
	
	In this way, $e_\lambda$ can be considered as a generalized eigenfunction for $U$ corresponding to the eigenvalue $e^{2\pi it\cdot\lambda}$.
	
		By the the Generalized Stone Theorem \ref{thgs}, there exists a unique spectral measure $E$ on $\br^n$ such that 
	$$U(t)=\int_{\br^n}e^{2\pi it\cdot \lambda}\,dE(\lambda),\quad (t\in\br^n).$$
	
	The {\it spectrum} of $U$, denoted $\operatorname*{sp} U$ is by definition the support of the spectral measure $E$, which is the same as the joint spectrum of the infinitesimal generators $(H_1,\dots,H_n)$, see \eqref{eqE} and \eqref{eqprE}. 
	
\end{definition}

\begin{proposition}\label{prp3} \cite[Proposition 1.10]{Ped87}
	If $(U(t))_{t\in\br^n}$ is the unitary group associated to a spectral pair $(\Omega,\mu)$ then the spectrum of $U$ is $\operatorname*{sp}(U)=\operatorname*{supp}{\mu}$. 
\end{proposition}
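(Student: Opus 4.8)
The plan is to show directly that the spectral measure $E$ appearing in the Stone-theorem decomposition $U(t)=\int e^{2\pi i t\cdot\lambda}\,dE(\lambda)$ is carried by $\supp\mu=:\Lambda$, and conversely that no open subset of $\Lambda$ is $E$-null; since $\operatorname{sp}U$ is by definition $\supp E$, these two facts together give $\operatorname{sp}U=\Lambda$. The natural route is to transport everything through the Fourier isometry $F:L^2(\Omega)\to L^2(\mu)$. Under $F$, the operator $U(t)$ becomes multiplication by the function $\lambda\mapsto e^{2\pi i t\cdot\lambda}$ on $L^2(\mu)$, by Definition \ref{defp2}. So I would first observe that the spectral measure of the multiplication-operator group $(M_{e^{2\pi i t\cdot\lambda}})_{t\in\br^n}$ on $L^2(\mu)$ is exactly $\widetilde E(\Delta)=M_{\chi_\Delta}$, multiplication by the indicator of the Borel set $\Delta\subset\br^n$: indeed $\int e^{2\pi i t\cdot\lambda}\,d\widetilde E(\lambda)=M_{e^{2\pi i t\cdot\lambda}}$ holds because for each fixed $g\in L^2(\mu)$ the measure $\Delta\mapsto\langle\widetilde E(\Delta)g,g\rangle=\int_\Delta|g|^2\,d\mu$ integrates $e^{2\pi i t\cdot\lambda}$ to $\langle M_{e^{2\pi i t\cdot\lambda}}g,g\rangle$, and the uniqueness clause of the Generalized Stone Theorem \ref{thgs} then forces $\widetilde E$ to be the spectral measure. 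Conjugating back, $E(\Delta)=F^{-1}\widetilde E(\Delta)F=F^{-1}M_{\chi_\Delta}F$.

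Next I would identify $\supp E$. For any Borel set $\Delta$, $E(\Delta)=0$ iff $M_{\chi_\Delta}=0$ on $L^2(\mu)$ iff $\mu(\Delta)=0$. Hence a point $\lambda_0$ lies outside $\supp E$ iff it has an open neighborhood $V$ with $E(V)=0$, i.e. with $\mu(V)=0$, which is precisely the statement that $\lambda_0\notin\supp\mu$. Therefore $\supp E=\supp\mu=\Lambda$, which is exactly $\operatorname{sp}U=\supp\mu$. The final remark of Definition \ref{defp2} — that $\operatorname{sp}U$ coincides with the joint spectrum $\sigma(H)$ of the infinitesimal generators — is then automatic, since those generators are $H_j=\int\xi_j\,dE$ by construction.

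The argument is essentially a bookkeeping exercise once the model $F U(t) F^{-1}=M_{e^{2\pi i t\cdot\lambda}}$ is in hand, so I do not expect a serious obstacle; the one point that deserves care is the appeal to uniqueness in Stone's theorem, i.e. verifying that the obvious candidate $\widetilde E=M_{\chi_\Delta}$ really is a projection-valued measure (countable additivity in the strong operator topology, $\widetilde E(\br^n)=I$ using $\mu$-essential support, and the weak-integral identity above) so that it must be \emph{the} spectral measure of the group. A secondary subtlety is purely notational: $\mu$ is a regular Borel measure on $\br^n$ that need not be supported on all of $\br^n$, so one should read $L^2(\mu)$ as functions on $\br^n$ vanishing $\mu$-a.e. off $\Lambda$, and then $M_{\chi_\Delta}$ only "sees" $\Delta\cap\Lambda$; this is consistent with, and in fact the reason for, the conclusion $\supp E\subset\Lambda$.
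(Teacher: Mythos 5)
Your argument is correct and follows essentially the same route as the paper: both conjugate $U(t)$ by the Fourier isometry $F$ to the modulation group on $L^2(\mu)$ and identify the spectral measure as $E(\Delta)=F^{-1}M_{\chi_\Delta}F$, so that $E(\Delta)=0$ iff $\mu(\Delta)=0$ and hence $\supp E=\supp\mu$. The only cosmetic difference is that the paper pins $E$ down via uniqueness of Fourier transforms of the scalar measures, obtaining $d\ip{E(\lambda)f}{g}=\hat f(\lambda)\cj{\hat g}(\lambda)\,d\mu(\lambda)$ directly, whereas you verify that $\Delta\mapsto M_{\chi_\Delta}$ is a projection-valued measure and invoke the uniqueness clause of the Generalized Stone Theorem; both are valid.
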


\begin{proof}
 Let $E$ be the spectral measure associated to $E$ as in the Generalized Stone Theorem \ref{thgs}. 
For $f,g\in L^2(\Omega)$, 
	$$\int_{\br^n}e^{2\pi i t\cdot\lambda}\,d\ip{E(\lambda)f}{g}=\ip{U(t)f}{g}=\ip{FU(t)f}{Fg}=\int_{\br^n}e^{2\pi it\cdot\lambda}(f,e_\lambda)(e_\lambda,g)\,d\mu(\lambda).$$
	Hence, 
 $d\ip{E(\lambda)f}{g}=(f,e_\lambda)(e_\lambda,g)\,d\mu(\lambda)=\hat f(\lambda)\cj{\hat g}(\lambda)\,d\mu(\lambda)$. Since the transformation $f\rightarrow \hat f$ is an isometric isomorphism onto $L^2(\mu)$, this means that the support of the spectral measure $E$ is the support of the measure $\mu$, and therefore $\operatorname*{sp} U =\supp{E}=\supp{\mu}$.
	
\end{proof}

In the case of sets $\Omega$ of finite measure, the two Definitions \ref{defsp} and \ref{defp1} of spectral sets coincide:

\begin{corollary}\label{corp4}\cite[Corollary 1.11]{Ped87}
Let $\Omega$ be a measurable subset of $\br^n$, and let $\mu$ be a measure on $\br^n$. Assume that $\Omega$ has finite measure.   $(\Omega,\mu)$ is a spectral pair, iff $\mu(\{\lambda\})=1$ for all $\lambda\in\operatorname*{supp}{\mu}$, and $\{e_\lambda :\lambda\in\Lambda\}$ is an orthogonal basis for $L^2(\Omega)$.
\end{corollary}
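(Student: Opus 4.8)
The plan is to work with the unitary $F\colon L^2(\Omega)\to L^2(\mu)$ supplied by Definition \ref{defp1}, and to use the hypothesis $m(\Omega)<\infty$ in two ways: it gives $L^1(\Omega)\cap L^2(\Omega)=L^2(\Omega)$, so that $F$ is defined on all of $L^2(\Omega)$ and the Fourier transform of any $L^2(\Omega)$-function is a bounded \emph{continuous} function; and it places every exponential $e_\nu$ in $L^2(\Omega)$, with the elementary identity $(Fe_\nu)(\xi)=(e_\nu,e_\xi)=\widehat{1_\Omega}(\xi-\nu)=:g(\xi-\nu)$, where $g$ is continuous and $|g|\le g(0)=m(\Omega)$. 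Write $\Lambda=\supp\mu$. I would prove the two implications separately.

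The direction ``$\Leftarrow$'' is soft. If each point of $\Lambda$ carries the mass appearing in the statement and $\{e_\lambda:\lambda\in\Lambda\}$ is an orthogonal basis of $L^2(\Omega)$, then $\mu$ is purely atomic with atom set $\Lambda$, and Parseval's identity for that orthogonal basis gives $\int|\widehat f|^2\,d\mu=\|f\|^2$ for all $f\in L^2(\Omega)$ — this is conditions (1)--(2) of Definition \ref{defp1}, the atom mass being exactly what makes the two sides match (note $\|e_\lambda\|^2=m(\Omega)$). Surjectivity of $F$ is immediate because $Fe_\lambda$ is a nonzero multiple of $1_{\{\lambda\}}$ and the $1_{\{\lambda\}}$, $\lambda\in\Lambda$, span $L^2(\mu)$.

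For ``$\Rightarrow$'' assume $(\Omega,\mu)$ is a spectral pair. The key step — and the one I expect to be the main obstacle — is to show that $\mu$ is purely atomic. Given a Borel set $\Delta$ with $0<\mu(\Delta)<\infty$, put $\psi_\Delta:=F^{-1}(1_\Delta)\in L^2(\Omega)$, so that $\widehat{\psi_\Delta}=1_\Delta$ $\mu$-almost everywhere on $\Lambda$, while $\widehat{\psi_\Delta}$ is an honest continuous function on $\br^n$. For every $\nu\in\br^n$ one computes
$$\widehat{\psi_\Delta}(\nu)=(\psi_\Delta,e_\nu)=(F\psi_\Delta,Fe_\nu)_{L^2(\mu)}=\int_\Delta g(\nu-\lambda)\,d\mu(\lambda),$$
hence $\|\widehat{\psi_\Delta}\|_\infty\le m(\Omega)\,\mu(\Delta)$ since $|g|\le m(\Omega)$. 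On the other hand $\widehat{\psi_\Delta}$ equals $1$ at $\mu$-almost every point of $\Delta$ and $\mu(\Delta)>0$, so $\|\widehat{\psi_\Delta}\|_\infty\ge1$; therefore $\mu(\Delta)\ge 1/m(\Omega)$. Thus every Borel set of positive finite $\mu$-measure has $\mu$-measure bounded below by a fixed positive constant, which is impossible once $\mu$ has a nonzero continuous part (split it into pieces of arbitrarily small positive measure). So $\mu$ is purely atomic. The points needing care are that $\widehat{\psi_\Delta}$ must be treated as a genuine continuous function rather than an $L^2(\mu)$-class, and that $\widehat{\psi_\Delta}=1_\Delta$ holds only $\mu$-a.e.; this is exactly where finiteness of $m(\Omega)$ is decisive.

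To finish, I would apply the displayed formula with $\Delta=\{\lambda_0\}$ a single atom, getting $\widehat{\psi_{\{\lambda_0\}}}(\nu)=\mu(\{\lambda_0\})\,g(\nu-\lambda_0)=\mu(\{\lambda_0\})\,\widehat{e_{\lambda_0}1_\Omega}(\nu)$, whence $F^{-1}(1_{\{\lambda_0\}})=\mu(\{\lambda_0\})\,e_{\lambda_0}$; comparing $L^2$-norms through $F$ then identifies $\mu(\{\lambda_0\})$ (and shows $g$ vanishes at every nonzero difference of atoms, so $\Lambda$ has no accumulation point and equals its set of atoms). Consequently $\{1_{\{\lambda_0\}}:\lambda_0\in\Lambda\}$ is an orthogonal basis of $L^2(\mu)$ that $F^{-1}$ carries onto the scalar multiples $\{\mu(\{\lambda_0\})e_{\lambda_0}\}$, so $\{e_{\lambda_0}:\lambda_0\in\Lambda\}$ is an orthogonal basis of $L^2(\Omega)$, which closes the equivalence. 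Apart from the atomicity step, everything is bookkeeping with $F$ and the identity $(e_\nu,e_\xi)=\widehat{1_\Omega}(\xi-\nu)$.
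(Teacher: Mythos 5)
Your proof is correct, but it takes a genuinely different route from the paper's. The paper's proof of Corollary \ref{corp4} goes through the unitary group $U$ of Definition \ref{defp2}: from \eqref{eqfe} it deduces $U(t)e_\lambda=e^{2\pi i t\cdot\lambda}e_\lambda$ for $\lambda\in\supp\mu$, reads off orthogonality of $e_\gamma$ and $e_\lambda$ for $\gamma\neq\lambda$ from the clash of eigenvalues, and then computes $\|Fe_\lambda\|_{L^2(\mu)}^2$ to identify the atom masses; the converse is dismissed as simple. You never introduce $U$: you use only the unitarity of $F$ together with the fact that, since $m(\Omega)<\infty$, every $Ff$ is an honest continuous function, and your lower bound $\mu(\Delta)\geq 1/m(\Omega)$ for every Borel set with $0<\mu(\Delta)<\infty$ is a quantitative step the paper does not carry out at all --- the paper's proof shows each point of $\supp\mu$ is an atom but never argues that $\mu$ has no continuous part, nor does it prove completeness of $\{e_\lambda:\lambda\in\Lambda\}$ in $L^2(\Omega)$; you obtain both, the latter by transporting the orthogonal basis $\{1_{\{\lambda\}}\}$ of $L^2(\mu)$ through $F^{-1}$ (using injectivity of the Fourier transform on $L^1(\br^n)$ to identify $F^{-1}1_{\{\lambda\}}$ as a multiple of $e_\lambda$). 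Your argument is thus more self-contained in its inputs and more complete in its conclusions; what the paper's route buys is brevity and the thematically useful picture of the $e_\lambda$ as generalized eigenfunctions of $U$. One caveat, which is really about the statement: your norm comparison yields $\mu(\{\lambda\})=\|e_\lambda\|^{-2}=1/m(\Omega)$ rather than $1$; the paper's own computation has the same normalization slip (one should have $\|Fe_\lambda\|_{L^2(\mu)}^2=m(\Omega)^2\,\mu(\{\lambda\})$, not $m(\Omega)\,\mu(\{\lambda\})$), so the constant $1$ in the corollary is correct only when $m(\Omega)=1$.
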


\begin{proof}
	
	Note first that for $\lambda\in \Lambda=\operatorname*{supp}{\mu}$ and $f\in L^2(\Omega)$, using \eqref{eqfe}, 
	$$\ip{U(t)e_\lambda}{f}=\ip{e_\lambda}{U(-t)f}=e^{2\pi it\cdot \lambda}\ip{e_\lambda}{f},$$ and therefore $U(t)e_\lambda=e^{2\pi it\cdot\lambda}e_\lambda$.

	Then, for fixed $\lambda,\gamma$ in $\Lambda$ and variable $t\in\br^n$, we have 
	$$e^{2\pi it\cdot\gamma}\ip{e_\gamma}{e_\lambda}=\ip{U(t)e_\gamma}{e_\lambda}=\ip{e_\gamma}{U(-t)e_\lambda}=e^{2\pi it\cdot\lambda}\ip{e_\gamma}{e_\lambda}.$$
	
	Hence, if $\gamma\neq\lambda$, then $\ip{e_\gamma}{e_\lambda}=0$. 
	
	Also, this implies that $Fe_\lambda(\gamma)=0$ for $\gamma\neq \lambda$. Also $Fe_\lambda(\lambda)=\ip{e_\lambda}{e_\lambda}=m(\Omega)$. Since $F$ is unitary, we must have 
	$$m(\Omega)=\|e_\lambda\|_{L^2(\Omega)}^2=\|Fe_\lambda\|_{L^2(\mu)}^2=m(\Omega)\cdot\mu(\{\lambda\}).$$
	This implies that $\mu(\{\lambda\})=1$, for all $\lambda\in\operatorname*{supp}{\mu}$.
	
	The converse implication is simple. 
	
\end{proof}

We know that the classical Fourier transform on $\br$, transforms translations into modulations. For a spectral pair $(\Omega,\mu)$, we defined the unitary group $U$ as modulation in the frequency space $L^2(\mu)$. Going back to the time space $L^2(\Omega)$, with the Fourier transform $F$, we will see in Proposition \ref{prp4} that the unitary group $U$ acts, at least locally, as translations. We make this precise in the following definition.
\begin{definition}
	\label{defp5}
	Let $\Omega$ be an open subset of $\br^n$. We say that $\Omega$ has {\it the integrability property} if there exists a strongly continuous unitary representation $U$ of $\br^n$ on $L^2(\Omega)$ which satisfies the following condition: for every $x$ in $\Omega$, for all neighborhoods $V$ of $x$ in $\Omega$ and all $\epsilon>0$ such that $V+t\subset\Omega$ for all $t\in\br^n$ with $\|t\|<\epsilon$, we have 
	$$(U(t)f)(y)=f(y+t),\mbox{ for all $y\in V$, $t\in\br^n$ with $\|t\|<\epsilon$ and }f\in L^2(\Omega).$$

\end{definition}

\begin{proposition}\label{prp6}
	\label{prp4}\cite[Proposition 2.1]{Ped87}
Let $\Omega$ be an open subset of $\br^n$. If $\Omega$ is a spectral set then $\Omega$ has the integrability property. 	
\end{proposition}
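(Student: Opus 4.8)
The plan is to start from the unitary group $U$ associated to the spectral pair $(\Omega,\mu)$ as in Definition \ref{defp2}, and to show directly that it satisfies the local translation condition of Definition \ref{defp5}. The natural route is through the ``distribution form'' of $U$, equation \eqref{eqfe}: for $f\in C_0^\infty(\Omega)$ and $\lambda\in\supp\mu$ one has $(U(t)f,e_\lambda)=e^{2\pi it\cdot\lambda}(f,e_\lambda)$. The point is that $e^{2\pi it\cdot\lambda}(f,e_\lambda)=(\tau_t f,e_\lambda)$, where $\tau_t f(y)=f(y+t)$ is the honest translate, \emph{provided} $\tau_t f$ is still supported in $\Omega$ (so that the pairing makes sense). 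Indeed, $\int_{\br^n} f(y+t)e^{-2\pi i\lambda\cdot y}\,dy = e^{2\pi i\lambda\cdot t}\int f(y)e^{-2\pi i\lambda\cdot y}\,dy$ by a change of variables. Since $F$ is an isometric isomorphism of $L^2(\Omega)$ onto $L^2(\mu)$, two elements of $L^2(\Omega)$ that have the same inner products against all $e_\lambda$, $\lambda\in\supp\mu$, must be equal; hence $U(t)f=\tau_t f$ as elements of $L^2(\Omega)$ whenever $f\in C_0^\infty(\Omega)$ with $\supp f+t\subset\Omega$.

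Next I would upgrade this from test functions with shifted support inside $\Omega$ to the local statement in Definition \ref{defp5}. Fix $x\in\Omega$, a neighborhood $V$ of $x$ with $V+t\subset\Omega$ for all $\|t\|<\epsilon$, and an arbitrary $f\in L^2(\Omega)$. I want $(U(t)f)(y)=f(y+t)$ for a.e.\ $y\in V$ and $\|t\|<\epsilon$. The idea is a localization/cutoff argument: it suffices to check $(U(t)f)\cdot\mathbf 1_W = (\tau_t f)\cdot \mathbf 1_W$ for every open $W$ with $\overline W\subset V$ compact, and for $\|t\|$ small enough that $W+t'\subset\Omega$ for $\|t'\|\le\|t\|$. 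Choose $\varphi\in C_0^\infty(\Omega)$ with $\varphi\equiv 1$ on a neighborhood of $\overline W$ and $\supp\varphi + t'\subset\Omega$ for all $\|t'\|<\epsilon'$ (possible by shrinking $\epsilon'$). Then approximate $f$ in $L^2(\Omega)$ by functions $f_k\in C_0^\infty(\Omega)$; on the set $\supp\varphi$ one has $\varphi f_k\to \varphi f$ in $L^2$, and $U(t)(\varphi f_k)=\tau_t(\varphi f_k)$ by the previous paragraph since $\supp(\varphi f_k)+t\subset\Omega$. Passing to the limit, using continuity of $U(t)$ and of translation on $L^2$ of a fixed compact region, gives $U(t)(\varphi f)=\tau_t(\varphi f)$. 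Finally, because $\varphi\equiv 1$ near $\overline W$ and translation by small $t$ keeps $W$ inside the set where $\varphi\equiv 1$, restricting to $W$ yields $(U(t)f)|_W = \tau_t f|_W$; since $W$ was arbitrary inside $V$ we conclude the integrability property.

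The step I expect to be the main obstacle is the passage between ``$U(t)f=\tau_t f$ globally when the support allows it'' and ``$U(t)f$ agrees with $\tau_t f$ only locally, for arbitrary $L^2$ functions, on the small set $V$''. One has to be careful that multiplying by the cutoff $\varphi$ does not spoil the identity outside $W$ (it need not — we only claim agreement on $W$), and that the translate $\tau_t(\varphi f)$ computed in $L^2(\Omega)$ genuinely restricts to $\tau_t f$ on $W$, which uses $\varphi\equiv 1$ on $W+t'$ for the relevant $t'$. A clean way to package this is to first prove the identity $(U(t)f,g)=(\tau_t f,g)$ for $g\in C_0^\infty(\Omega)$ supported in $W$ (here $(\tau_t f,g)$ makes sense by a change of variables even though $\tau_t f$ is only defined on a translate of $\Omega$), then let $g$ range over a dense family to conclude $U(t)f$ and $\tau_t f$ agree a.e.\ on $W$. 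Everything else — the change of variables, the density of $C_0^\infty(\Omega)$ in $L^2(\Omega)$, strong continuity of $U$ — is routine; the bookkeeping with the two radii $\epsilon$, $\epsilon'$ and the cutoff is where care is needed.
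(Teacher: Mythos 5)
Your first step is correct: for $f\in C_0^\infty(\Omega)$ with $\supp f-t\subset\Omega$ (note the sign --- $f(\cdot+t)$ is supported on $\supp f-t$, not $\supp f+t$), both $U(t)f$ and $f(\cdot+t)$ have $F$-transform $e^{2\pi it\cdot\lambda}\hat f(\lambda)$, so injectivity of $F$ gives $U(t)f=f(\cdot+t)$ globally. The gap is in the localization step. From $U(t)(\varphi f)=\tau_t(\varphi f)$ you conclude $(U(t)f)|_W=(\tau_tf)|_W$, but that requires $U(t)\bigl((1-\varphi)f\bigr)$ to vanish a.e.\ on $W$, and nothing you have proved gives this: $(1-\varphi)f$ is a general $L^2$ function that merely vanishes near $\overline W$, it is not compactly supported, and the identity from your first paragraph does not apply to it. The implication ``$h$ vanishes on a neighborhood of $W$ $\Rightarrow$ $U(t)h$ vanishes on $W$ for small $t$'' is itself a locality property of $U$ of exactly the kind the proposition is asserting, so the cutoff argument is circular (or at least incomplete) at its final step.

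The fix is precisely the duality argument you relegate to your closing remark, and it is in fact the paper's entire proof --- not an optional repackaging. For $g$ continuous with support in $V$ and $\|t\|<\epsilon$, Parseval for $F$ gives
$$\ip{U(t)f}{g}=\int e^{2\pi it\cdot\lambda}\hat f(\lambda)\cj{\hat g(\lambda)}\,d\mu(\lambda)=\int \hat f(\lambda)\,\cj{\widehat{g(\cdot-t)}(\lambda)}\,d\mu(\lambda)=\ip{f}{g(\cdot-t)}=\ip{f(\cdot+t)}{g},$$
the key point being that $g(\cdot-t)$ is supported in $V+t\subset\Omega$, hence is a legitimate element of $L^1(\Omega)\cap L^2(\Omega)$ with $F$-transform $e^{-2\pi it\cdot\lambda}\hat g(\lambda)$; the modulation is moved onto the test function rather than onto $f$. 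Letting $g$ range over a dense family in $L^2(V)$ yields $(U(t)f)(y)=f(y+t)$ for a.e.\ $y\in V$, directly for every $f\in L^1(\Omega)\cap L^2(\Omega)$ (hence every $f\in L^2(\Omega)$ by density and boundedness of $U(t)$), with no cutoff, no approximation of $f$, and no need for the global lemma of your first paragraph. So: promote your final remark to the proof and discard the cutoff argument.
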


\begin{proof}
 Let $\mu$ be a measure on $\br^n$ such that $(\Omega,\mu)$ is a spectral pair, and let $U$ be the associated unitary group. Let $x\in\Omega$. Let $V$ be a neighborhood of $x$ and $\epsilon>0$ as in Definition \ref{defp5}. Now pick a continuous function $\varphi$ with support in $V$ and a vector $t\in\br^n$ with $\|t\|<\epsilon$. We have for $f\in L^1(\Omega)\cap L^2(\Omega)$,
 $$\ip{U(t)f}{\varphi}=\ip{FU(t)f}{F\varphi}=\int e^{2\pi it\cdot\lambda}({f},{e_\lambda})({e_\lambda},{\varphi})\,d\mu(\lambda)
 =\int({f},{e_\lambda})({e_\lambda(\cdot+t)},{\varphi})\,d\mu(\lambda)$$
 $$=\int({f},{e_\lambda})({e_\lambda},{\varphi(\cdot-t)})\,d\mu(\lambda)=\ip{Ff}{F(\varphi(\cdot-t))}=\ip{f}{\varphi(\cdot-t)}=\ip{f(\cdot+t)}{\varphi}.$$
 
 Hence $(U(t)f)(y)=f(y+t)$ for $y\in V$ and $\|t\|<\epsilon$, which means that $\Omega$ has the integrability property.

\end{proof}

Thus, spectral sets have the integrability property. The next important theorem shows that the integrability property is equivalent to the existence of commuting self-adjoint restrictions of the differential operators $D_j$. 

\begin{theorem}\label{thai} \cite[Lemma 1]{Jor82},\cite[Proposition 1.2]{Ped87} Let $\Omega$ be an open set in $\br^n$. The following statements are equivalent:
	\begin{enumerate}
		\item There exists $H=(H_1,\dots,H_n)$ a commuting family of self-adjoint restrictions $H_j$ of $D_j$ on $L^2(\Omega)$. 
		\item The set $\Omega$ has the integrability property. 
	\end{enumerate}
	The correspondence $H\leftrightarrow U(t)$ is given by the Generalized Stone Theorem \ref{thgs} (see also Theorem \ref{thstone0}).
	
\end{theorem}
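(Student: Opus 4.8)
The plan is to establish the two implications separately and then to observe that the constructions in the two directions are mutually inverse, which identifies the correspondence $H\leftrightarrow U$ with the one furnished by the Generalized Stone Theorem \ref{thgs}.

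\textbf{(1)}$\Rightarrow$\textbf{(2).} Given commuting self-adjoint restrictions $H_j\subset D_j$ with joint spectral measure $E$ as in \eqref{eqE}--\eqref{eqprE}, put $U(t)=\int_{\br^n}e^{2\pi i t\cdot\lambda}\,dE(\lambda)$; by Theorem \ref{thgs} this is a strongly continuous unitary representation of $\br^n$ on $L^2(\Omega)$ whose infinitesimal generators are $2\pi iH_1,\dots,2\pi iH_n$. To verify the integrability property, fix $x\in\Omega$, a neighborhood $V$ of $x$ and $\epsilon>0$ with $V+t\subset\Omega$ for $\|t\|<\epsilon$; it suffices to prove $\ip{U(t)f}{\varphi}=\int_\Omega f(y+t)\cj{\varphi(y)}\,dy$ for all $\varphi\in C_0^\infty(V)$ and all $f$ in the dense subspace $\mathscr D(H)=\bigcap_j\mathscr D(H_j)$, since $U(t)$ is bounded and $f\mapsto\int_\Omega f(y+t)\cj{\varphi(y)}\,dy=\int_{V+t}f(z)\cj{\varphi(z-t)}\,dz$ is continuous on $L^2(\Omega)$. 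For such $f$ the vector $w_t:=U(t)f$ lies in $\mathscr D(H)$ for every $t$, the map $t\mapsto w_t$ is differentiable, and $\partial_{t_j}w_t=2\pi iH_jw_t=2\pi iD_jw_t=\partial_{y_j}w_t$, the last two equalities understood in the distribution sense on $\Omega$ and using $H_j\subset D_j$. Thus, as a distribution-valued function of $t$, $w_t$ solves the transport system $\partial_{t_j}w_t=\partial_{y_j}w_t$ $(j=1,\dots,n)$ with $w_0=f$, and on $V$ the function $t\mapsto f(\cdot+t)$ solves the same system with the same initial datum, where one uses that $f$ has first distributional derivatives in $L^2(\Omega)$ (as $H_jf=D_jf\in L^2(\Omega)$) to integrate by parts in $y$. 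A uniqueness argument for first-order transport — pairing $w_{st_0}-f(\cdot+st_0)$ against the moving test function $\varphi_0(\cdot+st_0)$ and differentiating in $s\in[0,1]$, the two contributions cancelling — then forces $U(t)f=f(\cdot+t)$ near $x$ (shrinking $\epsilon$ if necessary and then covering $V$), which is the integrability property.

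\textbf{(2)}$\Rightarrow$\textbf{(1).} Let $U$ be a strongly continuous unitary representation of $\br^n$ on $L^2(\Omega)$ witnessing the integrability property, and let $E$ be its spectral measure, $U(t)=\int e^{2\pi it\cdot\lambda}\,dE(\lambda)$, from Theorem \ref{thgs}. Set $H_j=\int\lambda_j\,dE(\lambda)$; these are self-adjoint, and they commute in the sense of commuting spectral measures since all of them are functions of the single projection-valued measure $E$. To see that $H_j\subset D_j$, fix $f\in\mathscr D(H_j)$; then the difference quotients of $t\mapsto U(t)f$ along the $j$-th coordinate axis converge in $L^2(\Omega)$ to $2\pi iH_jf$. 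On any $V$ as in Definition \ref{defp5} the integrability property rewrites these difference quotients as difference quotients of $t\mapsto f(\cdot+t)$ along the $j$-th axis, and difference quotients of an $L^2$ function converge to its distributional derivative $\partial_{y_j}f$ in $\mathscr D'(V)$. Comparing the two limits on $V$ yields $2\pi iH_jf=\partial_{y_j}f$ in $\mathscr D'(V)$; since $\Omega$ is a union of such neighborhoods, $H_jf=D_jf$ on $\Omega$, i.e. $H_j$ is a restriction of $D_j$. Finally, the two constructions match: forming $U$ from $H$ as above produces a representation whose generators are the $H_j$, and forming $H$ from $U$ produces, by the uniqueness in Theorem \ref{thgs}, the representation $U$ back again; hence $H\leftrightarrow U$ is precisely the Generalized Stone correspondence.

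The step I expect to be the main obstacle is the transport-equation uniqueness in $(1)\Rightarrow(2)$: passing from the infinitesimal identity $\partial_{t_j}(U(t)f)=\partial_{y_j}(U(t)f)$ to the local identity $U(t)f=f(\cdot+t)$ requires handling a distribution-valued first-order system, enough regularity of $f$ to justify the integration by parts in $y$, and some care with the geometry of the neighborhoods $V$ over which the identity propagates (which may force passing to a smaller $\epsilon$ and then reassembling). By comparison, $(2)\Rightarrow(1)$ is essentially routine, the only mildly delicate point being the standard fact that difference quotients of an $L^2$ function converge to its distributional derivative.
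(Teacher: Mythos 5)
Your proof is correct, and its overall architecture matches the paper's: Generalized Stone's Theorem in both directions, a ``differentiate along the segment $s\mapsto st_0$ and show the derivative vanishes'' computation for (1)$\Rightarrow$(2), and a difference-quotient identification of the generator on a small neighborhood for (2)$\Rightarrow$(1). The differences are in execution. For (1)$\Rightarrow$(2) the paper keeps the propagated object smooth: it takes $f\in C_0^\infty(V)$, shows $\frac{d}{dt}U_u(1-t)T_u(t)f=0$ using that $T_u(t)f$ stays in $C_0^\infty(\Omega)$ (so $2\pi iH_iT_u(t)f=\frac{\partial}{\partial x_i}T_u(t)f$ exactly), and only afterwards passes to general $f\in L^2(\Omega)$ by duality. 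You instead take $f\in\mathscr D(H)$ from the outset and move the test function; written out, the quantity $s\mapsto\ip{U(st_0)f}{\varphi(\cdot-(1-s)t_0)}-\ip{f(\cdot+st_0)}{\varphi(\cdot-(1-s)t_0)}$ has derivative zero because the first term's two contributions cancel after an integration by parts (using $H_j\subset D_j$ and $\varphi(\cdot-(1-s)t_0)\in C_0^\infty(V+(1-s)t_0)\subset C_0^\infty(\Omega)$), while the second term is literally constant in $s$ by a change of variables. So the ``transport-equation uniqueness'' you flag as the main obstacle is not actually needed: no distribution-valued uniqueness theory enters, and no shrinking of $\epsilon$ or covering of $V$ is required, since $\|(1-s)t_0\|<\epsilon$ for all $s\in[0,1]$ already guarantees the supports stay in $\Omega$. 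For (2)$\Rightarrow$(1) you prove $H_jf=D_jf$ for every $f\in\mathscr D(H_j)$ directly (restriction of the maximal operator), whereas the paper verifies $2\pi iH_if=\frac{\partial f}{\partial x_i}$ only for $f\in C_0^\infty(\Omega)$ (extension of the minimal operator) and then invokes Remark \ref{rem2.1}; both routes are valid, yours trading that remark for the standard fact that difference quotients of an $L^2$ function converge to its distributional derivative in $\mathscr D'(V)$.
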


\begin{proof}
	
	Assume (i). The unitary group $(U(t))_{t\in\br^n}$ is defined using the spectral measure $E$ for $H$ (Theorem \ref{thspectral}), by 
	$$U(t)=\int e^{2\pi i\lambda\cdot t}\,d E(\lambda),\quad (t\in\br^n).$$
	We need to prove the integrability property. Let $x\in\Omega$ and let $V$ be a neighborhood of $x$ and $\epsilon>0$ such that $t+V\subset \Omega$ and for all $t\in\br^n$ with $\|t\|<\epsilon$. 
	
	Define $(T(t))_{t\in\br^n}$ to be the usual unitary group of translations on $\br^n$, $(T(t)f)(x)=f(x+t)$, for $x,t\in\br^n$, and $f$ some function on $\br^n$.

	Let $f$ be a function in $C_0^\infty(V)$. We want to show that $U(u)f=T(u)f$ if $\|u\|<\epsilon$.

	Note first that, using Theorem \ref{thstone0}(b),  $\delta_i$ being the canonical vectors in $\br^n$,
	$$\frac{\partial}{\partial x_i}U(t)f=\lim_{s\rightarrow 0}\frac{1}{s}(U(t+s\delta_i)f-U(t)f)=U(t)\lim_{s\rightarrow 0}\frac{1}{s}(U(s\delta_i)f-f)=2\pi iU(t)H_if.$$
	Commuting $U(s\delta_i)$ and $U(t)$ we obtain:
	\begin{equation}\label{eqai1}
		\frac{\partial}{\partial x_i}U(t)f=2\pi iU(t)H_if=2\pi iH_iU(t)f.
	\end{equation}

	Define
	$$U_u(t)=U(tu),\quad T_u(t)=T(tu),\quad\mbox{ for }t\in[0,1].$$
	We have 
	$$\frac{d}{dt}U_u(t)f=\sum_{i=1}^n\frac{\partial}{\partial x_i}U(tu)f\cdot u_i=\sum_{i=1}^n2\pi iu_i U(tu)H_if.$$
	Similarly,
	$$\frac{d}{dt}T_u(t)f=\sum_{i=1}^n\frac{\partial}{\partial x_i}T(tu)f\cdot u_i=\sum_{i=1}^nu_i \frac{\partial}{\partial x_i}T(tu)f.$$
	
	Then $$\frac{d}{dt}U_u(1-t)T_u(t)f=\frac{d}{dt}U_u(1-t)T_u(t)f+U_u(1-t)\frac{d}{dt}T_u(t)f$$
	$$=-\sum_{i=1}^nu_iU_u(1-t)2\pi iH_iT_u(t)f+U_u(1-t)\sum_{i=1}^nu_i\frac{\partial}{\partial x_i}T_u(t)f$$
	$$=U_u(1-t)\left(\sum_{i=1}^nu_i\left(-2\pi iH_i+\frac{\partial}{\partial x_i}\right)\right)T_u(t)f.$$
	But, $T_u(t)f$ is in $C_0^\infty(\Omega)$ since $f\in C_0^\infty(V)$, so $2\pi iH_i T_u(t)f=\frac{\partial}{\partial x_i}T_u(t)f$. Therefore, we obtain that 
	$$\frac{d}{dt}U_u(1-t)T_u(t)f=0.$$
	With the Fundamental Theorem of Calculus we obtain 
	$$0=\int_0^1\frac{d}{dt}U_u(1-t)T_u(t)f\,dt=U_u(0)T_u(1)f-U_u(1)T_t(0)f=T(u)f-U(u)f,$$
	which means that $U(u)f=T(u)f$ when $f$ is in $C_0^\infty(V)$.

	We prove now that for any $f\in L^2(\Omega)$, $(U(u)f)(x)=f(x+u)$ for $x\in V$ and $\|u\|<\epsilon$. 
	
	Let $g\in C_0^\infty(V)$. By the previous argument, $(U(-u)g)(y)=g(y-u)$, for $y\in\Omega$. We have 
	$$\int_V(U(u)f)(x)\cj g(x)\,dx=\int_\Omega (U(u)f)(x)\cj g(x)\,dx=\ip{U(u)f}{g}=\ip{f}{U(-u)g}=\int_{\Omega} f(y)\cj g(y-u)\,dy$$
	$$=\int_{V+u}f(y)\cj g(y-u)\,dy=\int_Vf(x+u)\cj g(x)\,dx.$$
	Since $C_0^\infty(V)$ is dense in $L^2(V)$ it follows that $(U(u)f)(x)=f(x+u)$ for a.e. $x\in V$. This proves (ii).

	Assume now (ii). We have a unitary group $U(t)$ and the integrability property. By the Generalized Stone Theorem \ref{thgs}, and Theorem \ref{thstone}, there are commuting self-adjoint operators $(H_1,\dots, H_n)$ which are the infinitesimal generators of the groups $(U(t\delta_i))_{t\in\br}$. We will prove that $H_i$ is a self-adjoint extension of $\frac{1}{2\pi i}\frac{\partial}{\partial x_i}$. 
	
	Let $f\in C_0^\infty(\Omega)$. Let $x\in\Omega$. By the integrability property, and since $\Omega$ is open, there exists $\epsilon>0$ and a neighborhood $V$ of $x$ such that $(U(t)f)(y)=f(y+t)$ for all $y\in V$ and $t\in \br^n$, with $\|t\|<\epsilon$. Then, with Theorem \ref{thstone0}(b), for any measurable subset $F$ of $V$:
	$$\ip{2\pi iH_if}{\chi_F}=\lim_{t\rightarrow0}\frac{1}{t}\ip{U(t\delta_i)f-f}{\chi_F}=\lim_{t\rightarrow0}\frac1t\int_F((U(t\delta_i)f)(x)-f(x))\,dx$$$$=\lim_{t\rightarrow0}\int_F\frac1t(f(x+t)-f(x))\,dx=\int_F\frac{\partial f}{\partial x_i}(x)\,dx=\ip{\frac{\partial f}{\partial x_i}}{\chi_F}.$$
	
	Since the characteristic functions of the type $\chi_F$ above span $L^2(\Omega)$, it follows that $2\pi i H_if=\frac{\partial f}{\partial x_i}$, which means that $H_i$ is indeed a restriction of $D_i$, and we obtain (i) (see also Remark \ref{rem2.1}).
\end{proof}

\begin{remark}
	The integrability condition can be improved in the connected components of $\Omega$. If $x$ and $x'$ are two points in $\Omega$, then we can step from $x$ to $x'$ in a succession of small steps $x_i\rightarrow x_{i+1}=x_i+t_i$ as in the Definition \ref{defp5}; more precisely, one can find a sequence of points $x=x_1,x_2,\dots,x_p=x'$ in $\Omega$, neighborhoods $V_i$ of $x_i$ and numbers $\epsilon_i>0$ such that $\|x_{i+1}-x_i\|<\epsilon_i$ and  $t_i+V_i\subset\Omega$ for all $\|t_i\|<\epsilon_i$, $i=1,\dots, p-1$. Moreover, we can assume $(x_{i+1}-x_i)+V_i\subset V_{i+1}$ for all $i=1,\dots,p-1$.

	Then, for $y\in V_1$ and for some continuous bounded function $f$ on $\Omega$, we have, using the integrability property,
	$$U(x-x')(y)=U(x_2-x_1)U(x_3-x_2)\dots U(x_p-x_{p-1})f(y)=U(x_3-x_2)\dots U(x_p-x_{p-1})f(y+x_2-x_1)$$
		$$=\dots=U(x_p-x_{p-1})f(y+x_{p-1}-x_1)=f(y+x_p-x_1)=f(y+x'-x).$$ 
	Thus, we have the following:
	\end{remark}

\begin{proposition}\label{prp7}
	If the open set $\Omega$ has the integrability property and $x,x+t$ belong to the same connected component of $\Omega$ then 
	\begin{equation}
		\label{eqp5.0}
			(U(t)f)(x))=f(x+t)\mbox{ for any bounded continuous function $f$ on $\Omega$.}
	\end{equation}
\end{proposition}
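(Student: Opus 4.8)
\emph{Approach.} I would follow the blueprint in the Remark immediately preceding the statement: express $U(t)$ as a product of ``infinitesimal'' translations, each one controlled by the integrability property (Definition~\ref{defp5}), and then recover the value at $x$ by iterating the local translation identity along a chain of points joining $x$ to $x+t$ inside their common connected component. Nothing here is deep; the task is to pin down the chain and then do the bookkeeping.

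\emph{Step 1 (chain construction).} Let $\Omega_0$ be the connected component of $\Omega$ containing both $x$ and $x+t$. An open connected subset of $\br^n$ is path connected, so pick a continuous path $\gamma\colon[0,1]\to\Omega_0$ with $\gamma(0)=x$, $\gamma(1)=x+t$. Its image is compact, so $2\rho:=\dist(\gamma([0,1]),\br^n\setminus\Omega)>0$ (finite if $\Omega\ne\br^n$, and one may then take $\rho$ finite large in the remaining case). Choose a partition $0=\tau_1<\cdots<\tau_p=1$ fine enough that the points $x_i:=\gamma(\tau_i)$ satisfy $\|x_{i+1}-x_i\|<\rho/2$, and set $V_i:=B_{\rho/2}(x_i)$ and $\epsilon_i:=\rho/2$. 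With this constant-radius choice one has, simultaneously, $V_i\subset\Omega$, $V_i+\sigma\subset\Omega$ whenever $\|\sigma\|<\epsilon_i$ (since $\dist(x_i,\br^n\setminus\Omega)\geq 2\rho$), and $(x_{i+1}-x_i)+V_i=B_{\rho/2}(x_{i+1})=V_{i+1}$. Thus at every link both the hypothesis of Definition~\ref{defp5} and the forward nesting used in the Remark hold.

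\emph{Step 2 (factor and iterate).} Put $s_i:=x_{i+1}-x_i$, so that $s_1+\cdots+s_{p-1}=x_p-x_1=t$ and, by the group law, $U(t)=U(s_1)U(s_2)\cdots U(s_{p-1})$. Treat first $f\in L^2(\Omega)$ (automatic if $m(\Omega)<\infty$). Applying Definition~\ref{defp5} at $V_1$ to the $L^2$-function $U(s_2)\cdots U(s_{p-1})f$ gives $(U(t)f)(y)=\bigl(U(s_2)\cdots U(s_{p-1})f\bigr)(y+s_1)$ for a.e.\ $y\in V_1$; since $y+s_1\in V_2$ by the nesting, apply Definition~\ref{defp5} again at $V_2$, and so on. After $p-1$ steps one obtains $(U(t)f)(y)=f(y+t)$ for a.e.\ $y\in V_1$, and each intermediate translate $V_1+s_1+\cdots+s_k$ equals $V_{k+1}\subset\Omega$, so the right-hand side makes sense and is continuous on $V_1$. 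Hence $U(t)f$ has a continuous representative on $V_1$ agreeing with $y\mapsto f(y+t)$; evaluating at $y=x_1=x$ gives $(U(t)f)(x)=f(x+t)$. For a bounded continuous $f$ not in $L^2$ (possible only when $m(\Omega)=\infty$), I would first give $U(t)f$ meaning by the cutoff $\psi f$ with $\psi\in C_0^\infty(\Omega)$, $\psi\equiv 1$ on a bounded neighborhood of $\overline{\gamma([0,1])}$, and use that the identity of Definition~\ref{defp5}, being a statement about local values near the chain, is unchanged by replacing $f$ with $\psi f$ there.

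\emph{Main obstacle.} The argument is essentially routine once the chain is in hand, and the constant-radius choice in Step~1 makes the two simultaneous requirements at each link transparent. The genuinely delicate points are: (a) passing from the almost-everywhere identities produced by Definition~\ref{defp5} to a genuine pointwise statement at $x$, which is precisely why $f$ is taken continuous and why one tracks that each intermediate translate of $V_1$ stays inside $\Omega$; and (b) in the infinite-measure case, cleanly justifying that the cutoff $\psi f$ does not affect the value read off at $x$ (and, more basically, what $U(t)f$ should even mean for $f\notin L^2$). Point (b) is the only step I expect to require genuine care to write out; everything else is the chaining bookkeeping already indicated in the Remark.
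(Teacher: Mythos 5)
Your proof is correct and follows essentially the same route as the paper, whose entire argument for this proposition is the chaining construction sketched in the preceding Remark: factor $U(t)$ along a chain of small steps joining $x$ to $x+t$ and iterate the local translation identity of Definition~\ref{defp5}. Your additional care (the uniform radius via compactness of the path, the passage from a.e.\ identities to the pointwise value at $x$ via continuity, and the cutoff giving meaning to $U(t)f$ when $f\notin L^2(\Omega)$) only fills in details the paper leaves implicit.
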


Finally, together with Theorem \ref{thai}, we have Pedersen's improvement of Fuglede's Theorem \ref{th2.1}, which shows the equivalence, for connected open sets, of the three categories: (1) spectral (possibly unbounded) sets $\Omega$, (2) commuting self-adjoint restrictions $\{H_i\}$ of the partial differential operators $\{D_i\}$, and (3) unitary groups $\{U(t)\}$ that act locally as translations. 
\begin{theorem}\label{thp5}\cite[Theorem 2.2]{Ped87}
	Let $\Omega$ be an open and connected subset of $\br^n$. Then $\Omega$ has the integrability property if and only if $\Omega$ is a spectral set. 
\end{theorem}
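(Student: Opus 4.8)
The plan is to prove the two implications separately. The implication ``spectral $\Rightarrow$ integrability property'' is already Proposition~\ref{prp6}, so nothing further is needed in that direction. The content of the theorem is the converse: assuming $\Omega$ is open, connected, and has the integrability property, one must produce a regular positive Borel measure $\mu$ on $\br^n$ making $(\Omega,\mu)$ a spectral pair in the sense of Definition~\ref{defp1}.

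First I would feed the integrability property into Theorem~\ref{thai} to obtain a commuting family $H=(H_1,\dots,H_n)$ of self-adjoint restrictions $H_j$ of $D_j$ on $L^2(\Omega)$, with joint spectral measure $E$ on $\br^n$ and associated unitary group $U(t)=\int e^{2\pi i t\cdot\lambda}\,dE(\lambda)$; by Proposition~\ref{prp7}, on the connected set $\Omega$ this $U$ acts as honest translation on bounded continuous functions. Next I would apply the refined spectral theorem (Theorem~\ref{thcst}, Theorem~\ref{thmf}) to $H$ to get a scalar spectral measure $\nu$ on $\sigma(H)=\operatorname{sp}U$ (a measure of maximal spectral type) together with a direct-integral decomposition $L^2(\Omega)\cong\int^{\oplus}\H_\lambda\,d\nu(\lambda)$ in which each $H_j$ is multiplication by the coordinate $\lambda_j$.

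The heart of the argument is to show the multiplicity is one, i.e.\ $\dim\H_\lambda=1$ for $\nu$-a.e.\ $\lambda$, and to identify the fiber map concretely. The mechanism is that a generalized eigenvector of $U$ at $\lambda$ unravels, via the direct integral and \eqref{eqfe}, to a distribution $\phi$ on $\Omega$ with $U(t)\phi=e^{2\pi i t\cdot\lambda}\phi$; since each $H_j$ is a restriction of $D_j$, such a $\phi$ satisfies $D_j\phi=\lambda_j\phi$ in the distribution sense, hence $\nabla(\phi\,e_{-\lambda})=0$, and because $\Omega$ is connected this forces $\phi\in\bc e_\lambda$ --- exactly the phenomenon behind Theorem~\ref{th2.1}(a), now read through the generalized-eigenfunction formalism of Definition~\ref{defp2}. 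With the fibers pinned down, I would normalize $\nu$ to a measure $\mu$ so that the direct-integral unitary becomes precisely $f\mapsto\hat f$ with $\hat f(\lambda)=(f,e_\lambda)$ for $f\in L^1(\Omega)\cap L^2(\Omega)$; the isometry and dense-range conditions of Definition~\ref{defp1} then just restate the spectral theorem, so $(\Omega,\mu)$ is a spectral pair and $\Omega$ is spectral.

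I expect the main obstacle to be the rigorous bridge between the abstract direct-integral decomposition and the concrete pointwise identity $\hat f(\lambda)=(f,e_\lambda)$: one must check the measurability of $\lambda\mapsto e_\lambda$ as a section of the bundle of generalized eigenvectors, verify that the eigenfunction relation \eqref{eqfe} propagates across all of the connected $\Omega$ (this is where Proposition~\ref{prp7} and connectedness are essential --- on a disconnected $\Omega$ the multiplicity can exceed one, cf.\ Remark~\ref{rem2.3}), and control the almost-everywhere normalization so that $\mu$ is a genuine Borel measure rather than merely a measure class. The finite-measure situation is the guiding model and sanity check: there $\nu$ is purely atomic, the normalization gives $\mu(\{\lambda\})=1$, and the assertion collapses to Corollary~\ref{corp4} together with Theorem~\ref{th2.1}.
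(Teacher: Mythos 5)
Your proposal is correct and follows essentially the same route as the paper's proof: Proposition~\ref{prp6} for one direction, then Theorem~\ref{thai} to get the commuting self-adjoint restrictions, the Complete Spectral Theorem~\ref{thcst} together with the Fundamental Theorem~\ref{thmf} (via the Hilbert--Schmidt Sobolev embeddings of Theorem~\ref{tha2}) to realize the fibers by distributions $e_k(\lambda)$, the ODE argument $\nabla(e_{-\lambda}e_k(\lambda))=0$ plus connectedness to force $e_k(\lambda)=c_k e_\lambda$ and multiplicity one, and finally the renormalization $d\mu=|c_\lambda|^2\,d\sigma$ to produce the spectral pair. The obstacles you flag (measurability of $\lambda\mapsto c_\lambda$ and the bridge from the abstract direct integral to the pointwise identity $\hat f(\lambda)=(f,e_\lambda)$) are exactly the points the paper handles with Theorem~\ref{thmf}.
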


\begin{proof}
	 By Proposition \ref{prp4}, we have to prove only the direct implication. We assume that $\Omega$ has the integrability property; with Theorem \ref{thai}, there exist commuting self-adjoint restrictions $H_j$ of the differential operators $D_j$. First, we use the Complete Spectral Theorem \ref{thcst} to diagonalize the partial differential operators $H_j$. There exists a regular Borel measure $\sigma$ on $\br^n$ and a measurable field of Hilbert spaces $\hat{\mathfrak{H}}$, and an isometric isomorphism  $\mathcal F:L^2(\Omega)\rightarrow \hat{\mathfrak{H}}=\int\hat{\mathfrak{H}}(\lambda)\,d\sigma(\lambda)$, such that 
	\begin{equation}\label{eqp5.1}
		\mathcal F(H_jf)(\lambda)=\lambda_j(\mathcal Ff)(\lambda),
	\end{equation}
	for $\lambda\in\Lambda=\operatorname*{supp}{\sigma}$, and $f$ in the domain $\mathscr D(H_j)$ of $H_j$, $(\lambda=(\lambda_1,\dots,\lambda_n))$. 
	
	Also, there exist measurable vector fields $\{y_k\}$ so that $\{y_k(\lambda): k=1,\dots,\dim\hat{\mathfrak{H}}(\lambda)\}$ is an orthonormal basis for $\hat{\mathfrak{H}}(\lambda)$, for all $\lambda\in\Lambda$, $y_k(\lambda)=0$ for $k>\dim\hat{\mathfrak{H}}(\lambda)$.

	Now, here comes one of the main points in Pedersen's proof. Take a sequence $\{\Omega_j\}$ of bounded open subsets of $\Omega$ with $\cj\Omega_j\subset\Omega_{j+1}$, $j=1,2,\dots$, and $\cup_j \Omega_j=\Omega$. The inclusion of the pre-Hilbert $C_0^\infty(\Omega_j)$ with the inner product from Sobolev space $H^m(\Omega_j)$ (see the Appendix), with $m$ large enough, into $L^2(\Omega)$, is a Hilbert-Schmidt operator, by Theorem \ref{tha2}. The space $\Phi=C_0^\infty(\Omega)$ is an inductive limit of the spaces $C_0^\infty(\Omega_j)$, and therefore we can use the Fundamental Theorem \ref{thmf}! It tells us that, for $\sigma$-almost every $\lambda$, there is a distribution $e_k(\lambda)$ on $\Omega$ such that 
	\begin{equation}
		\label{eqp5.3}
		(\mathcal F\varphi)_k(\lambda):=\ip{\mathcal F\varphi(\lambda)}{y_k(\lambda)}=(\varphi,e_k(\lambda)),\quad(\varphi\in C_0^\infty(\Omega),k=1,\dots,\dim\hat{\mathfrak{H}}(\lambda)).
	\end{equation}
	
	We have, for $\varphi\in C_0^\infty(\Omega)$,
	$$(\mathcal F(H_j\varphi))_k(\lambda)=(H_j\varphi,e_k(\lambda))=(\varphi,\frac{1}{2\pi i}\frac{\partial e_k(\lambda)}{\partial x_j}).$$
	On, the other hand, from \eqref{eqp5.1},
	$$(\mathcal F(H_j\varphi))_k(\lambda)=\lambda_j\mathcal F(\varphi)_k(\lambda)=(\varphi,\lambda_je_k(\lambda)).$$
	This means that $\frac1{2\pi i}\frac{\partial e_k(\lambda)}{\partial x_j}=\lambda_j e_k(\lambda)$, or, equivalently $\frac{\partial}{\partial x_j}(e_{-\lambda}e_k(\lambda))=0$, for $j=1,\dots,n$. This system of differential equations has the solution (since $\Omega$ is connected)
	$$e_k(\lambda)=c_ke_\lambda,\quad (k=1,\dots,\dim\hat{\mathfrak{H}}(\lambda)),$$
	where $c_k$ are constants, that depend on $\lambda$. 
	
	The next step is to prove that the dimension of the space $\hat{\mathfrak{H}}(\lambda)$ is 1, for $\sigma$-a.e. $\lambda$. Let $\Lambda_k:=\{\lambda : \dim\hat{\mathfrak{H}}(\lambda)\geq k\}$. We prove that $\Lambda_2$ is a set of $\sigma$-measure zero. Assume that on the contrary $\sigma(\Lambda_2)\neq 0$, then $y_k\neq 0$ for $k=1,2$. Further, for $\varphi\in C_0^\infty(\Omega)$, 
	$$\ip{\mathcal F\varphi}{y_k}=\int \ip{(\mathcal F\varphi)(\lambda)}{y_k(\lambda)}\,d\sigma(\lambda)=\int (\mathcal F\varphi)_k(\lambda)\,d\sigma(\lambda)$$$$=\int(\varphi,e_k(\lambda))\,d\sigma(\lambda)=\int(\varphi,e_\lambda)\cj c_k\,d\sigma(\lambda).$$
	Hence $c_k$ cannot be identically zero, for $k=1,2$ (because $\mathcal F$ is an isomorphism, and therefore $\mathcal F(C_0^\infty(\Omega))$ is dense in $\hat{\mathfrak H}$), and 
	$$\ip{\mathcal F\varphi}{c_2y_1}=\ip{\mathcal F\varphi}{c_1y_2}.$$
	This implies that $c_2y_1=c_1y_2$ which is impossible, since $y_1(\lambda)$ is orthogonal to $y_2(\lambda)$ on the set $\Lambda_2$ of non-zero measure. The contradiction shows that $\Lambda_2$ has to be a null set, and therefore the multiplicity $\dim\hat{\mathfrak{H}}(\lambda)$ has to be 1 for $\sigma$-almost every $\lambda$. 
	
	Further, we obtain that $e_1(\lambda)=c_\lambda e_\lambda$ and, for $\varphi\in C_0^\infty(\Omega)$, 
	$$(\mathcal F\varphi)(\lambda)=(\mathcal F\varphi)_1(\lambda)=(\varphi,c_\lambda e_\lambda)=\cj{c_\lambda}\hat\varphi(\lambda),\mbox{ for $\sigma$-a.e. $\lambda$}.$$
	This implies that the map $\lambda\rightarrow c_\lambda$ is measurable. Also, if we define the measure $\mu$ by $d\mu(\lambda)=|c_\lambda|^2\,d\sigma(\lambda)$ we have
	
	$$\int|\hat\varphi(\lambda)|^2\,d\mu(\lambda)=\int |c_\lambda|^2|\hat\varphi(\lambda)|^2\,d\sigma(\lambda)=\int |(\mathcal F\varphi)(\lambda)|^2\,d\sigma(\lambda)=\int_\Omega|\varphi(x)|^2\,dx,$$
	since $\mathcal F$ is an isometric isomorphism. Therefore $(\Omega,\mu)$ is a spectral pair.

\end{proof}

\section{Unions of intervals}\label{secun}

More can be said in the case in one dimension, when $\Omega$ is a finite union of intervals; the results in this section can be found in \cite{DJ23,DJ15b}.

Let $\Omega$ be a finite union of intervals in $\br$:

$$\Omega=\bigcup_{i=1}^n(\alpha_i,\beta_i),\mbox{ where }-\infty<\alpha_1<\beta_1<\alpha_2<\beta_2<\dots<\alpha_n<\beta_n<\infty.$$

In this case, the domain of the maximal operator $D$, which is 
$$\mathscr D=\{f\in L^2(\Omega) : \mbox{ The weak/distributional derivative $f'$ is in $L^2(\Omega)$}\}$$
can also be described as 
$$\mathscr D=\{f\in L^2(\Omega): f \mbox{ absolutely continuous on each interval and }f'\in L^2(\Omega)\}.$$
For an absolutely continuous function $f\in\mathscr D$, the side limits $f(\alpha_i+)$ and $f(\beta_i-)$ are well defined, and we denote them by $f(\alpha_i)$ and $f(\beta_i)$, respectively, $i=1,\dots,n$. 
We also use the notation 
$f(\vec\alpha)=(f(\alpha_1),\dots,f(\alpha_n))$, and similarly for $f(\vec\beta)$. 

For $\vec z=(z_1,z_2,\dots,z_n)\in\bc^n$ denote by $E(\vec z)$, the $n\times n$ diagonal matrix with entries $e^{2\pi iz_1}$, $e^{2\pi iz_2}$, $\dots,e^{2\pi iz_n}$.

First of all, the self-adjoint restrictions $H$ of the differential operator $D$ are specified by a certain $n\times n$ ``boundary'' matrix $B$ which relates the boundary values of the functions in the domain by $Bf(\vec\alpha)=f(\vec\beta)$.

\begin{theorem}\label{th7.1}
	A self-adjoint restrictions $H$ of the maximal operator $D$ on $\Omega$ is uniquely determined by an $n\times n$ unitary matrix $B$ (which we call {\it the boundary matrix associated to $H$}), through the condition 
	$$\mathscr D(H)=\{f\in\mathscr D : Bf(\vec\alpha)=f(\vec\beta)\}=:\mathscr{D}_B.$$
	Conversely, any unitary $n\times n$ matrix $B$ determines a self-adjoint restriction $H$ of $D$ with domain $\mathscr D(H)$ as above.

	The spectral measure of $H$ is atomic, supported on the spectrum  
	$$\sigma(H)=\left\{\lambda\in\bc : \det(I-E(\lambda\vec\beta)^{-1}BE(\lambda\vec\alpha))=0\right\}\subseteq\br$$
	which is a discrete unbounded set. For an eigenvalue $\lambda\in\sigma(A)$, the eigenspace has dimension at most $n$, and it consists of functions of the form 
	
	\begin{equation}\label{eq7.1.1}
	f(x)= e^{2\pi  i \lambda x} \sum_{i=1}^n c_i\chi_{(\alpha_i,\beta_i)}(x),\mbox{ where }c=(c_i)_{i=1}^n\in\bc^n\mbox{, and }BE(\lambda\vec\alpha)c=E(\lambda\vec\beta)c.
	\end{equation}
	
\end{theorem}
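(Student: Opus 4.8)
The plan is to characterize the self-adjoint restrictions of $D$ via the classical von Neumann / boundary-form analysis in one dimension, then solve the resulting eigenvalue ODE explicitly. First I would compute, for $f,g\in\mathscr D$, the boundary form
\[
\ip{Df}{g}-\ip{f}{Dg}=\frac{1}{2\pi i}\sum_{i=1}^n\bigl(f(\beta_i)\cj{g(\beta_i)}-f(\alpha_i)\cj{g(\alpha_i)}\bigr),
\]
which is just integration by parts on each interval using absolute continuity. A restriction $H$ of $D$ is symmetric exactly when this form vanishes for all $f,g$ in $\mathscr D(H)$, and self-adjoint when $\mathscr D(H)$ is a \emph{maximal} such subspace. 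Rewriting the form as $\langle f(\vec\beta),g(\vec\beta)\rangle_{\bc^n}-\langle f(\vec\alpha),g(\vec\alpha)\rangle_{\bc^n}$, I would argue that a subspace of $\mathscr D$ on which it vanishes maximally is precisely the graph of a linear isometry between the ``$\vec\alpha$-data'' and the ``$\vec\beta$-data''; since the evaluation map $f\mapsto(f(\vec\alpha),f(\vec\beta))$ sends $\mathscr D$ onto all of $\bc^n\times\bc^n$ (one can prescribe boundary values freely, e.g.\ with smooth bump functions on each interval), maximality forces the isometry to be defined on all of $\bc^n$, i.e.\ given by a unitary matrix $B$ with $f(\vec\beta)=Bf(\vec\alpha)$. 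This gives the bijection $H\leftrightarrow B$ and the description $\mathscr D(H)=\mathscr D_B$; conversely any unitary $B$ yields $\mathscr D_B$ on which the form vanishes and which is maximal by a dimension count.

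Next I would analyze the spectrum. For $\lambda\in\bc$, solving $Df=\lambda f$, i.e.\ $f'=2\pi i\lambda f$, on each interval gives $f(x)=c_i e^{2\pi i\lambda x}$ on $(\alpha_i,\beta_i)$, so $f(\vec\alpha)=E(\lambda\vec\alpha)c$ and $f(\vec\beta)=E(\lambda\vec\beta)c$ with $c=(c_i)\in\bc^n$. The boundary condition $Bf(\vec\alpha)=f(\vec\beta)$ becomes $BE(\lambda\vec\alpha)c=E(\lambda\vec\beta)c$, equivalently $\bigl(I-E(\lambda\vec\beta)^{-1}BE(\lambda\vec\alpha)\bigr)c=0$, which has a nonzero solution iff the stated determinant vanishes. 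This determinant is an entire function of $\lambda$ (an exponential polynomial), not identically zero — one can check, say, the leading exponential term or evaluate growth along the imaginary axis — so its zero set is discrete; self-adjointness of $H$ forces $\sigma(H)\subseteq\br$. Since $H$ is self-adjoint with (as one checks) compact resolvent, or alternatively since on each bounded-below-but-the-domain-is-a-finite-measure-set the spectrum is purely discrete, the spectral measure is atomic and $\sigma(H)=\sigma_p(H)$; unboundedness of $\sigma(H)$ follows because the exponential-polynomial determinant has infinitely many real zeros (it cannot be, e.g., eventually of one sign). The eigenspace at $\lambda$ is then exactly the $c$'s in the kernel above, of dimension at most $n$, giving \eqref{eq7.1.1}.

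The main obstacle I expect is the \emph{self-adjointness} half of the first part — proving that $\mathscr D_B$ is not merely symmetric but equals $\mathscr D(H^*)$, i.e.\ that it is maximal symmetric in the right way. The clean route is deficiency indices: the minimal operator $D_{\min}$ (closure of $D$ on $\oplus_i C_0^\infty(\alpha_i,\beta_i)$) has $\mathscr D(D_{\max})/\mathscr D(D_{\min})\cong\bc^n\oplus\bc^n$ via the boundary map, deficiency indices $(n,n)$, and von Neumann's theory says self-adjoint extensions correspond to unitaries $\bc^n\to\bc^n$; I would then match that abstract parametrization with the concrete matrix $B$ by identifying the boundary form above with the standard symplectic/sesquilinear pairing on $\bc^n\oplus\bc^n$ and checking that Lagrangian (maximal isotropic) subspaces of the appropriate signature are exactly graphs of unitaries $f(\vec\beta)=Bf(\vec\alpha)$. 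The secondary technical point is confirming that the characteristic determinant is a nonzero entire function with infinitely many (necessarily real) zeros; this is a short computation with exponential polynomials but deserves a careful line. The rest — the ODE solution, the eigenspace dimension bound, atomicity — is routine once the boundary-matrix parametrization is in hand.
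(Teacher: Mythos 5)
The paper states Theorem \ref{th7.1} without proof, deferring to the cited references, so there is no in-paper argument to compare against; your proposal is the standard and correct route (integration by parts to get the boundary form, identification of maximal isotropic subspaces of the signature-$(n,n)$ form on $\bc^n\oplus\bc^n$ with graphs of unitaries, matching deficiency indices $(n,n)$, then the explicit ODE solution for the eigenvalue problem), which is essentially how this is done in the references. The only remark worth making is that the unboundedness and atomicity claims are most cleanly dispatched by observing that $\mathscr D(H)\subset H^1(\Omega)$ embeds compactly in $L^2(\Omega)$ (Rellich on a finite union of bounded intervals), so $H$ has compact resolvent and hence a purely discrete, necessarily unbounded real spectrum with finite multiplicities; this also shows the characteristic determinant cannot vanish identically, making your ``eventually of one sign'' aside unnecessary.
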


Next, given the self-adjoint $H$ with boundary matrix $B$, one can define the unitary group $(U(t))_{t\in\br}$ which acts as translation inside the intervals (see point (ii) in the next theorem), and once it reaches a boundary point, it splits with probabilities defined by the matrix $B$ (point (iii) in the next theorem).

\begin{theorem}\label{th7.2}
	Let $H$ be self-adjoint restriction of the maximal operator $D$, with boundary matrix $B$. Let $$U(t)=\exp{(2\pi i t H)},\quad (t\in\br),$$ be the associated one-parameter unitary group.
	\begin{enumerate}
		\item The domain $\D(H)$ is invariant for $U(t)$ for all $t\in\br$, i.e., if $f\in\mathscr D$ with $B f(\vec\alpha)= f(\vec\beta)$, then $U(t)f\in\mathscr D$ with $B(U(t)f)(\vec\alpha)=(U(t)f)(\vec\beta)$.
		\item Fix $i\in\{1,\dots,n\}$ and let $t\in\br$ such that $(\alpha_i,\beta_i)\cap ((\alpha_i,\beta_i)-t)\neq \ty$. Then, for $f\in L^2(\Omega)$, 
		
		\begin{equation}
			(U(t) f)(x)=f(x+t),\mbox{ for a.e. }x\in (\alpha_i,\beta_i)\cap ((\alpha_i,\beta_i)-t).
			\label{equ1}
		\end{equation}

		In particular,
		\begin{equation}
			(U(\beta_i-x) f)(x)=f(\beta_i),\, (U(\alpha_i-x)f)(x)=f(\alpha_i),\mbox{ for }f\in\D_B, x\in (\alpha_i,\beta_i).
			\label{eq4.2.2}
		\end{equation}
		\item For $f\in\mathscr D(H)$, if $x\in (\alpha_i,\beta_i)$ and $t>\beta_i-x$, then
		
		\begin{equation}
			\left[ U(t)f\right](x)=\pi_i\left(B\left[U(t-(\beta_i-x))f\right](\vec\alpha)\right).
			\label{eq4.4.1}
		\end{equation}
		Here $\pi_i:\bc^n\rightarrow\bc$ denotes the projection onto the $i$-th component $\pi (x_1,x_2\dots,x_n)=x_i$.
		
	\end{enumerate}
\end{theorem}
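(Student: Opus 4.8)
The plan is to read off all three parts from the one‑parameter group $U(t)=\exp(2\pi itH)$, using Theorem \ref{th7.1} to identify $\mathscr D(H)$ with $\mathscr D_B$, the integrability (local translation) property, and a small amount of one‑dimensional Sobolev regularity. Part (i) requires essentially no work: $H$ is self‑adjoint, so $U(t)=\exp(2\pi itH)$ commutes with $H$ and maps $\mathscr D(H)$ onto itself for every $t$; since $\mathscr D(H)=\mathscr D_B$ by Theorem \ref{th7.1}, this says exactly that $f\in\mathscr D_B$ implies $U(t)f\in\mathscr D$ with $B(U(t)f)(\vec\alpha)=(U(t)f)(\vec\beta)$.

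For the translation identity \eqref{equ1}, I would first note that $H$ is a self‑adjoint restriction of $D$, so by Theorem \ref{thai} the set $\Omega$ has the integrability property of Definition \ref{defp5}, and the connected components of $\Omega$ are precisely the open intervals $(\alpha_i,\beta_i)$. Hence Proposition \ref{prp7} applies: whenever $x$ and $x+t$ lie in the same interval $(\alpha_i,\beta_i)$, equivalently $x\in(\alpha_i,\beta_i)\cap((\alpha_i,\beta_i)-t)$, we get $(U(t)g)(x)=g(x+t)$ for every bounded continuous $g$ on $\Omega$. To pass to an arbitrary $f\in L^2(\Omega)$, approximate $f$ in $L^2(\Omega)$ by bounded continuous functions $g_k$ (possible since $\Omega$ is bounded and open), use that $U(t)$ is unitary so $U(t)g_k\to U(t)f$ in $L^2$, and that translation by $t$ is an $L^2$‑isometry so $g_k(\cdot+t)\to f(\cdot+t)$ in $L^2$ on the relevant interval; passing to a common a.e.\ convergent subsequence on both sides gives $(U(t)f)(x)=f(x+t)$ for a.e.\ $x\in(\alpha_i,\beta_i)\cap((\alpha_i,\beta_i)-t)$.

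To get the boundary formulas \eqref{eq4.2.2}, fix $f\in\mathscr D_B$ and $x\in(\alpha_i,\beta_i)$. The map $t\mapsto U(t)f$ is continuous from $\br$ into $\mathscr D(H)$ with the graph norm, since both $U(t)f\to U(t_0)f$ and $HU(t)f=U(t)Hf\to U(t_0)Hf$ by strong continuity of $U$ applied to $f$ and to $Hf$; on each interval the graph norm dominates the $H^1$‑norm, which embeds continuously into $C([\alpha_i,\beta_i])$, so $t\mapsto(U(t)f)(x)$ is continuous on $\br$. By \eqref{equ1}, $(U(t)f)(x)=f(x+t)$ for $t\in(\alpha_i-x,\beta_i-x)$, and as $t\nearrow\beta_i-x$ the right side tends to $f(\beta_i-)=f(\beta_i)$ by absolute continuity of $f$ on that interval; continuity in $t$ forces $(U(\beta_i-x)f)(x)=f(\beta_i)$, and letting $t\searrow\alpha_i-x$ gives $(U(\alpha_i-x)f)(x)=f(\alpha_i)$. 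Part (iii) then follows from (i) and \eqref{eq4.2.2} via the group law: for $f\in\mathscr D(H)$, $x\in(\alpha_i,\beta_i)$ and $t>\beta_i-x$, put $g=U\bigl(t-(\beta_i-x)\bigr)f$; by (i), $g\in\mathscr D_B$, so $Bg(\vec\alpha)=g(\vec\beta)$ and hence $\pi_i\bigl(Bg(\vec\alpha)\bigr)=g(\beta_i)$; since $U(t)=U(\beta_i-x)\,U\bigl(t-(\beta_i-x)\bigr)$, applying \eqref{eq4.2.2} to $g$ gives $[U(t)f](x)=(U(\beta_i-x)g)(x)=g(\beta_i)=\pi_i\bigl(B[U(t-(\beta_i-x))f](\vec\alpha)\bigr)$, which is \eqref{eq4.4.1}.

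I expect the one genuinely delicate point to be the passage to the endpoint in \eqref{eq4.2.2}: the translation identity \eqref{equ1} is only valid on the open region where both $x$ and $x+t$ stay inside $(\alpha_i,\beta_i)$, so evaluating at the boundary requires the regularity input that $t\mapsto U(t)f$ is graph‑norm continuous together with the embedding of the graph‑norm domain into $C([\alpha_i,\beta_i])$. Everything else — parts (i) and (iii), and the $L^2$ density argument in part (ii) — is routine once the interior translation formula and Theorem \ref{th7.1} are in hand.
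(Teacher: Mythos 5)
Your argument is correct, and it is essentially self-contained relative to the rest of the paper; note that the paper itself states Theorem \ref{th7.2} without proof, deferring to \cite{DJ23,DJ15b}, so there is no in-text argument to compare against. Each step checks out: (i) is exactly Theorem \ref{thstone0}(c) combined with the identification $\mathscr{D}(H)=\mathscr{D}_B$ from Theorem \ref{th7.1}; (ii) follows from Theorem \ref{thai} (a single self-adjoint restriction is a commuting family for $n=1$) together with Proposition \ref{prp7} applied to the connected components $(\alpha_i,\beta_i)$, and your $L^2$-limit passage from bounded continuous functions to general $f$ needs no a.e.\ subsequence extraction, since both sides converge in $L^2$ of the overlap set; and (iii) is a clean formal consequence of (i), \eqref{eq4.2.2} and the group law. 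You correctly identify the one genuinely delicate point, the endpoint evaluation in \eqref{eq4.2.2}: the interior identity \eqref{equ1} says nothing at $x+t=\beta_i$, and your fix --- graph-norm continuity of $t\mapsto U(t)f$ (from $HU(t)f=U(t)Hf$ and strong continuity applied to $f$ and $Hf$), the equivalence of the graph norm with the $H^1$-norm on each interval, and the one-dimensional Sobolev embedding $H^1(\alpha_i,\beta_i)\hookrightarrow C([\alpha_i,\beta_i])$ --- is exactly what is needed, provided one also observes (as you implicitly do) that for $f\in\mathscr{D}_B$ the two sides of \eqref{equ1} are continuous representatives agreeing a.e.\ on an open interval, hence everywhere, so that the pointwise limit $t\nearrow\beta_i-x$ is legitimate. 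I have no corrections.
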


The next question is: when is a union of intervals spectral? The requirement is simple: the constants $c_i$ that define the eigenvectors in \eqref{eq7.1.1} should all be equal, so that the eigenvectors are exponential functions. 
\begin{theorem}\label{th7.3}
	The union of intervals $\Omega$ is spectral if and only if there exists an $n\times n$ unitary matrix $B$ with the property that for each $\lambda\in\br$, the equation $BE(\lambda\vec\alpha)c=E(\lambda\vec\beta)c$, $c\in\bc^n$ has either only the trivial solution $c=0$, or only constant solutions of the form $c=a(1,1,\dots,1)$, $a\in\bc$. In this case a spectrum $\Lambda$ of $\Omega$ is the spectrum of the self-adjoint restriction $H$ of $D$ associated to the boundary matrix $B$, as in Theorem \ref{th7.1}.
	
	If $\Lambda$ is a spectrum for $\Omega$, then the matrix $B$ is uniquely and well-defined by the conditions 
	\begin{equation}
		Be_\lambda(\vec \alpha)=e_\lambda(\vec \beta),\mbox{ for all }\lambda\in\Lambda.
		\label{eqfu4.1}
	\end{equation}
	Moreover 
	\begin{equation}
		\textup{span}\{ e_\lambda(\vec \alpha) : \lambda\in\Lambda\}=\textup{span} \{e_\lambda(\vec\beta):\lambda\in\Lambda\}=\bc^n.
		\label{eqfu4.2}
	\end{equation}
	
	Conversely, if the boundary matrix $B$ has the specified property then a spectrum for $\Omega$ is given by 
	\begin{equation}
		\Lambda=\{\lambda\in\br : Be_\lambda(\vec\alpha)=e_\lambda(\vec\beta)\}.
		\label{eqfu4.3}
	\end{equation}
\end{theorem}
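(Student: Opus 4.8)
\emph{Proof plan.} The plan is to translate spectrality of $\Omega$ into the stated matrix condition using two dictionaries already available: Theorem~\ref{th7.1}, matching self-adjoint restrictions $H$ of $D$ with unitary $n\times n$ matrices $B$ and describing the $\lambda$-eigenspace by \eqref{eq7.1.1}, and Theorem~\ref{th2.1}(b) together with Remark~\ref{rem2.3} (no connectedness needed), matching orthogonal bases of exponentials with self-adjoint restrictions of $D$. The bookkeeping fact that glues the two pictures is that $E(\lambda\vec\alpha)\mathbf 1=e_\lambda(\vec\alpha)$ and $E(\lambda\vec\beta)\mathbf 1=e_\lambda(\vec\beta)$ for $\mathbf 1=(1,\dots,1)$, so that ``$\mathbf 1$ solves $BE(\lambda\vec\alpha)c=E(\lambda\vec\beta)c$'' is literally ``$Be_\lambda(\vec\alpha)=e_\lambda(\vec\beta)$'', and the function $x\mapsto e^{2\pi i\lambda x}\sum_i\chi_{(\alpha_i,\beta_i)}(x)$ in \eqref{eq7.1.1} corresponding to $c=\mathbf 1$ is exactly $e_\lambda$ restricted to $\Omega$.

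For the forward implication I would start from a spectrum $\Lambda$, so that $\{e_\lambda:\lambda\in\Lambda\}$ is an orthogonal basis of $L^2(\Omega)$; Theorem~\ref{th2.1}(b) produces a unique self-adjoint restriction $H$ of $D$ with $\{e_\lambda:\lambda\in\Lambda\}\subset\mathscr D(H)$ and $\sigma(H)=\Lambda$, and Theorem~\ref{th7.1} attaches to it a boundary matrix $B$; then $e_\lambda\in\mathscr D(H)=\mathscr D_B$ is exactly \eqref{eqfu4.1}. To check that $B$ has the required property, I would use that, $\{e_\gamma\}$ being an orthogonal basis, each $\lambda\in\sigma(H)$ is a \emph{simple} eigenvalue (a vector in the $\lambda$-eigenspace orthogonal to $e_\lambda$ is orthogonal to every $e_\gamma$, hence zero); feeding this into \eqref{eq7.1.1} shows the solution space of $BE(\lambda\vec\alpha)c=E(\lambda\vec\beta)c$ is one-dimensional and contains $\mathbf 1$ when $\lambda\in\Lambda$, hence consists of constant vectors, and is trivial when $\lambda\notin\Lambda=\sigma(H)$. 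Comparing with the determinant description of $\sigma(H)$ in Theorem~\ref{th7.1} gives $\Lambda=\{\lambda\in\br:Be_\lambda(\vec\alpha)=e_\lambda(\vec\beta)\}$.

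The step I expect to be the main obstacle is the span identity \eqref{eqfu4.2}, which also delivers the uniqueness and well-definedness of $B$ (it pins $B$ down on a spanning set through \eqref{eqfu4.1}). I would prove $\operatorname{span}\{e_\lambda(\vec\alpha):\lambda\in\Lambda\}=\bc^n$ thus: $H$ is the graph-norm closure of its restriction to $\operatorname{span}\{e_\lambda:\lambda\in\Lambda\}$, and $f\mapsto f(\vec\alpha)$ maps $\mathscr D(H)$ onto $\bc^n$ (given $w$, interpolate $w_i$ at $\alpha_i$ and $(Bw)_i$ at $\beta_i$ affinely on each interval to land in $\mathscr D_B=\mathscr D(H)$). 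Picking $w\in\bc^n$ and $f\in\mathscr D(H)$ with $f(\vec\alpha)=w$, approximate $f$ in the graph norm by finite sums $f_N\in\operatorname{span}\{e_\lambda\}$; since $Hf=\tfrac1{2\pi i}f'$, this is $H^1$-convergence on each interval, so by the one-dimensional Sobolev embedding $H^1(\alpha_i,\beta_i)\hookrightarrow C[\alpha_i,\beta_i]$ (see the Appendix) the boundary values converge, $f_N(\vec\alpha)\to w$; since $\operatorname{span}\{e_\lambda(\vec\alpha)\}$ is a finite-dimensional (hence closed) subspace containing every $f_N(\vec\alpha)$, it contains $w$. The $\vec\beta$ half of \eqref{eqfu4.2} then follows by applying $B$. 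This is the only genuinely analytic point, and it is where boundedness of the intervals is used.

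For the converse I would take a unitary $B$ with the stated property and let $H$ be the self-adjoint restriction of $D$ with boundary matrix $B$ (Theorem~\ref{th7.1}). By the property, $BE(\lambda\vec\alpha)c=E(\lambda\vec\beta)c$ has a nonzero solution exactly when $\mathbf 1$ is one, i.e.\ exactly when $\lambda\in\Lambda$ as in \eqref{eqfu4.3}; matching this with the determinant formula gives $\sigma(H)=\Lambda$, and for each $\lambda\in\Lambda$ the eigenspace \eqref{eq7.1.1} is spanned by the $c=\mathbf 1$ function, i.e.\ by $e_\lambda$. Since the spectral measure of $H$ is atomic (Theorem~\ref{th7.1}), $L^2(\Omega)$ is the orthogonal direct sum of the one-dimensional eigenspaces $\bc e_\lambda$, $\lambda\in\Lambda$, so $\{e_\lambda:\lambda\in\Lambda\}$ is an orthogonal basis of $L^2(\Omega)$ and $\Omega$ is spectral with spectrum $\Lambda$. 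Beyond Theorem~\ref{th7.1} this direction is formal; only the span identity needs real work.
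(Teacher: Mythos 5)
Your argument is correct. Note that the paper itself states Theorem \ref{th7.3} without proof (the results of Section \ref{secun} are quoted from \cite{DJ23,DJ15b}), so there is no in-text proof to compare against; judged on its own, your route is the natural one and every step checks out. The reduction hinges correctly on the observation that $E(\lambda\vec\alpha)\mathbf 1=e_\lambda(\vec\alpha)$ and that the map $c\mapsto e^{2\pi i\lambda x}\sum_i c_i\chi_{(\alpha_i,\beta_i)}$ in \eqref{eq7.1.1} is an injective linear isomorphism from the solution space of $BE(\lambda\vec\alpha)c=E(\lambda\vec\beta)c$ onto the $\lambda$-eigenspace, so simplicity of the eigenvalue (an eigenvector orthogonal to $e_\lambda$ is orthogonal to the whole basis $\{e_\gamma\}$, hence zero) does give the ``only constant solutions'' alternative, and the determinant description of $\sigma(H)$ in Theorem \ref{th7.1} handles $\lambda\notin\Lambda$. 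The converse correctly combines atomicity of the spectral measure with one-dimensionality of each eigenspace to produce the orthogonal basis. Your proof of the span identity \eqref{eqfu4.2} --- surjectivity of $f\mapsto f(\vec\alpha)$ on $\mathscr D_B$ by affine interpolation compatible with the boundary condition, graph-norm density of $\operatorname{span}\{e_\lambda\}$ in $\mathscr D(H)$ from the uniqueness clause in the proof of Theorem \ref{th2.1}(b), and graph-norm continuity of boundary evaluation --- is sound and is indeed the only genuinely analytic step; it also correctly delivers uniqueness of $B$ from \eqref{eqfu4.1}. One small citation quibble: Theorem \ref{tha1} in the Appendix is stated for $\varphi\in C_0^\infty(\Omega)$ and does not directly give the trace estimate you invoke; what you actually need is the elementary one-dimensional bound $|f(\alpha_i+)|\leq C\left(\|f\|_{L^2(\alpha_i,\beta_i)}+\|f'\|_{L^2(\alpha_i,\beta_i)}\right)$ for absolutely continuous $f$ with $f'\in L^2$, which follows from $f(x)=f(y)+\int_y^x f'$ after averaging over $y$. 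This is standard, so it is a presentational point rather than a gap.
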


What about the unitary group when the set $\Omega$ is spectral? In this case, the unitary group $\{U(t)\}$ acts as translations not only inside intervals, but also when it jumps from one interval to another. More precisely, for any $t\in\br$, and $f\in L^2(\Omega)$,
$$(U(t)f)(x)=f(x+t)\mbox{ for a.e. }x\in\Omega\cap(\Omega-t).$$
In other words $(U(t)f)(x)=f(x+t)$ when both $x$ and $x+t$ are in $\Omega$. We call such a unitary group, a {\it group of local translations}.

\begin{theorem}\label{th7.4}
	Assume that $\Omega$ is spectral with spectrum $\Lambda$. Then the unitary group $U=U_\Lambda$ associated to $\Lambda$ is a unitary group of local translations. 
	Conversely, if there exists a strongly continuous unitary group of local translations $(U(t))_{t\in\br}$ for $\Omega$, then $\Omega$ is spectral. 
\end{theorem}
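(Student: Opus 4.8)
The plan is to prove the two implications separately, in each case leveraging the structural description of $\Omega$ as a finite union of intervals together with Theorems \ref{th7.1}, \ref{th7.2}, and \ref{th7.3}. For the forward direction, assume $\Omega$ is spectral with spectrum $\Lambda$, let $B$ be the boundary matrix attached to $\Lambda$ by \eqref{eqfu4.1}, and let $H$ be the corresponding self-adjoint restriction of $D$ with $U(t)=\exp(2\pi itH)$. I already know from Theorem \ref{th7.2}(ii) that $U$ acts as translation \emph{inside} each interval; the content here is to show translation persists across jumps, i.e. $(U(t)f)(x)=f(x+t)$ whenever $x,x+t\in\Omega$ but possibly in different intervals. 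First I would reduce to the case $f=e_\mu$ for $\mu\in\Lambda$, since $\{e_\mu:\mu\in\Lambda\}$ is an orthogonal basis of $L^2(\Omega)$ and both sides of the claimed identity are bounded operators in $f$ (the right-hand side, ``restrict-then-translate'', is a partial isometry on $L^2(\Omega\cap(\Omega-t))$). For a single exponential, $U(t)e_\mu=e^{2\pi it\mu}e_\mu$ by Definition \ref{defp2} / Corollary \ref{corp4}, so $(U(t)e_\mu)(x)=e^{2\pi it\mu}e^{2\pi i\mu x}=e^{2\pi i\mu(x+t)}=e_\mu(x+t)$ pointwise; summing against arbitrary $f=\sum c_\mu e_\mu$ gives the identity on the set where $x$ and $x+t$ both lie in $\Omega$. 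One must be a little careful that the expansion $U(t)f=\sum c_\mu e^{2\pi it\mu}e_\mu$ converges in $L^2(\Omega)$ and that restricting to a subset and evaluating is compatible with the sum — this is routine Hilbert-space bookkeeping, using that $\{e_\mu\}$ restricted to a subinterval is still a Bessel system.

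For the converse, suppose $(U(t))_{t\in\br}$ is a strongly continuous unitary group of local translations on $L^2(\Omega)$. Since local translation in particular implies the integrability property of Definition \ref{defp5} (local translation on all of $\Omega\cap(\Omega-t)$ is stronger than the neighborhood-wise condition there), Theorem \ref{thai} produces commuting self-adjoint restrictions $H=(H_1,\dots,H_n)$ — here $n=1$, a single self-adjoint restriction $H$ of $D$ — with $U(t)=\exp(2\pi itH)$. By Theorem \ref{th7.1}, $H$ has an associated unitary boundary matrix $B$ with $\mathscr D(H)=\mathscr D_B$, and its spectrum is the discrete set $\sigma(H)=\{\lambda:\det(I-E(\lambda\vec\beta)^{-1}BE(\lambda\vec\alpha))=0\}$, with eigenfunctions of the form \eqref{eq7.1.1}. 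By Theorem \ref{th7.3}, to conclude $\Omega$ is spectral it suffices to show that for every $\lambda\in\br$ the solution space of $BE(\lambda\vec\alpha)c=E(\lambda\vec\beta)c$ is either $\{0\}$ or consists only of constant vectors $c=a(1,\dots,1)$; equivalently, every eigenfunction of $H$ is (a scalar multiple of) a genuine exponential $e_\lambda$ on all of $\Omega$, not a piecewise exponential with different coefficients on different intervals.

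The heart of the argument — and the step I expect to be the main obstacle — is exactly this rigidity: I must use the \emph{global} local-translation hypothesis to force the coefficients $c_i$ in \eqref{eq7.1.1} to coincide. The idea is to pick indices $i\neq j$ and points $x\in(\alpha_i,\beta_i)$, $x'\in(\alpha_j,\beta_j)$ with $x'-x=t$ for some $t$ such that $x+t\in\Omega$; if no such pair exists for the given pair of intervals we can chain through several intervals. For an eigenfunction $f$ as in \eqref{eq7.1.1} with eigenvalue $\lambda$, on one hand $U(t)f=e^{2\pi it\lambda}f$, so $(U(t)f)(x)=e^{2\pi it\lambda}c_i e^{2\pi i\lambda x}=e^{2\pi i\lambda(x+t)}c_i$; on the other hand the local-translation property gives $(U(t)f)(x)=f(x+t)=c_j e^{2\pi i\lambda(x+t)}$ (since $x+t\in(\alpha_j,\beta_j)$). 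Comparing yields $c_i=c_j$. The delicate point is ensuring that for \emph{every} pair $i,j$ one can realize such a translation landing in the correct interval — this is where one exploits that $\Omega$ is a finite union of intervals of finite total length, so by choosing $t$ appropriately (possibly negative, possibly large) and chaining, the orbit of translations connects all intervals; a secondary subtlety is that the local-translation identity holds only a.e., so one works with the continuous representatives (the eigenfunctions are exponentials, hence continuous on each interval) and upgrades a.e. equality to equality at interior points by continuity. Once all $c_i$ are shown equal, every eigenspace is spanned by exponentials, Theorem \ref{th7.3} applies with $\Lambda=\sigma(H)=\{\lambda:Be_\lambda(\vec\alpha)=e_\lambda(\vec\beta)\}$, and $\Omega$ is spectral.
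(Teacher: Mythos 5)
Your proposal is correct, and it follows what is surely the intended argument: the paper itself states Theorem \ref{th7.4} without proof (deferring to \cite{DJ23,DJ15b}), so there is no in-text proof to compare against, but both directions as you give them are sound. The forward direction (check the local-translation identity on the orthogonal basis $\{e_\mu\}_{\mu\in\Lambda}$, where it holds pointwise since $U(t)e_\mu=e^{2\pi i t\mu}e_\mu$, then extend by boundedness of ``apply $U(t)$ then restrict'' and ``translate then restrict'' as operators into $L^2(\Omega\cap(\Omega-t))$) and the converse (local translations give the integrability property, hence via Theorem \ref{thai} a self-adjoint restriction $H$ with boundary matrix $B$, and the identity $c_ie^{2\pi i\lambda(x+t)}=(U(t)f)(x)=f(x+t)=c_je^{2\pi i\lambda(x+t)}$ forces the eigenfunction coefficients in \eqref{eq7.1.1} to be constant, so Theorem \ref{th7.3} applies) are both complete. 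The one point you flag as delicate is not: since $t$ ranges over all of $\br$, any two of the finitely many intervals can be made to overlap by a single translation (take $t$ to be the difference of their midpoints), so no chaining is needed, and the a.e.\ identity on a set of positive measure already yields $c_i=c_j$ without appealing to continuity.
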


\section{Examples}\label{secex}

We start with a simple proposition which allows us to check if a unitary group $U(t)$ is associated to a given spectrum $\Lambda$ of a set $\Omega$. 

\begin{proposition}
	\label{pr8.1}
	Let $\Omega$ be a finite measure spectral set in $\br^n$ with spectrum $\Lambda$. Suppose $U(t\delta_i)$, $t\in\br$, $i=1,\dots,n$ are some bounded operators on $L^2(\Omega)$ with 
	\begin{equation}
		\label{eq8.1.1}
		U(t\delta_i)e_\lambda=e^{2\pi i t\lambda_i}e_\lambda,\mbox{ for all $t\in\br$, $i=1,\dots,n$ and $\lambda\in\Lambda$.}
	\end{equation}
	Here, $\delta_i$, $i=1,\dots, n$, are the canonical vectors in $\br^n$. 
	
	Then $(U(t))_{t\in\br^n}$ is the unitary group associated to $\Lambda$ as in Theorems \ref{thp5} and \ref{thai}.
\end{proposition}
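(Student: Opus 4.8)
The plan is to show that the hypothesis \eqref{eq8.1.1} forces each operator $U(t\delta_i)$ to coincide with the $i$-th coordinate operator of the unitary group $U_\Lambda$ that is canonically associated to the spectrum $\Lambda$ (equivalently, to the spectral pair $(\Omega,\mu)$ of Corollary \ref{corp4}, and to the commuting self-adjoint restrictions $\{H_i\}$ as in Theorems \ref{thp5} and \ref{thai}), and then to recover the full $\br^n$-action from these coordinate operators.

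First I would recall the relevant property of $U_\Lambda$. Since $\Omega$ is spectral with spectrum $\Lambda$, the family $\{e_\lambda:\lambda\in\Lambda\}$ is an orthogonal basis of $L^2(\Omega)$ (Definition \ref{defsp}, Corollary \ref{corp4}); in particular its linear span is dense in $L^2(\Omega)$. By the computation at the start of the proof of Corollary \ref{corp4} (or, equivalently, via Theorems \ref{thai} and \ref{th2.1}, from $H_je_\lambda=\lambda_je_\lambda$ together with $U_\Lambda(t\delta_j)=\exp(2\pi itH_j)$), one has $U_\Lambda(t\delta_i)e_\lambda=e^{2\pi it\lambda_i}e_\lambda$ for all $t\in\br$, $i=1,\dots,n$ and $\lambda\in\Lambda$.

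Next I would compare this with \eqref{eq8.1.1}: the two bounded operators $U(t\delta_i)$ and $U_\Lambda(t\delta_i)$ agree on every basis vector $e_\lambda$, $\lambda\in\Lambda$, hence on their dense linear span, hence everywhere. Therefore $U(t\delta_i)=U_\Lambda(t\delta_i)$ for every $t\in\br$ and every $i$; in particular the operators $U(t\delta_i)$ are automatically unitary and strongly continuous in $t$. Finally, since $(U_\Lambda(s))_{s\in\br^n}$ is a strongly continuous unitary representation of $\br^n$, for $t=(t_1,\dots,t_n)\in\br^n$ we have $U_\Lambda(t)=U_\Lambda(t_1\delta_1)\cdots U_\Lambda(t_n\delta_n)=U(t_1\delta_1)\cdots U(t_n\delta_n)$. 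Taking this product as the definition of $U(t)$ (equivalently: the operators $U(t\delta_i)$ generate a unique strongly continuous unitary representation of $\br^n$), we conclude $U(t)=U_\Lambda(t)$ for all $t\in\br^n$, i.e.\ $(U(t))_{t\in\br^n}$ is the unitary group associated to $\Lambda$ as in Theorems \ref{thp5} and \ref{thai}.

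There is no genuine difficulty here; the only step that deserves a little attention is the last one, where one must note that \eqref{eq8.1.1} constrains only the coordinate directions, so the identification of the full $\br^n$-action rests on $U_\Lambda$ being a bona fide representation together with the product formula above. The identification $U(t\delta_i)=U_\Lambda(t\delta_i)$ itself is just the standard fact that two bounded operators agreeing on a subset whose linear span is dense must coincide.
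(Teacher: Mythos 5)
Your proposal is correct and follows essentially the same route as the paper: both arguments rest on the fact that a bounded operator is determined by its action on the orthogonal basis $\{e_\lambda:\lambda\in\Lambda\}$, so \eqref{eq8.1.1} forces $U(t\delta_i)$ to be the diagonal operator with entries $e^{2\pi it\lambda_i}$, which is exactly the $i$-th coordinate group of the canonical unitary group associated to $\Lambda$. Your explicit remark that the full $\br^n$-action is recovered as the product $U(t_1\delta_1)\cdots U(t_n\delta_n)$ is a point the paper passes over silently, but it is the same argument.
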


\begin{proof}
	Indeed, if \eqref{eq8.1.1} holds, this means that in the orthogonal basis $\{e_\lambda :\lambda\in\Lambda\}$, the operator $U(t\delta_i)$ is a diagonal operator with entries $\{e^{2\pi it\lambda_i}\}_{\lambda\in\Lambda}$. But these diagonal operators clearly form a strongly continuous unitary group and therefore the same is true for $(U(t))_{t\in\br^n}$.
	
\end{proof}

\begin{example}\label{ex8.1}
	The simplest example of a spectral set is of course the classical Fourier series: the unit interval $[0,1]$ with the spectrum $\bz$. This corresponds to the self-adjoint restriction $H$ of the differential operator $D$ with domain 
$$\mathscr{D}(H)=\{f\in L^2[0,1] : f\mbox{ absolutely continuous}, f'\in L^2[0,1]\mbox{ and } f(0)=f(1)\}.$$

With Theorem \ref{th7.3}, the boundary matrix $B$ is determined by $Be_\lambda(0)=e_\lambda(1)$, for all $\lambda\in\bz$ so $B=1$.

The unitary group $U(t)$ is 
$$(U(t)f)(x)=f((x+t)\mod \bz),\quad(f\in L^2[0,1],t\in\br,x\in[0,1]).$$
This can be checked with Proposition \ref{pr8.1}: for $\lambda\in\bz$, $t\in\br$ and $x\in[0,1]$:
$$(U(t)e_\lambda)(x)=e_\lambda((x+t)\mod \bz)=e_\lambda(x+t)=e^{2\pi i\lambda t}e_\lambda(x).$$

For $[0,1]^n$ with spectrum $\bz^n$, a similar check shows that the unitary group is 
$$(U(t)f)(x)=f((x+t)\mod\bz^n),\quad (f\in L^2[0,1]^n,x\in[0,1]^n,t\in\br^n).$$

\end{example}

\begin{example}\label{ex8.3}
	Consider $\Omega=[0,1]$ with spectrum $\bz+a$ for some $a\in\br$. Note that, for any $\lambda\in\bz$,
	$$e_\lambda(1)=e^{2\pi i (\lambda+a)}=e^{2\pi ia}=e^{2\pi ia}e_\lambda(0).$$
	By Theorem \ref{th7.3}, the boundary matrix is $B=e^{2\pi ia}$. Therefore the domain of the self-adjoint restriction $H$ of the differential operator $D$ is 
	$$\mathscr D=\{ f\in L^2[0,1] : f\mbox{ absolutely continuous }, f'\in L^2[0,1]\mbox{ and} f(1)=e^{2\pi ia}f(0)\}.$$
	
	The group $U(t)$ acts as translation as long as the point stays inside $[0,1)$; once it reaches $1$ the function gets multiplied by the phase $e^{2\pi ia}$ and starts back from $0$. 
	
	For $x\in[0,1)$ and $t\in\br$, let $\{t+x\}=(t+x)\mod\bz\in[0,1)$ be the fractional part of $x+t$, and $\lfloor x+t\rfloor=(x+t)-\{x+t\}$ be the integer part of $x+t$.

	We define 
	$$(U(t)f)(x)=e^{2\pi i a\lfloor x+t\rfloor}f(\{x+t\}),\quad(f\in L^2(\Omega), x\in[0,1],t\in\br).$$
	Think of a point that starts at $x$ and moves in $[0,1]$ during the time $t$. In the beginning, after a few seconds $s$, the point is at $x+s$. Once it reaches $1$, it goes back to $0$, but it also carries the phase $e^{2\pi ia}$. Then it continues to move through the interval $[0,1]$, and when it reaches $1$ again, it moves again back to zero and gets another phase factor $e^{2\pi ia}$, so now it carries a factor $e^{2\pi i\cdot2a}$. And so on, until time $t$ expires. The number $\lfloor x+t\rfloor$ counts how many times the point returns to $0$, and $\{x+t\}$ is the final position in $[0,1]$.
	
	We check with Proposition \ref{pr8.1}, that $U(t)$ is the associated unitary group to the spectrum $\Lambda=\bz+a$. We have, for $\lambda\in\bz$, $x\in[0,1]$, $t\in\br$,
	$$(U(t)e_{\lambda+a})(x)=e^{2\pi ia\lfloor x+t\rfloor}e^{2\pi i(\lambda+a)\{x+t\}}=e^{2\pi i\lambda\lfloor x +t\rfloor}e^{2\pi ia\lfloor x+t\rfloor}e^{2\pi i(\lambda+a)\{x+t\}}$$
	$$=e^{2\pi i(\lambda+a)(\lfloor x+t\rfloor+\{x+t\})}=e^{2\pi i(\lambda+a)(x+t)}=e^{2\pi i(\lambda+a)t}e_{\lambda+a}(x)$$
\end{example}

\begin{example}\label{ex8.4}
	For an example of a set of infinite measure that is part of a spectral pair, the classical example is of course $\Omega=\br$ and $\mu$ the Lebesgue measure on $\br$. The Fourier transform on $\br$ makes $(\Omega,\mu)$ a spectral pair. 
	
	The unitary group in this case is the usual group of translations
	$$(U(t)f)(x)=f(x+t),\quad(f\in L^2(\br),x,t\in\br).$$
	Indeed, we can check \eqref{eqfe}: for $\lambda\in\br$, $f\in L^2(\br)$ and $x,t\in\br$,
	$$(U(t)f,e_\lambda)=\int f(x+t)e^{-2\pi i \lambda x}\,dx=\int f(x)e^{-2\pi i\lambda(x-t)}\,dx=e^{2\pi i\lambda t}(f,e_\lambda).$$
	
	Similarly for the spectral pair $\Omega=\br^n$ and $\mu$ the Lebesgue measure on $\br^n$.

\end{example}

\begin{example}
	Let $\Omega$ be the square $\Omega=[0,1]^2$ and $\Lambda\subset\br^2$, the set of pairs $(\lambda_1,\lambda_2)$ with $\lambda_2$ an even integer and $\lambda_1\in\bz$, or $\lambda_2$ an odd integer and $\lambda_1\in\bz+\frac12$. So 
	$$\Lambda=(\bz\times 2\bz) \bigcup\left(\bz+\frac12\right)\times (2\bz+1).$$
	
	It is easy to check that $\Lambda$ is a spectrum for the square: for $\lambda=(\lambda_1,\lambda_2)$ and $\lambda'=(\lambda_1',\lambda_2')$, 
	$$\ip{e_\lambda}{e_{\lambda'}}=\int_0^1e^{2\pi i(\lambda_1-\lambda_1')x_1}\,dx_1\int_0^1e^{2\pi i(\lambda_2-\lambda_2')x_2}\,dx_2,$$
	So $e_\lambda$ is orthogonal to $e_{\lambda'}$ iff $\lambda_1-\lambda_1'\in\bz$ or $\lambda_2-\lambda_2'\in\bz$. 
	
   To see that $\{e_\lambda :\lambda\in \Lambda\}$ is complete, it is enough to check that, for $k\in\bz^2$, $e_k$ is in their span. For this, we just have to check that $e_{(k_1,2k_2+1)}$ is in their span for $k_1,k_2\in\bz$. 
   But $\bz+\frac12$ is a spectrum for $[0,1]$ and therefore $e_{k_1}$ is in the span of $\{e_{\lambda_1+\frac12} :\lambda_1\in\bz\}$. Consequently,
   $e_{(k_1,2k_2+1)}(x,y)=e_{k_1}(x)e_{2k_2+1}(y)$ is in the span of $\{e_{\lambda_1+\frac12}(x)e_{2k_2+1}(y)=e_{(\lambda_1+\frac12,2k_2+1)}(x,y) : \lambda_1\in\bz\}.$
   
   We define now a unitary group $U(t)$ and we check with Proposition \ref{pr8.1} that it is the one associated to the spectrum $\Lambda$. It will be enough to define $U(0,t_2)$ and $U(t_1,0)$ for $t_1,t_2\in\br$. 
   
   Before this, we make some remarks. Note that, given $\lambda=(\lambda_1,\lambda_2)\in \Lambda$, for the move in the vertical direction, since $\lambda_2\in\bz$, $e_\lambda(x_1,0)=e_{\lambda}(x_1,1)$, so we expect $U(0,t_2)$ to preserve the values when going through the boundary points $(x_1,1)$ back to $(x_1,0)$. Hence, in the vertical direction, $U(0,t_2)$ should act simply as translation $\mod\bz$. 
   
   For the horizontal direction, since $\lambda_1\in\bz$ or $\lambda_1\in\bz+\frac12$, 
   $$e_\lambda(0,x_2)=e^{2\pi i \lambda_2x_2},\quad e_\lambda(1,x_2)=e^{2\pi i(\lambda_1+\lambda_2x_2)}=(-1)^{2\lambda_1}e_{\lambda}(0,x_2).$$
   
   The values might not be the same. We will consider instead, for $x_2\in[0,1)$ the pair $(x_2,s(x_2)) $, where $s(x_2)=(x_2+\frac12)\mod\bz\in[0,1)$. Note also $s(s(x_2))=x_2$.
   We compare the values of a vector on the boundary:
   $$\vec E_\lambda(0,x_2):=\begin{bmatrix}
   	e_\lambda(0,x_2)\\e_\lambda(0,s(x_2))
   \end{bmatrix}=\begin{bmatrix}
   e^{2\pi i\lambda_2x_2}\\e^{2\pi i\lambda_2s(x_2)}
   \end{bmatrix}$$
  
	   $$\vec E_\lambda(1,x_2):=\begin{bmatrix}
		e_\lambda(1,x_2)\\e_\lambda(1,s(x_2))
	\end{bmatrix}=\begin{bmatrix}
		e^{2\pi i(\lambda_1+\lambda_2x_2)}\\e^{2\pi i(\lambda_1+\lambda_2s(x_2))}
	\end{bmatrix}.$$
	
	Let $a=e^{2\pi i\lambda_2x_2}$. If $\lambda_2$ is even then $\lambda_1\in\bz$ and 
		$$\vec E_\lambda(0,x_2)=\begin{bmatrix}
			a\\a
		\end{bmatrix}=\vec E_\lambda(1,x_2).$$
		If $\lambda_2$ is odd then $\lambda_1\in\bz+\frac12$ and 
		$$\vec E_\lambda(0,x_2)=\begin{bmatrix}
	 a\\-a
		\end{bmatrix},\quad \vec E_\lambda(1,x_2)=\begin{bmatrix}
		-a\\a
		\end{bmatrix}.$$
		We are looking for a $2\times 2$ matrix $B$ which maps $\begin{bmatrix}
			a\\a
		\end{bmatrix}$ to itself, and $\begin{bmatrix}
		a\\-a
		\end{bmatrix}$ to $\begin{bmatrix}
		-a\\a
		\end{bmatrix}$. The matrix that does this is the flip matrix $\begin{bmatrix}
		0&1\\1&0
		\end{bmatrix}$.
		
		This gives us the intuition that, for a point moving $(x_1,x_2)$ horizontally with time $(t_1,0)$, our unitary group $U(t_1,0)$ should flip between $x_2$ and $s(x_2)$ each time the boundary is reached, and in the horizontal component it just acts as a translation $\mod \bz$. The integer $\lfloor x_1+t_1\rfloor$ counts how many times the point goes through the boundary. 
		
		We define 
		$$(U(0,t_2)f)(x_1,x_2)=f(x_1,(x_2+t_2)\mod\bz),\quad(f\in L^2([0,1]^2),(x_1,x_2)\in[0,1]^2,t_2\in\br),$$
		$$(U(t_1,0)f)(x_1,x_2)=f((x_1+t_1)\mod \bz, s^{\lfloor x_1+t_1\rfloor}(x_2)),\quad(f\in L^2([0,1]^2),(x_1,x_2)\in[0,1]^2,t_1\in\br).$$
		
		We check the eigenvalue property on $e_\lambda$ with $\lambda=(\lambda_1,\lambda_2)\in\Lambda$. 
		
		$$(U(0,t_2)e_\lambda)(x_1,x_2)=e_\lambda(x_1,(x_2+t_2)\mod \bz)
		=e^{2\pi i(\lambda_1x_1+\lambda_2((x_2+t_2)\mod\bz))}$$$$=e^{2\pi i(\lambda_1x_1+\lambda_2(x_2+t_2))} \mbox{ (since $\lambda_2\in\bz$) }=e^{2\pi i\lambda_2t_2}e_\lambda(x_1,x_2).$$
		
		For the horizontal component
		$$(U(t_1,0)e_\lambda)(x_1,x_2)=e_\lambda((x_1+t_1)\mod\bz,s^{\lfloor x_1+t_1\rfloor }(x_2))$$$$=e^{2\pi i(\lambda_1((x_1+t_1)\mod \bz)+\lambda_2 s^{\lfloor x_1+t_1\rfloor}(x_2))}=:A.$$
		
		If $\lambda_2$ is even, then $\lambda_1\in\bz$ and $\lambda_2 s^{\lfloor x_1+t_1\rfloor}(x_2)=\lambda_2 x_2\mod \bz$, because $s^{\lfloor x_1+t_1\rfloor}(x_2)$ is either $x_2$ or $x_2+\frac12\mod \bz$. Then 
		$$A=e^{2\pi i(\lambda_1(x_1+t_1)+\lambda_2x_2)}=e^{2\pi i\lambda_1t_1}e_\lambda(x_1,x_2).$$
		
		If $\lambda_2$ is odd, then $\lambda_1\in\bz+\frac12$. If $\lfloor x_1+t_1\rfloor$ is even then $s^{\lfloor x_1+t_1\rfloor}(x_2)=x_2$. Also, $\lambda_1\lfloor x_1+t_1\rfloor\in\bz$ so  $\lambda_1((x_1+t_1)\mod \bz) =\lambda_1(x_1+t_1)-\lambda_1\lfloor x_1+t_1\rfloor$. Therefore
		$$A=e^{2\pi i(\lambda_1(x_1+t_1)+\lambda_2x_2)}=e^{2\pi i\lambda_1t_1}e_\lambda(x_1,x_2).$$
		
		If $\lfloor x_1+t_1\rfloor$ is odd, then $s^{\lfloor x_1+t_1\rfloor}(x_2)=s(x_2)=(x_2+\frac12)\mod\bz$. Also $\lambda_1((x_1+t_1)\mod \bz)=\lambda_1(x_1+t_1)-\lambda_1\lfloor x_1+t_1\rfloor =\lambda_1(x_1+t_1)+\frac12+k$ for some integer $k$, because $\lambda_1\in\bz+\frac12$. In addition $\lambda_2s(x_2)=\lambda_2x_2+\frac12+l$ for some integer $l$. Then 
		$$A=e^{2\pi i((\lambda_1(x_1+t_1)+\frac12+k+\lambda_2x_2+\frac12+l))}=e^{2\pi i\lambda_1t_1}e_\lambda(x_1,x_2).$$
		The eigenvalue property is checked, and with Proposition \ref{pr8.1}, we get that $\{U(t_1,t_2)\}$ is the unitary group associated to the spectrum $\Lambda$.
\end{example}

\begin{example}
	Consider now $\Omega=[0,\frac12]\cup[1,\frac32]$ with spectrum $\Lambda=2\bz\cup(2\bz+\frac12)$. 
	
	We can check directly that $\Lambda$ is a spectrum, but it will also follow from the study of the self-adjoint restriction of the differential operator $D$ associated to it. 
	
	We consider the boundary values of the exponential functions $e_\lambda$ with $\lambda\in\Lambda$. 
	
	For $\lambda\in 2\bz$,
	$$e_\lambda(\vec\alpha)=\begin{bmatrix}
		e^{2\pi i\lambda\cdot 0}\\
		e^{2\pi i\lambda\cdot 1}
	\end{bmatrix}=\begin{bmatrix}
	1\\
	1
	\end{bmatrix},\quad e_\lambda(\vec\beta)=\begin{bmatrix}
	e^{2\pi i\lambda\cdot \frac12}\\
	e^{2\pi i\lambda\cdot \frac32}\end{bmatrix}=\begin{bmatrix}
		1\\
		1
	\end{bmatrix}=e_\lambda(\vec\alpha).
	$$
	
	For $\lambda\in 2\bz+\frac12$,
		$$e_\lambda(\vec\alpha)=\begin{bmatrix}
		e^{2\pi i\lambda\cdot 0}\\
		e^{2\pi i\lambda\cdot 1}
	\end{bmatrix}=\begin{bmatrix}
		1\\
		-1
	\end{bmatrix},\quad e_\lambda(\vec\beta)=\begin{bmatrix}
		e^{2\pi i\lambda\cdot \frac12}\\
		e^{2\pi i\lambda\cdot \frac32}\end{bmatrix}=\begin{bmatrix}
		i\\
		-i
	\end{bmatrix}=i e_\lambda(\vec\alpha).
	$$
	
	We need a $2\times 2$ matrix $B$ that maps $\begin{bmatrix}
		1\\1
	\end{bmatrix}$ to itself and maps  $\begin{bmatrix}
	1\\-1
	\end{bmatrix}$ to  $\begin{bmatrix}
	i\\-i
	\end{bmatrix}$. Then 
	$$B \begin{bmatrix}
		1\\0
	\end{bmatrix}=\frac12 B\left(\begin{bmatrix}
	1\\1
	\end{bmatrix}+\begin{bmatrix}
	1\\-1
	\end{bmatrix}\right)=\begin{bmatrix}
	\frac{1+i}2\\\frac{1-i}{2}
	\end{bmatrix},\quad
	B \begin{bmatrix}
		0\\1
	\end{bmatrix}=\frac12 B\left(\begin{bmatrix}
		1\\1
	\end{bmatrix}-\begin{bmatrix}
		1\\-1
	\end{bmatrix}\right)=\begin{bmatrix}
		\frac{1-i}2\\\frac{1+i}{2}
	\end{bmatrix}.$$
	Thus $$B=\begin{bmatrix}
		\frac{1+i}{2}& \frac{1-i}{2}\\
		\frac{1-i}{2}&\frac{1+i}{2}
	\end{bmatrix}.$$
	
	Clearly $B$ is unitary and note also that the vector $e_\lambda(\vec\alpha)$ is an eigenvector for the matrix $B$ with eigenvalue $1$ if $\lambda\in 2\bz$, and with eigenvalue $i$ if $\lambda\in 2\bz+\frac12$. 
	
	We check that the matrix $B$ has the property stated in Theorem \ref{th7.3}: if $c=(c_1,c_2)^T$ is a vector in $\bc^2$, $\lambda\in\br$ and $BE(\lambda\vec\alpha)c=E(\lambda\vec\beta)c$, this means 
	$$B\begin{bmatrix}
		c_1\\
		c_2 e^{2\pi i\lambda}
	\end{bmatrix}=\begin{bmatrix}
	c_1e^{2\pi i\lambda\cdot\frac12}\\
	c_2 e^{2\pi i\lambda\cdot\frac13}
	\end{bmatrix}=e^{2\pi i\lambda\frac12}\begin{bmatrix}
	c_1\\
	c_2 e^{2\pi i\lambda}
	\end{bmatrix}.$$
	Then $\begin{bmatrix}
		c_1\\
		c_2 e^{2\pi i\lambda}
	\end{bmatrix}$ is an eigenvector for $B$ with eigenvalue $e^{2\pi i\lambda\cdot\frac12}$; this in turn implies, with the computations above, that $e^{2\pi i\lambda\cdot\frac12}\in\{1,i\}$ and $c_1=c_2$. Thus, we can use Theorem \ref{th7.3}. The self-adjoint restriction $H$ of the differential operator $D$ has domain 
	$$\mathscr{D} (H)=\left\{f\in L^2(\Omega): f\mbox{ absolutely continuous },f'\in L^2(\Omega)\mbox{ and }B\begin{bmatrix}
		f(0)\\
		f(1)
	\end{bmatrix}=\begin{bmatrix}
	f(\frac12)\\
	f(\frac32)
	\end{bmatrix}\right\}.$$
	
	The spectrum is given by $\lambda\in\br$ with $B e_\lambda(\vec\alpha)=e_\lambda(\vec\beta)$. This means
	$$B\begin{bmatrix}
		1\\ e^{2\pi i\lambda}
	\end{bmatrix}=\begin{bmatrix}
	e^{2\pi i\lambda\cdot\frac12}\\ e^{2\pi i\lambda\cdot\frac32}
	\end{bmatrix}=e^{2\pi i\lambda\cdot\frac12}\begin{bmatrix}
	1\\ e^{2\pi i\lambda}
	\end{bmatrix}.$$
	This shows that the vector $\begin{bmatrix}
		1\\ e^{2\pi i \lambda}
	\end{bmatrix}$ has to be an eigenvector for the matrix $B$ with eigenvalue $e^{2\pi i \lambda\frac12}$. But, as we have seen above, the eigenvalues of $B$ are 1 and $i$ and so the condition is satisfied iff $\lambda$ is in $\Lambda=2\bz\cup(2\bz+\frac12)$, which shows also that $\Lambda$ is a spectrum for $\Omega$.

	We examine now the unitary group $\{U(t)\}$ associated to the spectrum $\Lambda$. Think of the pair of points $x\in[0,\frac12]$ and $x+1\in[\frac12,\frac32]$, and let $t$ vary. For small values of $t$ we just translate the points to $(x+t,x+1+t)$; once we reach the boundary points $(\frac12,\frac32)$, the matrix $B$ has to be applied (as in Theorem \ref{th7.2}(c)). The integer $\lfloor x+t\rfloor_{\frac12}:=2((x+t)-((x+t)\mod\frac12\bz))$ counts how many times one has to go through a boundary point. 
	
	For a function $f$ in $L^2(\Omega)$, we can think of $f$ as having two components corresponding to the two intervals, $f\leftrightarrow (f(x),f(x+1))^T$, $x\in[0,\frac12]$. We define
	$$U(t)\begin{bmatrix}
		f(x)\\ f(x+1)
	\end{bmatrix}= B^{\lfloor x+t\rfloor_{\frac12}}\begin{bmatrix}
	f\left((x+t)\mod\frac12\bz\right)\\
	f\left(((x+t)\mod\frac12\bz)+1\right)
	\end{bmatrix},\quad(x\in[0,\frac12]).$$
	
	To see that this is indeed the unitary group associated to the self-adjoint operator $H$, we just need to check that $U(t)e_\lambda=e^{2\pi i\lambda t}e_\lambda$ for $\lambda\in\Lambda$ and $t\in\br$. 
	
	We have
	$$U(t)\begin{bmatrix}
		e^{2\pi i\lambda x}\\
		e^{2\pi i\lambda(x+1)}
	\end{bmatrix}=B^{\lfloor x+t\rfloor_{\frac12}}\begin{bmatrix}
	e^{2\pi i \lambda((x+t)\mod\frac12\bz)}\\
	e^{2\pi i \lambda(((x+t)\mod\frac12\bz)+1)}
	\end{bmatrix}=e^{2\pi i \lambda((x+t)\mod\frac12\bz)}B^{\lfloor x+t\rfloor_{\frac12}}\begin{bmatrix}
	1\\e^{2\pi i\lambda}
	\end{bmatrix}.$$
	
	Recall that $\begin{bmatrix}
		1\\e^{2\pi i\lambda}
	\end{bmatrix}$ is an eigenvector for $B$ with eigenvalue $e^{2\pi i \lambda\cdot\frac12}$, and therefore $$B^{\lfloor x+t\rfloor_{\frac12}}\begin{bmatrix}
	1\\e^{2\pi i\lambda}
	\end{bmatrix}=e^{2\pi i\lambda\frac12\lfloor x+t \rfloor_{\frac12}}\begin{bmatrix}
	1\\e^{2\pi i\lambda}
	\end{bmatrix}.$$
	Note also that $(x+t)\mod\frac12\bz+\frac12\lfloor x+t \rfloor_{\frac12}=x+t$.
	Hence, we obtain further 
		$$U(t)\begin{bmatrix}
		e^{2\pi i\lambda x}\\
		e^{2\pi i\lambda(x+1)}
	\end{bmatrix}=e^{2\pi i \lambda((x+t)\mod\frac12\bz+\frac12\lfloor x+t \rfloor_{\frac12})}\begin{bmatrix}
	1\\e^{2\pi i\lambda}\end{bmatrix}=e^{2\pi i\lambda(x+t)}\begin{bmatrix}
		1\\e^{2\pi i\lambda}\end{bmatrix}=e^{2\pi i\lambda t}\begin{bmatrix}
		e^{2\pi i\lambda x}\\e^{2\pi i\lambda(x+1)}\end{bmatrix}.$$
		Consequently,  $U(t)e_\lambda=e^{2\pi i\lambda t}e_\lambda$, and $\{U(t)\}$ is the unitary group associated to the self-adjoint operator $H$.
	
\end{example}

\section{Appendix: Spectral Theorems and some fundamental inequalities}\label{secap}

 For the benefit of readers, we include in this appendix some of the fundamental results used by Fuglede and Pedersen in their proofs; we also add the following citations for details on unbounded operators and their extensions and for the part of the theory of unitary representations we need below, especially the Stone Theorem. For supplements on unbounded operators, and extensions of the Spectral Theorem, and associated spectral representations, we point to \cite{Con90, DS88, Nel70, Nel69, ReSi80}, and for the relevant parts from the theory of unitary representations \cite{Mac62,Mac60, Jor91, Jor76}.

\subsection{Spectral Theorems, Stone's Theorem}

\begin{definition}\label{defspectral}
	If $X$ is a set, $\mathcal B$ is a $\sigma$-algebra of subsets of $X$, and $\mathfrak H$ is a Hilbert space, a {\it spectral measure/spectral resolution} for $(X,\mathcal B,\mathfrak H)$ is a function $E:\mathcal B\rightarrow\mathscr B(\mathfrak H)$ such that:
	\begin{itemize}
		\item[(a)] for each $\Delta$ in $\mathcal B$, $E(\Delta)$ is a projection;
		\item[(b)] $E(\ty)=0$ and $E(X)=I_{\mathfrak H}$;
		\item[(c)] $E(\Delta_1\cap\Delta_2)=E(\Delta_1)E(\Delta_2)$ for $\Delta_1$ and $\Delta_2$ in $\mathcal B$;
		\item[(d)] if $\{\Delta_n\}_{n=1}^\infty$ are pairwise disjoint sets from $\mathcal B$, then 
		$$E\left(\bigcup_{n=1}^\infty\Delta_n\right)=\sum_{n=1}^\infty E(\Delta_n).$$
	\end{itemize}
	
\end{definition}

\begin{theorem}\label{thspectral}{\bf Spectral Theorem for unbounded self-adjoint operators} \cite[Theorem 4.11,page 323]{Con90} If $A$ is a self-adjoint operator on $\mathfrak H$, then there exists a unique spectral measure $E$ defined on the Borel subsets of $\br$ such that 
	\begin{itemize}
		\item[(a)] $A=\int x\,dE(x)$;
		\item[(b)] $E(\Delta)=0$ if $\Delta\cap\sigma(A)=\ty$;
		\item[(c)] if $U$ is an open subset of $\br$ and $U\cap\sigma(A)\neq\ty$, then $E(U)\neq 0$;
		\item[(d)] if $B\in\mathscr B(H)$ such that $BA\subseteq AB$, then $B(\int\phi\,dE)\subseteq (\int \phi\,dE)B$ for every Borel function $\phi$ on $\br$. 
	\end{itemize}
	
\end{theorem}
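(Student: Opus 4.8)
The plan is to reduce the unbounded statement to the spectral theorem for a bounded unitary operator by means of the Cayley transform. Since $A=A^*$, the operators $A\pm iI$ map $\mathscr D(A)$ bijectively onto $\mathfrak H$ and the resolvents $(A\pm iI)^{-1}$ are everywhere defined and bounded (of norm $\le 1$); set
$$U=(A-iI)(A+iI)^{-1}.$$
A direct computation shows $U$ is unitary, that $I-U=2i(A+iI)^{-1}$ has range exactly $\mathscr D(A)$ (hence dense, so $1$ is not an eigenvalue of $U$), and conversely that $A=i(I+U)(I-U)^{-1}$ on $\mathscr D(A)=\operatorname{ran}(I-U)$.

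Next I would invoke the spectral theorem for a bounded unitary operator (a special case of the bounded normal case): there is a unique spectral measure $F$ on the Borel sets of the circle $\mathbb T$ with $U=\int_{\mathbb T}z\,dF(z)$, with $\operatorname{supp}F=\sigma(U)$, and with $F(\{1\})=0$ by the previous paragraph. The Cayley map $\psi(x)=(x-i)(x+i)^{-1}$ is a Borel isomorphism of $\br$ onto $\mathbb T\setminus\{1\}$, with inverse $z\mapsto i(1+z)(1-z)^{-1}$; define
$$E(\Delta)=F(\psi(\Delta)),\qquad \Delta\subseteq\br\ \text{Borel}.$$
Then $E$ is a spectral measure on $\br$, and feeding the (unbounded) Borel function $z\mapsto i(1+z)(1-z)^{-1}$ into the functional calculus for $U$ and changing variables $x=\psi^{-1}(z)$ yields $A=i(I+U)(I-U)^{-1}=\int_{\br}x\,dE(x)$, with the correct domain $\{u:\int x^2\,d\langle E(x)u,u\rangle<\infty\}$. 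This is (a).

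For (b) and (c) I would identify $\operatorname{supp}E$ with $\sigma(A)$: for $\lambda\in\br$ one has $(A-\lambda I)^{-1}\in\mathscr B(\mathfrak H)$ if and only if $(x-\lambda)^{-1}$ is essentially bounded with respect to $E$, i.e. $E$ vanishes on a neighbourhood of $\lambda$; the same computation with non-real $\lambda$, where $(A-\lambda I)^{-1}=\int(x-\lambda)^{-1}\,dE(x)$ always exists and is bounded, forces $\operatorname{supp}E\subseteq\br$. Uniqueness: if $E'$ also represents $A$, then $(A-zI)^{-1}=\int(x-z)^{-1}\,dE'(x)$ for every $z\in\bc\setminus\br$, so the Borel--Stieltjes transforms of the finite measures $\langle E'(\cdot)u,u\rangle$ and $\langle E(\cdot)u,u\rangle$ agree off $\br$; by Stieltjes inversion these measures coincide for all $u$, hence $E'=E$. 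For (d): if $B\in\mathscr B(\mathfrak H)$ and $BA\subseteq AB$, then $B$ commutes with $(A\pm iI)^{-1}$, hence with $U$, hence --- by the commutation clause already available for the bounded spectral measure $F$ --- with $F(\cdot)$, hence with $E(\cdot)$ and with $\int\phi\,dE$ for bounded Borel $\phi$; a truncation of $\phi$ then handles the general case in the sense of inclusion of operators.

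The genuine analytic obstacle is the input: establishing the spectral theorem for the bounded unitary $U$ in the first place --- via Gelfand theory for the commutative $C^*$-algebra it generates, or via a Herglotz/Bochner representation of the positive-definite sequence $n\mapsto\langle U^n u,u\rangle$. Once that is in hand, the Cayley-transform passage above is bookkeeping, the only real care being the matching of unbounded domains in the functional-calculus identities for $i(I+U)(I-U)^{-1}$.
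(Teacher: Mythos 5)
The paper does not prove this statement at all: Theorem \ref{thspectral} appears in the Appendix as quoted background, attributed to \cite[Theorem 4.11, page 323]{Con90}, so there is no in-paper argument to compare against. Your Cayley-transform sketch is in fact essentially the proof given in that cited reference (and in Rudin and Reed--Simon), and it is sound as an outline: the boundedness and surjectivity of $(A\pm iI)^{-1}$ from self-adjointness, the unitarity of $U=(A-iI)(A+iI)^{-1}$ with $\ker(I-U)=\operatorname{ran}(I-U^*)^{\perp}=\{0\}$ so that $F(\{1\})=0$, the pullback $E(\Delta)=F(\psi(\Delta))$ along the Borel isomorphism $\psi:\br\to\mathbb{T}\setminus\{1\}$, the identification $\operatorname{supp}E=\sigma(A)$ via boundedness of $(A-\lambda I)^{-1}$ versus essential boundedness of $(x-\lambda)^{-1}$ (which gives (b) and (c)), uniqueness by Stieltjes inversion of the resolvent transforms, and the derivation $B(A\pm iI)^{-1}=(A\pm iI)^{-1}B$ from $BA\subseteq AB$ for (d) are all correct. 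The two places where a full write-up needs genuine care are exactly the ones you flag: the spectral theorem for the bounded unitary $U$ as input, and the domain bookkeeping when the unbounded function $z\mapsto i(1+z)(1-z)^{-1}$ is fed into the functional calculus for $F$ (one must verify that $\operatorname{ran}(I-U)$ coincides with $\{u:\int x^{2}\,d\ip{E(x)u}{u}<\infty\}$). One trivial wording slip: the resolvent computation with non-real $\lambda$ shows $\sigma(A)\subseteq\br$, not $\operatorname{supp}E\subseteq\br$, the latter being automatic since $E$ is constructed on the Borel sets of $\br$.
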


\begin{remark}\label{remspec}
	
	Note that, starting with a fixed self-adjoint operator $A$ in a Hilbert space $\mathfrak H$, the corresponding projection valued measure $E$ then induces a spectral representation for $A$  via a system of scalar measures indexed by the vectors in $H$. These are defined for $h,k\in \mathfrak H$ by 
	\begin{equation}
		E_{h,k}(\Delta)=\ip{E(\Delta)h}{k},\mbox{ for all Borel sets }\Delta.
		\label{eqremspec0}
	\end{equation}

	Then the following three conclusions follow immediately  (see \cite[Proposition 4.7, page 321]{Con90}):
	\begin{enumerate}
		\item If $\varphi:\br\rightarrow \bc$ is measurable, then the operator 
		\begin{equation}
			\varphi(A):=\int_{\br}\varphi(x)\,dE(x)
			\label{eqremspec1}
		\end{equation}	
		is well defined in $\mathfrak H$ and normal. 
		\item The dense domain of $\varphi(A)$, denoted $\D(\varphi(A))$, consists of all vectors $h\in\mathfrak H$ such that 
		\begin{equation}
			\int_\br |\varphi(x)|^2\,dE_{h,h}<\infty,
			\label{eqremspec2}
		\end{equation}
		\item For all $h\in \D(\varphi(A))$, 
		\begin{equation}
			\|\varphi(A)h\|^2=\int_{\br}|\varphi(x)|^2\,dE_{h,h}.
			\label{eqremspec3}
		\end{equation}
	\end{enumerate}
	
\end{remark}

\begin{definition}\label{defstone}
	A {\it strongly continuous unitary group} is a function $U:\br^n\rightarrow\mathscr B(\mathfrak H)$ such that for all $s$ and $t$ in $\br^n$:
	\begin{itemize}
		\item[(a)] $U(t)$ is a unitary operator;
		\item[(b)] $U(s+t)=U(s)U(t)$ for all $s,t\in\br$;
		\item[(c)] if $h\in \mathfrak H$ and $t_0\in\br^n$ then $U(t)h\rightarrow U(t_0)h$ as $t\rightarrow t_0$.
	\end{itemize}
	
\end{definition}

\begin{theorem}\label{thstone0}\cite[Theorem 5.1, page 327]{Con90}
	Let $A$ be a self-adjoint operator on $\mathfrak H$ and let $E$ on $(X,\mathcal B,\mathfrak H)$ be its spectral measure. Define 
	$$U(t)=\exp(2\pi it A)=\int e^{2\pi itx}\,d E(x),\quad (t\in\br).$$
	Then 
	\begin{itemize}
		\item[(a)] $(U(t))_{t\in\br}$ is a strongly continuous unitary group;
		\item[(b)] if $h\in\mathscr D(A)$, then 
		$$\lim_{t\rightarrow 0}\frac{1}{t}(U(t)h-h)=2\pi i Ah;$$
		\item[(c)] if $h\in\mathfrak H$ and $\lim_{t\rightarrow 0}\frac1t(U(t)h-h)$ exists, then $h\in\mathscr D(A)$. Consequently, $\D(A)$ is invariant under $U(t)$.
	\end{itemize}
	
\end{theorem}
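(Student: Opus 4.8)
The plan is to reduce all three claims to the measurable functional calculus recorded in Remark~\ref{remspec}, applied to the bounded functions $\varphi_t(x)=e^{2\pi i t x}$ and to the finite positive Borel measures $E_{h,h}(\Delta)=\ip{E(\Delta)h}{h}$ on $\br$ (which have total mass $\|h\|^2$). For part (a), I would note that since $|\varphi_t|\equiv 1$ each $U(t)=\varphi_t(A)$ is a bounded normal operator, and that the pointwise identities $\varphi_t\varphi_s=\varphi_{t+s}$, $\overline{\varphi_t}=\varphi_{-t}$, $\varphi_0\equiv 1$ pass, by multiplicativity of the functional calculus, to $U(t)U(s)=U(t+s)$, $U(t)^*=U(-t)$, $U(0)=I$; hence every $U(t)$ is unitary and $(U(t))_{t\in\br}$ is a one-parameter group. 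Strong continuity at $t_0$ would then follow from \eqref{eqremspec3}, which gives
$$\|U(t)h-U(t_0)h\|^2=\int_\br\bigl|e^{2\pi i t x}-e^{2\pi i t_0 x}\bigr|^2\,dE_{h,h}(x),$$
together with dominated convergence: the integrand is bounded by the $E_{h,h}$-integrable constant $4$ and tends to $0$ pointwise as $t\to t_0$.

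For part (b), I would fix $h\in\mathscr D(A)$, so $\int_\br x^2\,dE_{h,h}<\infty$ by \eqref{eqremspec2}. For $t\neq 0$ the function $x\mapsto\frac{e^{2\pi i t x}-1}{t}-2\pi i x$ is bounded, so \eqref{eqremspec3} gives
$$\Bigl\|\tfrac1t\bigl(U(t)h-h\bigr)-2\pi i Ah\Bigr\|^2=\int_\br\Bigl|\frac{e^{2\pi i t x}-1}{t}-2\pi i x\Bigr|^2\,dE_{h,h}(x).$$
Using $|e^{i\theta}-1|\le|\theta|$ one has $\bigl|\frac{e^{2\pi i t x}-1}{t}\bigr|\le 2\pi|x|$, so the integrand is dominated, uniformly in $t$, by $16\pi^2 x^2\in L^1(E_{h,h})$ and tends to $0$ pointwise; dominated convergence then yields (b).

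For part (c), suppose $g:=\lim_{t\to 0}\frac1t(U(t)h-h)$ exists in $\mathfrak H$ and pick any sequence $t_n\to 0$ with $t_n\neq 0$. By \eqref{eqremspec3}, $\bigl\|\frac1{t_n}(U(t_n)h-h)\bigr\|^2=\int_\br\bigl|\frac{e^{2\pi i t_n x}-1}{t_n}\bigr|^2\,dE_{h,h}(x)$, and the integrand tends pointwise to $4\pi^2 x^2$, so Fatou's lemma gives
$$4\pi^2\int_\br x^2\,dE_{h,h}(x)\le\liminf_n\Bigl\|\tfrac1{t_n}\bigl(U(t_n)h-h\bigr)\Bigr\|^2=\|g\|^2<\infty,$$
hence $h\in\mathscr D(A)$ by \eqref{eqremspec2} (and then $g=2\pi i Ah$ by part (b)). For the invariance of $\mathscr D(A)$, I would use that $U(s)$ commutes with every $U(t)$, so for $h\in\mathscr D(A)$, $\frac1t(U(t)U(s)h-U(s)h)=U(s)\bigl(\frac1t(U(t)h-h)\bigr)\to U(s)(2\pi i Ah)$ as $t\to 0$, and part (c) then gives $U(s)h\in\mathscr D(A)$.

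The estimates in (a) and (b) are routine dominated-convergence arguments once the functional-calculus identities of Remark~\ref{remspec} are in hand. The one step that needs a little care is (c): the difference quotients $\frac1t(U(t)h-h)$ are not a priori known to lie in $\mathscr D(A)$, so one cannot bound $\int x^2\,dE_{h,h}$ directly; instead one passes to the limit along a sequence and uses the lower semicontinuity supplied by Fatou's lemma. I expect that to be the only genuine obstacle.
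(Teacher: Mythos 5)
The paper does not actually prove Theorem \ref{thstone0}: it is quoted from \cite[Theorem 5.1, page 327]{Con90} and used as a black box, so there is no in-paper argument to compare against. Your proof is the standard functional-calculus argument and is correct in all three parts; in particular the Fatou/lower-semicontinuity step in (c) (which you rightly single out as the only delicate point) and the commutation argument for the invariance of $\mathscr D(A)$ are exactly right, and they match what one finds in the cited reference.

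One inaccuracy in (b): the function $x\mapsto \frac{e^{2\pi i tx}-1}{t}-2\pi i x$ is \emph{not} bounded for fixed $t\neq 0$, since the term $2\pi i x$ is unbounded. What you actually need --- and what your own estimate $\bigl|\frac{e^{2\pi i tx}-1}{t}\bigr|\le 2\pi|x|$ already delivers --- is that this function lies in $L^2(E_{h,h})$ whenever $h\in\mathscr D(A)$, so that $h$ belongs to the domain of the associated normal operator by \eqref{eqremspec2} and the norm identity \eqref{eqremspec3} applies to it. To identify that operator applied to $h$ with $\frac1t(U(t)h-h)-2\pi i Ah$ you are also implicitly using the additivity $\varphi(A)h+\psi(A)h=(\varphi+\psi)(A)h$ for $h\in\mathscr D(\varphi(A))\cap\mathscr D(\psi(A))$; this is standard, but it is the hinge on which the displayed equality in (b) turns, so it deserves a sentence. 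Neither point is a gap in substance --- just replace ``is bounded'' by the correct justification.
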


\begin{theorem}\label{thstone} {\bf Stone's Theorem} \cite[Theorem 5.6, page 330]{Con90}.
	If $U$ is a strongly continuous one parameter unitary group, then there exists a self-adjoint operator $A$ such that $U(t)=\exp(2\pi itA)$, $t\in \br$, and conversely. The self-adjoint operator $A$ is called the infinitesimal generator for $U$. 
	
\end{theorem}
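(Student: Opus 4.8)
The plan is to prove the substantive half: given a strongly continuous one-parameter unitary group $(U(t))_{t\in\br}$ on $\mathfrak H$, produce a self-adjoint $A$ with $U(t)=\exp(2\pi itA)$ for all $t\in\br$. The reverse implication is exactly Theorem \ref{thstone0}(a), and once such an $A$ is produced, uniqueness is forced by Theorem \ref{thstone0}(b)--(c), which pin $A$ down as the infinitesimal generator intrinsically attached to $U$.

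First I would define the candidate generator by
\[
\mathscr D(A)=\Bigl\{h\in\mathfrak H:\ \lim_{t\to 0}\tfrac1t\bigl(U(t)h-h\bigr)\ \text{exists}\Bigr\},\qquad Ah=\frac{1}{2\pi i}\lim_{t\to 0}\tfrac1t\bigl(U(t)h-h\bigr),
\]
and establish that $\mathscr D(A)$ is dense by mollification: for $\varphi\in C_c^\infty(\br)$ and $h\in\mathfrak H$ the vector integral $h_\varphi=\int_\br\varphi(s)U(s)h\,ds$ is well defined by strong continuity, the group law gives $\tfrac1t(U(t)h_\varphi-h_\varphi)\to -\int_\br\varphi'(s)U(s)h\,ds$ so $h_\varphi\in\mathscr D(A)$, and running $\varphi$ through an approximate identity yields $h_\varphi\to h$. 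The same manipulation shows $U(t)\mathscr D(A)\subseteq\mathscr D(A)$ with $AU(t)h=U(t)Ah$ on $\mathscr D(A)$; and differentiating $\langle U(t)h,k\rangle=\langle h,U(-t)k\rangle$ at $t=0$ for $h,k\in\mathscr D(A)$ gives $\langle Ah,k\rangle=\langle h,Ak\rangle$, so $A$ is symmetric.

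The crux is upgrading symmetry to self-adjointness, and here I would use the Laplace transform of $U$. Writing $B=2\pi iA$ (densely defined, skew-symmetric), set $R_s=\int_0^\infty e^{-st}U(t)\,dt$ for $s>0$, a contraction. A direct computation with the group law shows $R_sh\in\mathscr D(B)$ and $(sI-B)R_sh=h$ for every $h\in\mathfrak H$, while $R_s(sI-B)h=h$ for $h\in\mathscr D(B)$; hence $sI-B$ is a bijection of $\mathscr D(B)$ onto $\mathfrak H$. Applying the same to the group $t\mapsto U(-t)$, whose generator is $-B$, gives that $sI+B$ is onto as well. Thus $I\pm B$ have full range, which is precisely von Neumann's criterion for the symmetric operator $iB$ to be self-adjoint; equivalently $B^*=-B$, i.e. $A^*=A$.

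Finally I would identify $U$ with $V(t):=\exp(2\pi itA)$: by Theorem \ref{thstone0} this $V$ is a strongly continuous unitary group with $\mathscr D(A)$ invariant and $AV(t)=V(t)A$ on $\mathscr D(A)$, so for $h\in\mathscr D(A)$ the map $t\mapsto V(-t)U(t)h$ has derivative $-2\pi i\,V(-t)AU(t)h+2\pi i\,V(-t)AU(t)h=0$, whence $V(-t)U(t)h=h$ and $U(t)h=V(t)h$; density and boundedness give $U=V$. The step I expect to be the real obstacle is the self-adjointness argument: symmetry of the generator is automatic but useless on its own (a merely symmetric operator need not generate a unitary group), and it is exactly the resolvent identity $(sI-B)^{-1}=\int_0^\infty e^{-st}U(t)\,dt$ that both supplies enough vectors in $\mathscr D(B)$ and forces the deficiency spaces of $iB$ to vanish.
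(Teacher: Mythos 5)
The paper does not prove this statement: it appears in the Appendix as a quoted background result, with the proof deferred entirely to the citation \cite[Theorem 5.6]{Con90}, so there is no in-paper argument to compare against. Your proposal is a correct and complete proof, and it follows the classical resolvent route (essentially the one in Conway and in \cite[Theorem VIII.8]{ReSi80}): density of $\mathscr D(A)$ via the mollified vectors $h_\varphi=\int\varphi(s)U(s)h\,ds$, symmetry from differentiating $\ip{U(t)h}{k}=\ip{h}{U(-t)k}$, surjectivity of $sI\mp B$ from the Laplace transform $R_s=\int_0^\infty e^{-st}U(t)\,dt$ applied to $U(\pm t)$, von Neumann's deficiency criterion to upgrade symmetry to self-adjointness, and the derivative-of-$V(-t)U(t)h$ argument to identify $U$ with $\exp(2\pi itA)$. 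All the genuinely delicate points are handled: you correctly isolate the fact that symmetry alone is insufficient, and the identity $(sI-B)R_s=I$ together with $R_s(sI-B)=I$ on $\mathscr D(B)$ is exactly what kills both deficiency spaces (note $\operatorname{ran}(sI\mp B)=\operatorname{ran}\bigl(A\pm i\tfrac{s}{2\pi}\bigr)$, so taking $s=2\pi$ gives the criterion in its textbook form). Two cosmetic remarks: $R_s$ has norm at most $1/s$ rather than being a contraction for all $s>0$ (immaterial, since boundedness is all that is used), and in verifying $BR_sh=sR_sh-h$ one computes only the right-hand difference quotient, with the two-sided limit then following from the group law via $U(-\tau)\tfrac1\tau(U(\tau)h-h)$; both are standard and neither affects the argument.
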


\begin{definition}\label{defcosp}
If we have a family of self-adjoint operators $(A_\beta)_{\beta\in B}$ on a separable Hilbert space $\mathfrak H$, each one will have a spectral measure $E_\beta$ on the Borel subsets of $\br$. We say that the operators $(A_\beta)_{\beta\in B}$ {\it commute in the sense of commuting spectral measure} if for all $\beta_1,\beta_2\in B$ and all Borel subsets $\Delta_1$ and $\Delta_2$ of $\br$, the spectral projections $E_{\beta_1}(\Delta_1)$ and $E_{\beta_2}(\Delta_2)$ commute. 
\end{definition}

One way to look at the Fuglede question for a given dimension $n > 1$, relative to $L^2(\Omega)$, for a fixed measurable set $\Omega$ in $\br^n$, is via the distinction between the following two settings: 1) a system of $n$ unitary one-parameter groups in $L^2(\Omega)$, one for each of the $n$ coordinate directions, vs 2) a single unitary representation of $\br^n$, also acting on $L^2(\Omega)$. Then via Stone's Theorem, the unitary one-parameter groups in 1) may be specified by a corresponding system of $n$ projection valued measures, each defined on the one-dimensional Borel sets; - as opposed to 2) where one must instead specify a single projection valued measure, but now defined on the Borel sets in $\br^n$. To link 1) and 2) we must have the system of projection valued measures from 1) be commutative. Then the corresponding product measure formed from 1) will then be a solution to  2), and vice versa. This is realized also by the Generalized Stone Theorem \ref{thgs}.

\begin{theorem}\label{thcst}{\bf The Complete Spectral Theorem} \cite[page  205, page 333]{Mau67}, \cite[Chapter 6]{Nel69}, \cite{vN49}.
Let $(A_\beta)_{\beta\in B}$ be a commutative family of self-adjoint operators in a separable Hilbert space $\mathfrak H$ (commutative in the sense of commuting spectral projections). Then there is 
\begin{enumerate}
	\item[1.] a direct integral 
	$$\hat{\mathfrak H}=\int_\Lambda\hat{\mathfrak H}(\lambda)\,d\mu(\lambda),$$
	which is a certain family of square integrable vector fields
	\begin{equation}\label{eqst1}
	\Lambda\ni\lambda\rightarrow \hat u(\lambda)\in\hat{\mathfrak H}(\lambda)
	\end{equation}

	on a locally compact space $\Lambda$, the scalar product in $\hat{\mathfrak{H}}$ being
	\begin{equation}\label{eqst2}
	\ip{\hat u}{\hat v}_{\hat{\mathfrak{H}}}=\int_\Lambda \ip{\hat u(\lambda)}{\hat v(\lambda)}_{\hat{\mathfrak H} (\lambda)}\,d\mu(\lambda)=\int_\Lambda\sum_{k=1}^{\dim \hat{\mathfrak{H}}(\lambda)}\hat u_k(\lambda)\cj{\hat v_k(\lambda)}\,d\mu(\lambda);
	\end{equation}
	
	There is a sequence of measurable vector fields $\{y_k\}$, $y_k(\lambda)\in\hat{\mathfrak H}(\lambda)$, for all $\lambda\in\Lambda$, such that $\{y_k(\lambda): k=1,\dots,\dim\hat{\mathfrak H}(\lambda)\}$ is an orthonormal basis for $\hat{\mathfrak H}(\lambda)$, $y_k(\lambda)=0$ for $k>\dim\mathfrak{H}(\lambda)$. The coordinates $\hat u_k(\lambda)$ are given by $\hat u_k(\lambda)=\ip{\hat u(\lambda)}{y_k(\lambda)}_{\hat{\mathfrak H}(\lambda)}$, for all $\lambda\in\Lambda$.
	\item[2.] a unitary map
	$$\mathfrak{H}\ni u\rightarrow \mathcal Fu=\hat u\in\hat{\mathfrak{H}},$$
 which carries all operators $A_\beta$ simultaneously into diagonal operators 
\begin{equation}\label{eqst3}
	(\mathcal FA_\beta u)_k(\lambda)=\hat A_\beta(\lambda)\hat u_k(\lambda),\quad k=1,2,\dots,\dim \hat{\mathfrak H}(\lambda),
\end{equation}
	
	where $\hat A_\beta(\lambda)\in \bc$. 
	
\end{enumerate}

The function $\hat A_\beta(\cdot)$ maps the set $\Lambda$ onto the spectrum of the operator $A_\beta$
\begin{equation}\label{eqst4}
\sigma(A_\beta)=\{\hat A_\beta(\lambda) : \lambda\in\Lambda.\}
\end{equation}

\end{theorem}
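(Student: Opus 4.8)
The plan is to derive Theorem~\ref{thcst} from von Neumann's reduction theory, using the Spectral Theorem (Theorem~\ref{thspectral}) as essentially the only nonstandard input. First I would pass from the operators to the abelian von Neumann algebra they generate: since the $A_\beta$ commute in the sense of commuting spectral measures, all projections $E_\beta(\Delta)$ mutually commute, so they generate an abelian von Neumann algebra $\mathfrak A\subset\mathscr B(\mathfrak H)$, and each $A_\beta=\int x\,dE_\beta(x)$ is affiliated with $\mathfrak A$. Because $\mathfrak H$ is separable, $\mathfrak A$ is generated by countably many projections; I would then fix a weak-$*$ dense separable unital $C^*$-subalgebra of $\mathfrak A$, take $\Lambda$ to be a standard realization of its Gelfand spectrum (locally compact, in fact compact metrizable), and let $\mu$ be a basic measure for $\mathfrak A$ on $\Lambda$.

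The core step is the direct integral decomposition. I would invoke the structure theorem for separably acting abelian von Neumann algebras to produce a measurable field $\lambda\mapsto\hat{\mathfrak H}(\lambda)$ of Hilbert spaces and a unitary $\mathcal F\colon\mathfrak H\to\int_\Lambda\hat{\mathfrak H}(\lambda)\,d\mu(\lambda)$ carrying $\mathfrak A$ exactly onto the \emph{diagonalizable} operators, so every $T\in\mathfrak A$ becomes multiplication by a scalar-valued function $\hat T(\cdot)$. Separability again supplies a measurable field $\{y_k\}$ with $\{y_k(\lambda):1\le k\le\dim\hat{\mathfrak H}(\lambda)\}$ an orthonormal basis of each fiber (and $y_k(\lambda)=0$ for larger $k$), yielding the coordinates $\hat u_k(\lambda)=\ip{\hat u(\lambda)}{y_k(\lambda)}$ and the scalar product in \eqref{eqst2}. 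This delivers items~1 and~2 apart from identifying the symbols $\hat A_\beta$.

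Next I would extract $\hat A_\beta$. Each $E_\beta(\Delta)\in\mathfrak A$ is a projection, so under $\mathcal F$ it is multiplication by a characteristic function $\chi_{S_{\beta,\Delta}}$ for some measurable $S_{\beta,\Delta}\subset\Lambda$. Fixing a countable Boolean algebra of Borel subsets of $\br$ that generates the Borel $\sigma$-algebra, I would check on a co-null set of $\lambda$ that $\Delta\mapsto\chi_{S_{\beta,\Delta}}(\lambda)$ extends to a $\{0,1\}$-valued countably additive Borel measure on $\br$, hence (as $\br$ is Polish) to a point mass $\delta_{\hat A_\beta(\lambda)}$; I set $\hat A_\beta(\lambda)$ equal to that atom. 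Measurability of $\hat A_\beta$ follows from $\{\lambda\in\Lambda:\hat A_\beta(\lambda)\le t\}=S_{\beta,(-\infty,t]}$, and since $A_\beta=\int x\,dE_\beta(x)$, on the fiber over $\lambda$ the operator becomes $\int x\,d\delta_{\hat A_\beta(\lambda)}(x)=\hat A_\beta(\lambda)$, which is \eqref{eqst3}, with $\mathscr D(A_\beta)$ corresponding to the $\hat u$ satisfying $\int_\Lambda|\hat A_\beta(\lambda)|^2\|\hat u(\lambda)\|^2\,d\mu(\lambda)<\infty$. For the last assertion, multiplication by $\hat A_\beta$ has spectrum equal to the essential range of $\hat A_\beta$; since $E_\beta((-\infty,t])$ corresponds to $\chi_{\hat A_\beta^{-1}((-\infty,t])}$, Theorem~\ref{thspectral}(b),(c) identifies that essential range with $\sigma(A_\beta)$, and after discarding the $\mu$-null set where $\hat A_\beta$ leaves $\sigma(A_\beta)$ (equivalently, replacing $\Lambda$ by $\supp{\mu}$) one gets the literal equality \eqref{eqst4}.

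I expect the main obstacle to be the direct integral decomposition of the second paragraph: realizing $\mathfrak H$ as a measurable field of Hilbert spaces on which the abelian algebra $\mathfrak A$ acts diagonally, together with a genuinely measurable choice of orthonormal bases of the fibers, is exactly Hahn--Hellinger multiplicity theory / von Neumann's reduction theory (as in \cite{Mau67,Nel69,vN49}), and it is where separability of $\mathfrak H$ is indispensable; without it there is no reason for the multiplicity function $\dim\hat{\mathfrak H}(\lambda)$ or the fields $\{y_k\}$ to exist. A secondary nuisance is the null-set bookkeeping when passing from ``for each fixed $\Delta$, for a.e.\ $\lambda$'' to ``for a.e.\ $\lambda$, for all $\Delta$'' in the construction of $\hat A_\beta$, which is handled by restricting to a countable generating algebra of sets and then extending. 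Everything else is routine verification.
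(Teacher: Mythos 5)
The paper states Theorem \ref{thcst} without proof, deferring entirely to the cited sources \cite{Mau67}, \cite{Nel69}, \cite{vN49}, and your outline is precisely the standard von Neumann reduction-theory / Hahn--Hellinger argument contained in those references: pass to the generated abelian von Neumann algebra, diagonalize it over a direct integral, and recover the symbols $\hat A_\beta$ from the decomposed spectral projections with the usual countable-generating-algebra bookkeeping. This is correct in outline and matches the intended (cited) proof; the only soft spot is the literal surjectivity in \eqref{eqst4}, which --- as you note --- holds only after adjusting $\hat A_\beta$ on a $\mu$-null set and is an imprecision already present in the statement itself.
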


\begin{theorem}\label{thgs} {\bf Generalized Stone's Theorem} \cite[Theorem VIII.12, page 270]{ReSi80}. 
	Let $(U(t))_{t\in\br^n}$ be a strongly continuous unitary group on a separable Hilbert space $\mathfrak{H}$. Let $\mathscr D$ be the set of finite linear combinations of vectors of the form 
	$$\varphi_f=\int_{\br^n}f(t)U(t)\varphi\,dt,\quad (\varphi\in\mathfrak H, f\in C_0^\infty(\br^n)).$$
	Then $\mathscr D$ is a domain of essential self-adjointness (in the sense of the existence o a unique self-adjoint extension) for each of the infinitesimal generators $H_j$ of the one-parameter subgroups $U(0,0,\dots,y_j,\dots,0)$, each operator $H_j$ maps $\mathscr D$ into itself, and the operators $H_j$ commute $j=1,\dots,n$ (in the sense of commuting spectral measures). Furthermore, for the joint spectral measure $E$ of $(H_1,\dots, H_n)$  on $\br^n$, 
	$$\ip{U(t)\varphi}{\psi}=\int_{\br^n}e^{2\pi i t\cdot \lambda}\,d\ip{E(\lambda)\varphi}{\psi},\quad\mbox{for all }\varphi,\psi\in\mathfrak H.$$
\end{theorem}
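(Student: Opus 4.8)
The plan is to construct the $n$ commuting self-adjoint operators one coordinate direction at a time. For each $j$, the one-parameter subgroup $s\mapsto U(s\delta_j)$ (with $\delta_j$ the $j$-th canonical vector of $\br^n$) is strongly continuous, so ordinary Stone's Theorem \ref{thstone} produces a self-adjoint $H_j$ with $U(s\delta_j)=\exp(2\pi i s H_j)$ and spectral measure $E_j$. It then remains to show three things: (i) $\mathscr D$ is invariant under each $H_j$ and is a core for $H_j$; (ii) $E_1,\dots,E_n$ pairwise commute, so that a joint spectral measure $E$ on $\br^n$ exists; and (iii) the Fourier formula for $U(t)$.

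For (i), I would first note that the substitution $t\mapsto t-s\delta_j$ inside the defining integral gives $U(s\delta_j)\varphi_f=\varphi_{f(\cdot-s\delta_j)}$. Since $f\in C_0^\infty(\br^n)$, the difference quotients $\frac{1}{s}(f(\cdot-s\delta_j)-f)$ converge, as $s\to0$, to $-\partial f/\partial t_j$ uniformly, with all supports (for small $s$) contained in one fixed compact set, hence in $L^1(\br^n)$; together with $\|\varphi_g\|\le\|g\|_{L^1}\|\varphi\|$ this yields $\frac{1}{s}(U(s\delta_j)\varphi_f-\varphi_f)\to-\varphi_{\partial f/\partial t_j}$ in norm. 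By Theorem \ref{thstone0}(c), $\varphi_f\in\mathscr D(H_j)$ and $2\pi i H_j\varphi_f=-\varphi_{\partial f/\partial t_j}$; since $\partial f/\partial t_j\in C_0^\infty(\br^n)$ this vector lies again in $\mathscr D$, so $\mathscr D$ is $H_j$-invariant, and iterating gives $H_jH_k\varphi_f=-(2\pi)^{-2}\varphi_{\partial^2 f/\partial t_j\partial t_k}=H_kH_j\varphi_f$, so the $H_j$ commute on the common invariant domain $\mathscr D$. Density of $\mathscr D$ follows by taking $f$ to run through an approximate identity, since $\|\varphi_{f_\varepsilon}-\varphi\|\le\sup_{\|t\|<\varepsilon}\|U(t)\varphi-\varphi\|\to0$ by strong continuity. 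A dense subspace contained in $\mathscr D(H_j)$ and invariant under $\exp(2\pi i s H_j)$ — and $\mathscr D$ is such, once more by $U(s\delta_j)\varphi_f=\varphi_{f(\cdot-s\delta_j)}$ — is automatically a core for $H_j$ by the standard core criterion \cite{ReSi80}.

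The step I expect to be the main obstacle is (ii): commutativity on a common invariant core does \emph{not} in general force the spectral measures to commute (Nelson's counterexample). What rescues the argument is that $U$ is a genuine unitary representation of the abelian group $\br^n$, so the \emph{bounded} operators $U(a\delta_j)$ and $U(b\delta_k)$ commute for all $a,b\in\br$. Differentiating the identity $U(b\delta_k)U(a\delta_j)=U(a\delta_j)U(b\delta_k)$ in $a$ and applying Theorem \ref{thstone0}(c) shows $U(b\delta_k)H_j\subseteq H_jU(b\delta_k)$, whence Theorem \ref{thspectral}(d) gives that $U(b\delta_k)$ commutes with every spectral projection $E_j(\Delta)$. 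Now $E_j(\Delta)$ is a bounded operator commuting with all $U(b\delta_k)$, $b\in\br$, so the same two steps, with $H_k$ in the role of $A$, show that $E_j(\Delta)$ commutes with $E_k(\Delta')$ for all Borel $\Delta,\Delta'$. Hence $E_1,\dots,E_n$ pairwise commute, and one forms the product spectral measure $E$ on the Borel sets of $\br^n$ by $E(\Delta_1\times\dots\times\Delta_n)=E_1(\Delta_1)\cdots E_n(\Delta_n)$ as in \eqref{eqprE}; this is the joint spectral measure of $(H_1,\dots,H_n)$, which therefore commute in the sense of commuting spectral measures.

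Finally, for (iii), the group law gives $U(t)=U(t_1\delta_1)\cdots U(t_n\delta_n)=\prod_{j=1}^n\int_\br e^{2\pi i t_j x_j}\,dE_j(x_j)$, and since the $E_j$ commute this product equals $\int_{\br^n}e^{2\pi i t\cdot\lambda}\,dE(\lambda)$ by the definition of the product measure $E$. Pairing with $\varphi,\psi\in\mathfrak H$ gives $\langle U(t)\varphi,\psi\rangle=\int_{\br^n}e^{2\pi i t\cdot\lambda}\,d\langle E(\lambda)\varphi,\psi\rangle$, which completes the argument.
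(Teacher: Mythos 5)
The paper does not prove this theorem; it is stated in the Appendix purely as a citation to Reed--Simon \cite[Theorem VIII.12]{ReSi80}, so there is no in-paper argument to compare against. Your proof is correct and is essentially the standard one. The reductions are all sound: the identity $U(s\delta_j)\varphi_f=\varphi_{f(\cdot-s\delta_j)}$, the bound $\|\varphi_g\|\le\|g\|_{L^1}\|\varphi\|$, and Theorem \ref{thstone0}(c) give $2\pi i H_j\varphi_f=-\varphi_{\partial f/\partial t_j}$, hence invariance of $\mathscr D$; density via an approximate identity and the invariant-dense-domain core criterion are exactly the right tools. Most importantly, you correctly isolated the one genuinely delicate point: symmetric-operator commutativity on a common invariant core does not yield commuting spectral measures, and the fix must go through the \emph{bounded} operators $U(b\delta_k)$. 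Your two-pass argument --- first $U(b\delta_k)H_j\subseteq H_jU(b\delta_k)$ via Theorem \ref{thstone0}(c), then Theorem \ref{thspectral}(d) to get $U(b\delta_k)E_j(\Delta)=E_j(\Delta)U(b\delta_k)$, then the same pair of steps with $E_j(\Delta)$ in place of $U(b\delta_k)$ --- is exactly how this is done. The only steps left implicit are routine: that the commuting family $\{E_j\}$ extends from rectangles to a countably additive spectral measure on the Borel sets of $\br^n$, and the Fubini-type product formula $\prod_j\int e^{2\pi it_jx_j}\,dE_j=\int e^{2\pi it\cdot\lambda}\,dE$; both are standard and reasonable to cite.
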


\subsection{Some fundamental inequalities}

Let $\Omega$ be an open subset of $\br^n$, and $l\geq 0$. We denote by $H^l(\Omega)$ the Hilbert space of functions on $\Omega$ that have all weak derivatives up to order $l$ in $L^2(\Omega)$
$$H^l(\Omega)=\left\{ f:\Omega\rightarrow\bc : D^\alpha f \in L^2(\Omega),\mbox{ for }|\alpha|\leq l\}\right\},$$
with the inner product
$$\ip{f}{g}_l=\sum_{|\alpha|\leq l}\int_\Omega D^\alpha f\cdot \cj{D^\alpha g}.$$
(Here $\alpha$ is a multi-index $\alpha=(i_1,\dots,i_k)$ with $i_1,\dots,i_k\in\{1,\dots,n\}$ and $|\alpha|=k$ is its length).

Since functions in $H^l(\Omega)$ can be approximated in $\|\cdot\|_l$-norm by $C^\infty$-functions, by the Meyers-Serrin Theorem, the Hilbert space $H^l(\Omega)$ can be regarded as the completion of the space $C^\infty(\Omega)$ under the inner product $\ip{\cdot}{\cdot}_l$. See \cite[Theorem 3.17. page 67]{AdFo03} or \cite[Theorem 2, page 251]{Ev10}.

We denote by $H_0^l(\Omega)$, the closure of $C_0^\infty(\Omega)$ in $H^l(\Omega)$.

\begin{definition}\label{defnik}
	
	We used in Section \ref{secfu} one of the equivalent forms for the definition of a Nikodym region, see Fuglede \cite[Section 3]{Fug74}, Deny-Lions \cite[p. 328 ff.]{DL45}: a connected open set $\Omega\subset\br^n$ is called a {\it Nikodym region} if $m(\Omega)<\infty$ and there is a finite constant $C=C(\Omega)$ such that, for $u\in L^2(\Omega)$ with $D_1u,\dots, D_nu\in L^2(\Omega)$, 
	
	\begin{equation}
		\left\|u-\frac1{m(\Omega)}\int_{\Omega} u\,dm\right\|^2\leq C(\Omega)\sum_{j=1}^n\|D_ju\|^2.
		\label{eqPoincare}
	\end{equation}
	
	The inequality \eqref{eqPoincare} is the {\it Poincar\' e inequality}, and the smallest possible value of the constant $C(\Omega)$ is called the {\it Poincar\' e constant} for $\Omega$. 
	
	Fuglede mentions in the footnote of page 106 in \cite{Fug74} that it suffice to let $u$ range over $ C^\infty(\Omega)\cap \bigcap_{j=1}^n\mathscr D (D_j)$, and refers to \cite[Chapter II, Section 2]{Fug60}; the Meyers-Serrin Theorem can be used to approximate by $C^\infty(\Omega)$-functions. 
	
	If we apply \eqref{eqPoincare} to $ue_{-\lambda}$, for some $\lambda\in\br^n$, we obtain 
	$$\left\|ue_{-\lambda}-\frac{1}{m(\Omega)}\int_{\Omega}ue_{-\lambda}\,dm\right\|^2\leq C(\Omega)\sum_{j=1}^n\left\|(D_j u)e_{-\lambda}-\lambda_j ue_{-\lambda}\right\|^2.$$
	and this can be rewritten as
	\begin{equation}
		\left\|u-\frac{1}{m(\Omega)}\ip{u}{e_\lambda}e_\lambda\right\|^2\leq C(\Omega) \sum_{j=1}^n\left\|D_ju-\lambda_j u\right\|^2.
		\label{eqPoincare2}
	\end{equation}
	
	Any bounded open connected set with $C^1$ boundary is a Nikodym region \cite[p. 275]{Ev10}; any bounded star-shaped open set is a Nikodym region \cite[p. 332]{DL45}.
	
\end{definition}

\begin{theorem}\label{tha1}(The Second Ehrling-Sobolev Inequality) \cite[Chapter XIV, Section 3; (2.1) page 336, page 337]{Mau67}. 
	Let $\Omega$ be a bounded region in $\br^n$ and let $k\geq 0$ and $m\geq [n/2]+1$. Then there exists a constant, that depends only on $\Omega$, $k$ and $m$, and not on $\varphi$ nor $x$, such that 
	$$|D^\alpha\varphi(x)|\leq C\|\varphi\|_{m+k},\quad (|\alpha|\leq k, x\in\Omega,\varphi\in C_0^\infty(\Omega)).$$

\end{theorem}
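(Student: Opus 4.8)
The plan is to recognize this as the Sobolev embedding $H^m(\br^n)\hookrightarrow L^\infty(\br^n)$ for $m>n/2$, applied to the derivatives $D^\alpha\varphi$ of order $|\alpha|\le k$. First I would observe that the hypothesis $m\ge[n/2]+1$ forces $m>n/2$ in every case (for $n$ even, $m\ge n/2+1$; for $n$ odd, $m\ge(n+1)/2$), hence $2m>n$. This is the only place where the precise assumption on $m$ is used, and it is exactly what is needed for the integral $\int_{\br^n}(1+|\xi|^2)^{-m}\,d\xi=:C_m^2$ to be finite: in polar coordinates the radial integrand behaves like $r^{\,n-1-2m}$ at infinity, which is integrable precisely when $2m>n$. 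Boundedness of $\Omega$ plays no essential role beyond making the statement an ``interior'' estimate: since $\varphi\in C_0^\infty(\Omega)$, I extend it by $0$ to all of $\br^n$ to get $\varphi\in C_0^\infty(\br^n)$, and then every $L^2(\Omega)$-integral in the definition of $\|\varphi\|_{m+k}$ coincides with the corresponding integral over $\br^n$.

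Next, fix $\alpha$ with $|\alpha|\le k$ and set $\psi=D^\alpha\varphi\in C_0^\infty(\br^n)$, so $\widehat\psi$ is Schwartz. By Fourier inversion, for every $x\in\br^n$,
$$|\psi(x)|=\left|\int_{\br^n}\widehat\psi(\xi)\,e^{2\pi i x\cdot\xi}\,d\xi\right|\le\int_{\br^n}|\widehat\psi(\xi)|(1+|\xi|^2)^{m/2}\,(1+|\xi|^2)^{-m/2}\,d\xi,$$
and by the Cauchy--Schwarz inequality the right-hand side is at most
$$\left(\int_{\br^n}|\widehat\psi(\xi)|^2(1+|\xi|^2)^{m}\,d\xi\right)^{1/2}\left(\int_{\br^n}(1+|\xi|^2)^{-m}\,d\xi\right)^{1/2}=C_m\left(\int_{\br^n}|\widehat\psi(\xi)|^2(1+|\xi|^2)^{m}\,d\xi\right)^{1/2}.$$
Expanding $(1+|\xi|^2)^m$ by the multinomial theorem and using $\widehat{D^\beta\psi}(\xi)=(2\pi i)^{|\beta|}\xi^\beta\widehat\psi(\xi)$ together with Plancherel, the last integral is comparable, with a constant depending only on $n$ and $m$, to $\sum_{|\beta|\le m}\|D^\beta\psi\|_{L^2(\br^n)}^2$. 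Hence $|\psi(x)|\le C'_{n,m}\bigl(\sum_{|\beta|\le m}\|D^\beta\psi\|_{L^2(\br^n)}^2\bigr)^{1/2}$, uniformly in $x$.

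Finally I would unwind $\psi=D^\alpha\varphi$: for $|\beta|\le m$ we have $D^\beta\psi=D^{\beta+\alpha}\varphi$ with $|\beta+\alpha|\le m+k$, so
$$\sum_{|\beta|\le m}\|D^\beta\psi\|_{L^2(\br^n)}^2=\sum_{|\beta|\le m}\|D^{\beta+\alpha}\varphi\|_{L^2(\br^n)}^2\le N(n,m,k)\sum_{|\gamma|\le m+k}\|D^\gamma\varphi\|_{L^2(\Omega)}^2=N(n,m,k)\,\|\varphi\|_{m+k}^2,$$
where $N(n,m,k)$ bounds the number of ways a multi-index $\gamma$ of length $\le m+k$ decomposes as $\beta+\alpha$ with $|\beta|\le m$, $|\alpha|\le k$. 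Combining the displays gives $|D^\alpha\varphi(x)|=|\psi(x)|\le C\,\|\varphi\|_{m+k}$ with $C=C(\Omega,k,m)$ (in fact $C$ depends only on $n$, $k$, $m$), independent of $\varphi$ and $x$. The only genuinely substantive point is the convergence of $\int_{\br^n}(1+|\xi|^2)^{-m}\,d\xi$, i.e.\ the sharp Sobolev threshold $m>n/2$ guaranteed by $m\ge[n/2]+1$; everything else is routine bookkeeping with Fourier multipliers and multinomial constants. An alternative, Fourier-free route would go through the Gagliardo--Nirenberg--Sobolev inequality together with an iteration on the order of differentiation, or one could simply invoke the embedding theorems of Adams--Fournier or Evans cited in the appendix, but the Fourier-transform argument above is the most self-contained.
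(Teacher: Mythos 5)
Your proof is correct, but it takes a genuinely different route from the one in the paper. You prove the statement as the Sobolev embedding $H^m(\br^n)\hookrightarrow L^\infty(\br^n)$ for $2m>n$: extend $\varphi$ by zero, apply Fourier inversion and Cauchy--Schwarz against the weight $(1+|\xi|^2)^{-m}$, and convert $\int|\widehat\psi|^2(1+|\xi|^2)^m\,d\xi$ back to derivative norms via Plancherel. The paper instead follows Maurin's Fourier-free argument: it writes $\varphi(x)$ via the Taylor formula with integral remainder along the ray $t\mapsto x+t\xi$, applies Cauchy--Schwarz to the remainder (this is where the same threshold $2m>n$ enters, through the convergence of $\int_0^t\tau^{2m-n-1}\,d\tau$), and then averages over $\xi\in\mathbb S^{n-1}$ and $t\in[0,1]$ in spherical coordinates to dominate $|\varphi(x)|^2$ by $\sum_{|\alpha|\le m}\int_{B(x,1)}|D^\alpha\varphi|^2$. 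Your route is shorter, makes the sharp role of $m>n/2$ completely transparent, and shows the constant actually depends only on $n$, $m$, $k$ (not on $\Omega$); the paper's route is elementary in the sense of using no Fourier analysis, is local in character (only the ball $B(x,1)$ is needed), and stays faithful to the source \cite{Mau67} that the appendix is reproducing. Both are complete proofs of the stated inequality; the only point worth flagging in yours is that the identity $\widehat{D^\beta\psi}(\xi)=(2\pi i)^{|\beta|}\xi^\beta\widehat\psi(\xi)$ presumes $D^\beta$ means the plain partial derivative rather than the normalized $\frac{1}{2\pi i}\partial$ used earlier in the paper, but this only shifts harmless powers of $2\pi$ into the constant.
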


\begin{proof}
	
	Let $\varphi$ be a function in $C_0^\infty(\Omega)$. We can extend $\varphi$ by $0$ outside $\Omega$ so that $\varphi$ is also in $C_0^\infty(\br^n)$. Let $x\in\Omega$ and $\xi\in\br^n$ with $\|\xi\|=1$. We define the function 
	$$\psi(t)=\varphi(x+t\xi),\quad(0\leq t\leq 1),$$
	and we apply the Cauchy Integral Remainder form for the Taylor polynomial of $\varphi$ at $t$, $0\leq t\leq 1$:
	$$\varphi(x)=\psi(0)=\sum_{l=0}^{m-1}\frac{1}{l!}\psi^{(l)}(t)(-t)^l+\frac{1}{(m-1)!}\int_t^0\psi^{(m)}(\tau)(0-\tau)^{m-1}\,d\tau.$$
	Throughout the proof we will need to find upper bounds for quantities, and they will involve various constants; we will incorporate them with notations $C_1,C_2, c_1,c_2,\dots$ etc. The constants depend only on $\Omega$, $m$ and $n$, not on $\varphi$, nor $x$. 
	
	With the Schwarz inequality we obtain 
	$$|\varphi(x)|^2\leq C_1\left(\sum_{l=0}^{m-1}|\psi^{(l)}(t)|^2 t^{2l}+\left|\int_0^t\tau^{m-1}\psi^{(m)}(\tau)\,d\tau\right|^2\right).$$
	
 From the Schwarz inequality 
	$$\left|\int_0^t\tau^{m-1}\psi^{(m)}(\tau)\,d\tau\right|^2=\left|\int_0^t\tau^{m-1-\frac{n-1}2}\tau^{\frac{n-1}{2}}\psi^{(m)}(\tau)\,d\tau\right|^2$$
	$$\leq \int_0^t\tau^{2m-2-n+1}\,d\tau \cdot \int_0^t\tau^{n-1}|\psi^{(m)}(\tau)|^2\,d\tau=\frac{1}{2m-n}t^{2m-n}\cdot \int_0^t\tau^{n-1}|\psi^{(m)}(\tau)|^2\,d\tau.$$
	Therefore, we obtain further, using also the fact that $0\leq t\leq 1$,  
	$$|\varphi(x)|^2\leq C_2\left(\sum_{l=0}^{m-1}|\psi^{(l)}(t)|^2+\int_0^t\tau^{n-1}|\psi^{(m)}(\tau)|^2\,d\tau\right).$$
	
	Note that for $0\leq l\leq m$, 
	$$\psi^{(l)}(t)=\sum_{i_1,i_2,\dots,i_l=1}^n\frac{\partial^l\varphi(x+t\xi)}{\partial x_{i_1}\partial x_{i_2}\dots\partial x_{i_l}}\xi_{i_1}\xi_{i_2}\dots\xi_{i_l}.$$
	Now using the fact that $\|\xi\|=1$ and one more Schwarz inequality, we have 
	$$|\psi^{(l)}(t)|^2\leq c_l\left(\sum_{|\alpha|=l}|D^\alpha\varphi(x+t\xi)|^2\right).$$
	Plugging this into the previous inequality, we get that
	$$|\varphi(x)|^2\leq C_3\left(\sum_{|\alpha|\leq m-1}|D^{\alpha}\varphi(x+t\xi)|^2+\sum_{|\alpha|=m}\int_0^t\tau^{n-1}|D^{\alpha}\varphi(x+\tau\xi)|^2\,d\tau\right).$$
	
	Next we integrate on the unit ball in $\br^n$, and use spherical coordinates $0\leq t\leq 1$, $\xi$ on the sphere $\mathbb S^{n-1}=\{\xi\in\br^n: \|\xi\|=1\}$, and therefore $y=x+t\xi$ covers the ball $B(x,1)=\{y\in \br^n :\|y-x\|\leq1\}$. We denote by $v_n$, the volume of the unit ball, $\sigma_{n-1}$ is the surface measure on the sphere $\mathbb S^{n-1}$. Recall that $t^{n-1}$ is the Jacobian of the change of coordinates. So we multiply the previous inequality by $t^{n-1}$ and then integrate $\int_0^1\int_{\mathbb S^{n-1}}\dots\,d\sigma_{n-1}(\xi)\,dt$.
	\begin{multline*}
	|\varphi(x)|^2v_n\leq C_3\left(\sum_{|\alpha|\leq m-1}\int_{B(x,1)}|D^\alpha\varphi(y)|^2\,dy+\right.\\+\left.\sum_{|\alpha|=m}\int_0^1\int_{\mathbb S^{n-1}}t^{n-1}\int_0^t\tau^{n-1}|D^\alpha\varphi(x+\tau\xi)|^2\,d\tau\,d\sigma_{n-1}(\xi)\,dt\right).
	\end{multline*}
	For the last term we interchange the order of integration and we get that it is equal to 
	$$\int_0^1\int_{\mathbb S^{n-1}}\tau^{n-1}|D^\alpha(x+\tau\xi)|^2\int_\tau^1t^{n-1}\,dt\,d\sigma_{n-1}(\xi)\,d\tau\leq C_4\int_{B(x,1)}|D^{\alpha}\varphi(y)|^2\,dy,$$
	since$\int_\tau^1t^{n-1}\,dt\leq C_4$ for some constant.
	
	Therefore, we have, since $\varphi$ and its derivatives are supported on $\Omega$: 
	$$|\varphi(x)|^2v_n\leq C_5\left(\sum_{|\alpha|\leq m}\int_{B(x,1)}|D^{\alpha}\varphi(y)|^2\,dy	\right)\leq C_5\left(\sum_{|\alpha|\leq m}\int_{\Omega}|D^{\alpha}\varphi(y)|^2\,dy	\right),$$
	and this implies that
	$$|\varphi(x)|\leq C_6\|\varphi\|_m.$$
	Applying this inequality to a derivative $D^{\alpha}\varphi$, we obtain the Ehrling-Sobolev inequality. 
\end{proof}


\subsection{Hilbert-Schmidt operators}

For more on Hilbert-Schmidt operators, the reader can consult \cite[Section VI.6]{ReSi80}.

\begin{definition}\label{defhs}
	An operator $A$ from a Hilbert space $\mathfrak{H}$ to a Hilbert space $\mathfrak{H}'$ is called a {\it Hilbert-Schmidt} operator if for some (and hence any) orthonormal basis $\{e_i\}_{i\in I}$ of $\mathfrak{H}$, 
	$$\sum_{i\in I}\|Ae_i\|^2<\infty.$$
	
\end{definition}

\begin{theorem}
	\label{thhs}
	If $A:\mathfrak H\rightarrow\mathfrak{H}'$ is a Hilbert-Schmidt operator between separable Hilbert spaces, then there exists an orthonormal basis $\{u_n\}_{n\in\bn}$ in $\mathfrak H$ and some vectors $\{v_n\}_{n\in\bn}$ in $\mathfrak H'$ such that $\sum_n\|v_n\|^2<\infty$, and 
	$$Au=\sum_{n\in\bn}\ip{u}{u_n}v_n,\quad(u\in\mathfrak H).$$
\end{theorem}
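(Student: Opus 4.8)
The plan is to observe that the statement imposes \emph{no} orthogonality requirement on the vectors $\{v_n\}$, so there is in fact nothing to construct: any orthonormal basis of $\mathfrak H$ already works. Concretely, I would fix once and for all an arbitrary orthonormal basis $\{u_n\}_{n\in\bn}$ of the separable Hilbert space $\mathfrak H$ (a finite basis, with the obvious modifications, if $\dim\mathfrak H<\infty$) and simply \emph{set} $v_n:=Au_n\in\mathfrak H'$.

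Two verifications then remain. First, $\sum_{n\in\bn}\|v_n\|^2=\sum_{n\in\bn}\|Au_n\|^2<\infty$, which is precisely the Hilbert--Schmidt condition of Definition \ref{defhs} (and, by the remark in that definition, independent of the chosen basis). Second, every $u\in\mathfrak H$ admits the norm-convergent expansion $u=\sum_{n\in\bn}\ip{u}{u_n}u_n$; since a Hilbert--Schmidt operator is in particular bounded, with $\|A\|\le\|A\|_{\mathrm{HS}}$, the operator $A$ is continuous, and applying it term by term to this expansion gives $Au=\sum_{n\in\bn}\ip{u}{u_n}Au_n=\sum_{n\in\bn}\ip{u}{u_n}v_n$ for every $u\in\mathfrak H$. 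That is exactly the asserted formula, so the proof is complete.

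I do not expect a genuine obstacle: the only step needing a word is the legitimacy of passing $A$ inside the infinite sum, and that is taken care of by the continuity of $A$ (itself a routine consequence of the Hilbert--Schmidt estimate; see \cite[Section VI.6]{ReSi80}). Real content would enter only if one wanted the \emph{stronger}, singular-value version in which the $\{v_n\}$ are additionally pairwise orthogonal. In that case the plan would change: one would apply the spectral theorem for compact self-adjoint operators to the positive compact operator $A^{*}A$, take $\{u_n\}$ to be an orthonormal eigenbasis of $A^{*}A$ (with eigenvalues $s_n^{2}\ge 0$, so that $\sum_n s_n^{2}=\sum_n\ip{A^{*}Au_n}{u_n}=\sum_n\|Au_n\|^2=\|A\|_{\mathrm{HS}}^{2}<\infty$), and again set $v_n:=Au_n$; then $\ip{v_n}{v_m}=\ip{A^{*}Au_n}{u_m}=s_n^{2}\,\delta_{nm}$, giving the orthogonal refinement. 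This refinement is not needed for the statement as given.
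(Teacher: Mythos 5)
Your proof is correct, and it is a genuine (if mild) simplification of the paper's argument. The paper first invokes compactness of $A$ and the spectral decomposition of $A^*A$ to produce the orthonormal basis $\{u_n\}$, then sets $v_n:=Au_n$, checks $\sum_n\|v_n\|^2<\infty$ from the Hilbert--Schmidt condition, and shows via Cauchy--Schwarz that the series defines a bounded operator $\tilde A$ agreeing with $A$ on the basis, hence equal to $A$. You correctly observe that the spectral-decomposition step is never actually used: since the theorem asks for no orthogonality among the $v_n$, any orthonormal basis works, and the identity $Au=\sum_n\ip{u}{u_n}v_n$ follows by applying the (bounded) operator $A$ term by term to $u=\sum_n\ip{u}{u_n}u_n$. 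The only substantive content in either version is the summability $\sum_n\|Au_n\|^2<\infty$, which is the definition itself, plus boundedness of $A$ to justify interchanging $A$ with the sum --- a point you address and which the paper handles equivalently by bounding $\tilde A$ directly. What the paper's route buys, as you note, is the stronger singular-value form with pairwise orthogonal $v_n$ (indeed $\ip{v_n}{v_m}=\ip{A^*Au_n}{u_m}=s_n^2\delta_{nm}$), which is not needed here; your closing remark reconstructs exactly that refinement, so nothing is lost. One small caveat applying to both arguments: the definition in the paper says ``operator'' without explicitly requiring boundedness, and both proofs tacitly assume $A$ is bounded (the paper when asserting compactness, you when asserting $\|A\|\le\|A\|_{\mathrm{HS}}$); this is the standard convention and not a gap in context.
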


\begin{proof}[Proof sketch.]
	Hilbert-Schmidt operators are compact, and therefore so is $A^*A$. From the spectral decomposition of $A^*A$ we get an orthonormal basis of eigenvectors $\{u_n\}_{n\in\bn}$ in $\mathfrak H$. Define $v_n:=Au_n$ for all $n\in\bn$. Since $A$ is Hilbert-Schmidt, $M:=\sum_n \|v_n\|^2=\sum_n\|Au_n\|^2<\infty$. Therefore the operator 
	$$\tilde Au=\sum_{n\in\bn}\ip{u}{u_n}v_n$$
	is well defined on $\mathfrak H$ and, by the Schwarz inequality and Parseval's identity,
	$$\|Au\|^2\leq \sum_{n\in\bn}|\ip{u}{u_n}|^2\sum_{n\in\bn}\|v_n\|^2=M\|u\|^2.$$
	This means that $\tilde A$ is a bounded linear operator. Also $\tilde Au_n=v_n=Au_n$ for all $n\in\bn$, so $A$ and $\tilde A$ coincide on an orthonormal basis, and therefore they are identical.  
\end{proof}

\begin{theorem}\label{tha2}\cite[Theorem 2, page 337]{Mau67}
	If $\Omega$ is a bounded open set in $\br^n$ then the embedding 
	$$H_0^{m+k}(\Omega)\rightarrow H_0^k(\Omega), \quad k\geq 0, m\geq [n/2]+1,$$
	is a Hilbert-Schmidt transformation. 
	
\end{theorem}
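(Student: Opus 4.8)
The plan is to obtain the Hilbert--Schmidt estimate as a direct consequence of the Second Ehrling--Sobolev Inequality (Theorem~\ref{tha1}), exploiting that on the bounded set $\Omega$ the Sobolev norm of sufficiently high order controls \emph{pointwise} values of derivatives. The one preliminary step is to extend the inequality of Theorem~\ref{tha1} from $C_0^\infty(\Omega)$ to all of $H_0^{m+k}(\Omega)$: if $\varphi\in H_0^{m+k}(\Omega)$ and $\varphi_j\in C_0^\infty(\Omega)$ with $\varphi_j\to\varphi$ in $H^{m+k}(\Omega)$, then applying Theorem~\ref{tha1} to the differences $\varphi_j-\varphi_l$ shows that $(D^\alpha\varphi_j)_j$ is uniformly Cauchy on $\Omega$ for each $|\alpha|\le k$; hence the weak derivative $D^\alpha\varphi$ has a continuous representative and $|D^\alpha\varphi(x)|\le C\|\varphi\|_{m+k}$ for every $x\in\Omega$ and $|\alpha|\le k$, with the same constant $C$ as in Theorem~\ref{tha1}.

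Next, fix $x\in\Omega$ and a multi-index $\alpha$ with $|\alpha|\le k$. By the previous step the linear functional $\varphi\mapsto D^\alpha\varphi(x)$ is bounded on the Hilbert space $H_0^{m+k}(\Omega)$ (with inner product $\ip{\cdot}{\cdot}_{m+k}$), so the Riesz representation theorem supplies $g_{x,\alpha}\in H_0^{m+k}(\Omega)$ with $D^\alpha\varphi(x)=\ip{\varphi}{g_{x,\alpha}}_{m+k}$ for all $\varphi$ and $\|g_{x,\alpha}\|_{m+k}\le C$. Let $\{u_j\}_j$ be any orthonormal basis of $H_0^{m+k}(\Omega)$. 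Using the definition of the $H^k(\Omega)$-norm, Tonelli's theorem to interchange $\sum_j$ with $\int_\Omega$, and Parseval's identity in $H_0^{m+k}(\Omega)$,
\begin{align*}
\sum_j\|u_j\|_{H^k(\Omega)}^2
&=\sum_{|\alpha|\le k}\int_\Omega\sum_j\bigl|D^\alpha u_j(x)\bigr|^2\,dx
=\sum_{|\alpha|\le k}\int_\Omega\sum_j\bigl|\ip{u_j}{g_{x,\alpha}}_{m+k}\bigr|^2\,dx\\
&=\sum_{|\alpha|\le k}\int_\Omega\|g_{x,\alpha}\|_{m+k}^2\,dx\le C^2\,N\,m(\Omega)<\infty,
\end{align*}
where $N$ is the finite number of multi-indices of length $\le k$, and $m(\Omega)<\infty$ because $\Omega$ is bounded. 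By Definition~\ref{defhs} this is exactly the assertion that the inclusion $H_0^{m+k}(\Omega)\hookrightarrow H_0^k(\Omega)$ is Hilbert--Schmidt.

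I do not anticipate a real obstacle; the only points needing care are the uniform-Cauchy/density argument in the first paragraph (so that pointwise evaluation of $D^\alpha\varphi$ is legitimate for $\varphi\in H_0^{m+k}(\Omega)$) and the measurability of $x\mapsto\sum_j|D^\alpha u_j(x)|^2$ used for Tonelli, which holds since each $D^\alpha u_j$ is continuous by the first step. If one prefers a route independent of Theorem~\ref{tha1}: enclose $\Omega$ in a cube, extend functions by zero to realize $H_0^{m+k}(\Omega)$ isometrically as a closed subspace of $H^{m+k}(\bt^n)$, observe that the restriction of a Hilbert--Schmidt operator to a closed subspace (with target a closed subspace containing the image) is again Hilbert--Schmidt, and compute the Hilbert--Schmidt norm of $H^{m+k}(\bt^n)\to H^k(\bt^n)$ in the Fourier basis; up to the equivalence of the $H^l$-norm with its symbol $(1+|\xi|^2)^l$, it equals a constant times $\sum_{\xi\in\mathbb{Z}^n}(1+|\xi|^2)^{-m}$, which converges precisely because $2m>n$, i.e.\ $m\ge[n/2]+1$.
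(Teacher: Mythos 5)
Your argument is correct and is essentially the paper's own proof: both represent the point-evaluation functionals $\varphi\mapsto D^\alpha\varphi(x)$ via Riesz vectors $b_x^\alpha$ of norm at most the Ehrling--Sobolev constant $C$, then sum $\|e_i\|_k^2$ over an orthonormal basis using Parseval and Tonelli to get the bound $C^2\cdot\#\{\alpha:|\alpha|\le k\}\cdot m(\Omega)$. Your added care in extending Theorem~\ref{tha1} from $C_0^\infty(\Omega)$ to $H_0^{m+k}(\Omega)$ by a uniform-Cauchy argument, and the alternative Fourier-series computation on the torus, are both sound but do not change the substance of the argument.
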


\begin{proof}
	From the Ehrling-Sobolev inequality in Theorem \ref{tha1}, it follows that, for every $|\alpha|\leq k$ and every $x\in\Omega$,  the linear functional $H_0^{m+k}(\Omega)\ni\varphi \rightarrow D^\alpha\varphi(x)\in\bc$ is bounded with a common bound $C$, therefore there exists a vector $b_x^\alpha\in H_0^{m+k}(\Omega)$ with $\|b_x^\alpha\|_{m+k}\leq C$, such that 
	$$D^\alpha\varphi(x)=\ip{\varphi}{b_x^\alpha}_{m+k}.$$
	
	Now let $\{e_i\}$, $i=1,2,\dots$, be an orthonormal basis for $H_0^{m+k}(\Omega)$. To obtain the conclusion of the theorem, we have to show that 
	$$\sum_{i=1}^\infty\|e_i\|_k^2<\infty.$$ 
	We have 
	$$\sum_{i=1}^\infty\|e_i\|_k^2=\sum_{i=1}^\infty\sum_{|\alpha|\leq k}\int_\Omega|D^\alpha e_i(x)|^2\,dx.$$
	However, we have 
	$$C^2\geq \|b_x^\alpha\|_{m+k}^2=\sum_{i=1}^\infty |\ip{b_x^\alpha}{e_i}_{m+k}|^2=\sum_{i=1}^\infty|D^\alpha e_i(x)|^2.$$
	Therefore, we can interchange the sums and the integral and obtain 
	
	$$\sum_{i=1}^\infty\|e_i\|_k^2=\int_{\Omega}\sum_{|\alpha|\leq k}\sum_{i=1}^\infty |D^\alpha e_i(x)|^2\,dx=\int_\Omega\sum_{|\alpha|\leq k}\|b_x^\alpha\|_{m+k}^2\,dx\leq m(\Omega)\cdot\#\{\alpha : |\alpha|\leq k\}\cdot C^2<\infty.$$
	
\end{proof}

\subsection{A stronger form of the Spectral Theorem}
\begin{theorem}\label{thmf}\cite[The Fundamental Theorem, page 334]{Mau67}, \cite{Mau61} In the context of the Complete Spectral Theorem \ref{thcst}, let $\mathfrak{H}_j$, $j=1,2,\dots,$ be linear subspaces of a Hilbert space $\mathfrak{H}$, and suppose that each $\mathfrak{H}_j$ has a pre-Hilbert space structure by means of which the identity embeddings 
$$i_j:\mathfrak H_j\rightarrow \mathfrak{H}$$
are Hilbert-Schmidt transformations. 

Then, for $\mu$-almost every $\lambda\in\Lambda$, the restriction of the transform $\mathcal F$ to $\mathfrak{H}_j$, at the point $\lambda$:
$$\mathcal F_j(\lambda):\mathfrak{H}_j\ni\varphi\rightarrow\hat\varphi(\lambda)\in\hat{\mathfrak{H}}(\lambda)$$
is a Hilbert-Schmidt transformations and therefore continuous, for all $j=1,2\dots$.

Suppose in addition that the inductive limit 
$$\Phi=\operatorname*{lim ind}_{j}\mathfrak{H}_k$$
is dense in $\mathfrak{H}$. Then the transform $\mathcal F\varphi=\hat\varphi$ is given on $\Phi$ by the formula
\begin{equation}
	\label{eqf7}
	\Phi\ni\varphi\rightarrow\ip{\hat\varphi(\lambda)}{y_k(\lambda)}=:\hat\varphi_k(\lambda)=(\varphi,e_k(\lambda)),\quad (k=1,\dots,\dim\hat{\mathfrak H}(\lambda)),
\end{equation}
where $e_k(\lambda)$ are continuous linear functionals on $\Phi$; if $A_\beta\Phi\subset\Phi$, then 
\begin{equation}
	\label{eqf8}
	(A_\beta\varphi,e_k(\lambda))=(\varphi,\hat A_\beta(\lambda) e_k(\lambda)),
\end{equation}
$\mu$-almost everywhere.

\end{theorem}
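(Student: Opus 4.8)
The plan is to deduce everything from the structure theorem for Hilbert--Schmidt operators (Theorem \ref{thhs}) applied to the compositions $\mathcal F\circ i_j$, and then to disintegrate the resulting identities in $\hat{\mathfrak H}$ into pointwise identities on the fibers $\hat{\mathfrak H}(\lambda)$. First I would fix $j$ and note that, since $i_j:\mathfrak H_j\to\mathfrak H$ is Hilbert--Schmidt and $\mathcal F:\mathfrak H\to\hat{\mathfrak H}$ is unitary, $\mathcal F i_j:\mathfrak H_j\to\hat{\mathfrak H}$ is Hilbert--Schmidt; in particular $\mathfrak H_j$ is separable. By Theorem \ref{thhs} there are an orthonormal basis $\{u^{(j)}_n\}_n$ of $\mathfrak H_j$ and vectors $w^{(j)}_n:=\widehat{u^{(j)}_n}\in\hat{\mathfrak H}$ with $\sum_n\|w^{(j)}_n\|^2<\infty$ such that $\widehat{i_j\varphi}=\sum_n\langle\varphi,u^{(j)}_n\rangle_j\,w^{(j)}_n$ for all $\varphi\in\mathfrak H_j$. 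Choosing measurable vector fields $w^{(j)}_n(\lambda)\in\hat{\mathfrak H}(\lambda)$ representing the $w^{(j)}_n$ and applying Tonelli's theorem to $\sum_n\int_\Lambda\|w^{(j)}_n(\lambda)\|^2\,d\mu$, one gets a co-null set $\Lambda_j$ on which $\sum_n\|w^{(j)}_n(\lambda)\|^2<\infty$; on $\Lambda_j$ I would then \emph{define}
$$\mathcal F_j(\lambda)\varphi:=\sum_n\langle\varphi,u^{(j)}_n\rangle_j\,w^{(j)}_n(\lambda),\qquad\varphi\in\mathfrak H_j,$$
which (by Cauchy--Schwarz) converges absolutely in $\hat{\mathfrak H}(\lambda)$ and is a Hilbert--Schmidt transformation $\mathfrak H_j\to\hat{\mathfrak H}(\lambda)$ with $\|\mathcal F_j(\lambda)\|_{HS}^2=\sum_n\|w^{(j)}_n(\lambda)\|^2$.

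The next step is to check that $\mathcal F_j(\lambda)$ is a version of the restricted transform, i.e.\ that $\hat\varphi(\lambda)=\mathcal F_j(\lambda)\varphi$ for $\mu$-a.e.\ $\lambda$ when $\varphi\in\mathfrak H_j$. For fixed $\varphi$ the partial sums of $\sum_n\langle\varphi,u^{(j)}_n\rangle_j w^{(j)}_n$ converge to $\widehat{i_j\varphi}$ in $\hat{\mathfrak H}$ and, by the Cauchy--Schwarz tail estimate (using $\sum_n|\langle\varphi,u^{(j)}_n\rangle_j|^2=\|\varphi\|_j^2<\infty$), converge pointwise on $\Lambda_j$ to $\mathcal F_j(\lambda)\varphi$; comparing the two limits, and using that norm convergence in $\hat{\mathfrak H}$ forces a.e.\ convergence of a subsequence, gives the claim. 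Hence, after fixing the representatives of $\hat\varphi$ so that $\hat\varphi(\lambda)=\mathcal F_j(\lambda)\varphi$, the map $\varphi\mapsto\hat\varphi(\lambda)$ is, for every $\lambda\in\Lambda_j$, the Hilbert--Schmidt transformation $\mathcal F_j(\lambda)$, which is the first assertion.

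For the second part I would glue the $\mathcal F_j(\lambda)$ across $j$. Assuming $\Phi=\operatorname*{lim ind}_j\mathfrak H_j$ is dense (so the connecting maps $\mathfrak H_j\hookrightarrow\mathfrak H_{j'}$, $j\le j'$, are continuous), for $\varphi$ in a countable dense subset of $\mathfrak H_j$ both $\mathcal F_j(\cdot)\varphi$ and $\mathcal F_{j'}(\cdot)\varphi$ represent $\hat\varphi\in\hat{\mathfrak H}$, hence coincide a.e.; continuity of $\mathcal F_{j'}(\lambda)$ then gives $\mathcal F_{j'}(\lambda)\big|_{\mathfrak H_j}=\mathcal F_j(\lambda)$ off a null set, and intersecting over the countably many pairs $j\le j'$ yields a co-null set $\Lambda_\infty$ on which the $\mathcal F_j(\lambda)$ are all compatible. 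They thus assemble into one linear map on $\Phi$, continuous on each $\mathfrak H_j$ and hence for the inductive-limit topology; for $\lambda\in\Lambda_\infty$ I would set $(\varphi,e_k(\lambda)):=\langle\mathcal F_j(\lambda)\varphi,y_k(\lambda)\rangle$ for $\varphi\in\mathfrak H_j$, which defines continuous linear functionals $e_k(\lambda)$ on $\Phi$ with $\hat\varphi_k(\lambda)=(\varphi,e_k(\lambda))$ a.e., i.e.\ \eqref{eqf7}. Finally, if $A_\beta\Phi\subset\Phi$, then for $\varphi\in\Phi$ (so $\varphi,A_\beta\varphi$ lie in a common $\mathfrak H_j$) formula \eqref{eqst3} of Theorem \ref{thcst} gives $(\mathcal F(A_\beta\varphi))_k(\lambda)=\hat A_\beta(\lambda)(\mathcal F\varphi)_k(\lambda)$; since $A_\beta$ is self-adjoint, $\hat A_\beta(\lambda)$ is real by \eqref{eqst4}, and combining with \eqref{eqf7} yields $(A_\beta\varphi,e_k(\lambda))=(\varphi,\hat A_\beta(\lambda)e_k(\lambda))$ a.e., which is \eqref{eqf8}.

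The hard part, deserving care in the write-up, is the disintegration carried out in the two middle paragraphs: upgrading identities valid in $\hat{\mathfrak H}$ to genuine pointwise identities on the fibers $\hat{\mathfrak H}(\lambda)$, with the exceptional null sets chosen uniformly in $\varphi$. This is precisely where separability of each $\mathfrak H_j$, continuity of the fiberwise operators $\mathcal F_j(\lambda)$, and the Tonelli step applied to $\sum_n\|w^{(j)}_n(\lambda)\|^2$ all get used; everything else is routine bookkeeping around Theorems \ref{thcst} and \ref{thhs}.
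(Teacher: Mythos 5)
Your proposal is correct and follows essentially the same route as the paper's proof: apply the Hilbert--Schmidt structure theorem (Theorem \ref{thhs}) to $\mathcal F\circ i_j$, use Fubini/Tonelli on $\sum_i\|\hat h_i^j(\lambda)\|^2$ to get fiberwise Hilbert--Schmidt maps off a null set, and then pass to the inductive limit to obtain the continuous functionals $e_k(\lambda)$, with \eqref{eqf8} coming from \eqref{eqst3}. Your write-up is in fact somewhat more careful than the paper's on the measure-theoretic bookkeeping (identifying the pointwise sum with the a.e.\ representative of $\hat\varphi$, and gluing the $\mathcal F_j(\lambda)$ compatibly across $j$ via countable dense subsets), but these are refinements of the same argument rather than a different one.
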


\begin{proof}
	
	Since the inclusion $i_j$ is a Hilbert-Schmidt Transformation, by Theorem \ref{thhs}, there exists an orthonormal basis $(\eta_i^j)_{i\in I}$ for the completion $\cj{\mathfrak H}_j$, and a family of vectors $(h_i)^j_{i\in I}$ in $\mathfrak H$ with 
	$$\sum_{i\in I}\|h_i^j\|^2<\infty,$$
	such that 
	$$\mathfrak H_j\ni\varphi\rightarrow i_j\varphi=\sum_{i\in I}\ip{\varphi}{\eta_i^j}_{\cj{\mathfrak H}_j}h_i^j\in\mathfrak{H}.$$
	
	The map $\mathcal F$ in the Compete Spectral Theorem is unitary, therefore the composition $\mathcal F_j$:
	
	\begin{equation}\label{eqf10}
	\mathfrak H_j\ni\varphi\rightarrow\hat\varphi=\mathcal F_j\varphi=\sum_{i\in I}\ip{\varphi}{\eta_i^j}_{\cj{\mathfrak{H}}_j}\hat h_i^j\in\hat{\mathfrak{H}},
	\end{equation}
	
	is Hilbert-Schmidt, with 
\begin{equation}\label{eqf11}
	\sum_{i\in I}\|\hat h_i^j\|_{\hat{\mathfrak{H}}}^2=\sum_{i\in I}\| h_i^j\|_{\mathfrak{H}}^2<\infty.
\end{equation}
	
	Note that, from \eqref{eqf10}, we obtain the pointwise equality:  
	$$\mathcal F_j(\lambda)\varphi=\hat\varphi(\lambda)=\sum_{i\in I}\ip{\varphi}{\eta_i^j}_{\cj{\mathfrak H}_j}\hat h_i^j(\lambda),$$
	provided all the convergence requirements are satisfied. This will be true, if we prove that 
	\begin{equation}
		\label{eqf12}
		\sum_{i\in I}\|\hat h_i^j(\lambda)\|_{\hat{\mathfrak H}(\lambda)}^2<\infty\mbox{ for $\mu$-almost every $\lambda\in\Lambda$.}
	\end{equation}
	
	This follows immediately from Fubini's theorem and \eqref{eqf11}:
	
	$$\infty>\sum_{i\in I}\|\hat h_i^j\|_{\hat{\mathfrak{H}}}^2=\sum_{i\in I}\int_\Lambda\|\hat h_i^j(\lambda)\|_{\hat{\mathfrak{H}}(\lambda)}^2\,d\mu(\lambda)=\int_\Lambda\sum_{i\in I}\|\hat h_i^j(\lambda)\|_{\hat{\mathfrak H}(\lambda)}^2\,d\mu(\lambda),$$
			and hence there exists a set $N_j\subset\Lambda$ with $\mu(N_j)=0$, such that $\sum_i\|\hat h_i^j(\lambda)\|_{\hat{\mathfrak{H}}(\lambda)}^2<\infty$ for $\lambda\not\in N_j$. In conclusion, \eqref{eqf12} holds for $\lambda$ outside the measure zero set $\cup_j N_j$.

			Since $\Phi$ is the inductive limit of the spaces $\mathfrak H_j$, and all the maps $\mathcal F_j(\lambda)$ are continuous, it follows that the map $\mathcal F(\lambda):\Phi\ni\varphi\rightarrow\hat\varphi(\lambda)\in\hat{\mathfrak{H}}(\lambda)$ is continuous. This implies that the map $\Phi\ni\varphi\rightarrow\ip{\hat\varphi}{y_k(\lambda)}=\hat\varphi_k(\lambda)$ is a continuous linear functional $e_k(\lambda)$, so 
			$$\hat\varphi_k(\lambda)=(\varphi,e_k(\lambda)),\quad (\varphi\in\Phi).$$
			The equation \eqref{eqf8} follows easily from \eqref{eqst3}.
\end{proof}


\begin{acknowledgements} The authors are pleased to acknowledge multiple discussions (both recent and past), help and inspiration from teachers, colleagues, and collaborators, past and present. The full list is too long to include here. But relating to more recent collaborations, we offer a partial list: we wish to especially thank Steen Pedersen, Alex Iosevich, Nir Lev, Alexandru Tamasan and Feng Tian. And in the case of the second named author, I. Segal, B. Fuglede, David Shale, Jonn Herr, Ka-Sing Lau, Bob Strichartz, Eric Weber, and Daniel Alpay.
\end{acknowledgements}

\bibliography{eframes}

\begin{thebibliography}{CDLW24}

\bibitem[AF03]{AdFo03}
Robert~A. Adams and John J.~F. Fournier.
\newblock {\em Sobolev spaces}, volume 140 of {\em Pure and Applied Mathematics
  (Amsterdam)}.
\newblock Elsevier/Academic Press, Amsterdam, second edition, 2003.

\bibitem[CDLW24]{CDL24}
Yong-Shen Cao, Qi-Rong Deng, Ming-Tian Li, and Sha Wu.
\newblock A note on the spectrality of {M}oran-type {B}ernoulli convolutions by
  {D}eng and {L}i.
\newblock {\em Bull. Malays. Math. Sci. Soc.}, 47(4):Paper No. 127, 14, 2024.

\bibitem[Con90]{Con90}
John~B. Conway.
\newblock {\em A course in functional analysis}, volume~96 of {\em Graduate
  Texts in Mathematics}.
\newblock Springer-Verlag, New York, second edition, 1990.

\bibitem[DHJ09]{DHJ09}
Dorin~Ervin Dutkay, Deguang Han, and Palle E.~T. Jorgensen.
\newblock Orthogonal exponentials, translations, and {B}ohr completions.
\newblock {\em J. Funct. Anal.}, 257(9):2999--3019, 2009.

\bibitem[DHJP13]{DHJ13}
Dorin~Ervin Dutkay, Deguang Han, Palle E.~T. Jorgensen, and Gabriel Picioroaga.
\newblock On common fundamental domains.
\newblock {\em Adv. Math.}, 239:109--127, 2013.

\bibitem[DJ07a]{DJ07b}
Dorin~Ervin Dutkay and Palle E.~T. Jorgensen.
\newblock Analysis of orthogonality and of orbits in affine iterated function
  systems.
\newblock {\em Math. Z.}, 256(4):801--823, 2007.

\bibitem[DJ07b]{DJ07a}
Dorin~Ervin Dutkay and Palle E.~T. Jorgensen.
\newblock Fourier frequencies in affine iterated function systems.
\newblock {\em J. Funct. Anal.}, 247(1):110--137, 2007.

\bibitem[DJ08]{DJ08}
Dorin~Ervin Dutkay and Palle E.~T. Jorgensen.
\newblock Fourier series on fractals: a parallel with wavelet theory.
\newblock In {\em Radon transforms, geometry, and wavelets}, volume 464 of {\em
  Contemp. Math.}, pages 75--101. Amer. Math. Soc., Providence, RI, 2008.

\bibitem[DJ12a]{DJ12b}
Dorin~Ervin Dutkay and Palle E.~T. Jorgensen.
\newblock Fourier duality for fractal measures with affine scales.
\newblock {\em Math. Comp.}, 81(280):2253--2273, 2012.

\bibitem[DJ12b]{DJ12a}
Dorin~Ervin Dutkay and Palle E.~T. Jorgensen.
\newblock Spectral measures and {C}untz algebras.
\newblock {\em Math. Comp.}, 81(280):2275--2301, 2012.

\bibitem[DJ13a]{DJ13a}
Dorin~Ervin Dutkay and Palle E.~T. Jorgensen.
\newblock Isospectral measures.
\newblock {\em Rocky Mountain J. Math.}, 43(5):1497--1512, 2013.

\bibitem[DJ13b]{DJ13b}
Dorin~Ervin Dutkay and Palle E.~T. Jorgensen.
\newblock On the universal tiling conjecture in dimension one.
\newblock {\em J. Fourier Anal. Appl.}, 19(3):467--477, 2013.

\bibitem[DJ15a]{DJ15a}
Dorin~Ervin Dutkay and Palle E.~T. Jorgensen.
\newblock Spectra of measures and wandering vectors.
\newblock {\em Proc. Amer. Math. Soc.}, 143(6):2403--2410, 2015.

\bibitem[DJ15b]{DJ15b}
Dorin~Ervin Dutkay and Palle E.~T. Jorgensen.
\newblock Unitary groups and spectral sets.
\newblock {\em J. Funct. Anal.}, 268(8):2102--2141, 2015.

\bibitem[DJ23]{DJ23}
Dorin~Ervin Dutkay and Palle E.~T. Jorgensen.
\newblock The momentum operator on a union of intervals and the {F}uglede
  conjecture.
\newblock {\em Sampl. Theory Signal Process. Data Anal.}, 21(2):Paper No. 34,
  25, 2023.

\bibitem[DL45]{DL45}
J.~Deny and J.~L. Lions.
\newblock Les espaces du type de {B}eppo {L}evi.
\newblock {\em Ann. Inst. Fourier (Grenoble)}, 21:305--370 (1955), 1945.

\bibitem[DL21]{DL19}
Alberto Debernardi and Nir Lev.
\newblock On {R}iesz bases of exponentials for convex polytopes with symmetric
  faces.
\newblock In {\em Extended abstracts fall 2019---spaces of analytic functions:
  approximation, interpolation, sampling}, volume~12 of {\em Trends Math. Res.
  Perspect. CRM Barc.}, pages 69--72. Birkh\"{a}user/Springer, Cham, [2021]
  \copyright 2021.

\bibitem[DS88]{DS88}
Nelson Dunford and Jacob~T. Schwartz.
\newblock {\em Linear operators. {P}art {II}}.
\newblock Wiley Classics Library. John Wiley \& Sons, Inc., New York, 1988.
\newblock Spectral theory. Selfadjoint operators in Hilbert space, With the
  assistance of William G. Bade and Robert G. Bartle, Reprint of the 1963
  original, A Wiley-Interscience Publication.

\bibitem[EL23]{EL23}
Mark~Mordechai Etkind and Nir Lev.
\newblock Functions tiling simultaneously with two arithmetic progressions.
\newblock {\em Proc. Lond. Math. Soc. (3)}, 127(6):1775--1815, 2023.

\bibitem[Eva10]{Ev10}
Lawrence~C. Evans.
\newblock {\em Partial differential equations}, volume~19 of {\em Graduate
  Studies in Mathematics}.
\newblock American Mathematical Society, Providence, RI, second edition, 2010.

\bibitem[FMM06]{FMM06}
B\'{a}lint Farkas, M\'{a}t\'{e} Matolcsi, and P\'{e}ter M\'{o}ra.
\newblock On {F}uglede's conjecture and the existence of universal spectra.
\newblock {\em J. Fourier Anal. Appl.}, 12(5):483--494, 2006.

\bibitem[Fug60]{Fug60}
Bent Fuglede.
\newblock {\em Extremal length and closed extensions of partial differential
  operators}.
\newblock Jul. Gjellerups Boghandel, Copenhagen, 1960.

\bibitem[Fug74]{Fug74}
Bent Fuglede.
\newblock Commuting self-adjoint partial differential operators and a group
  theoretic problem.
\newblock {\em J. Functional Analysis}, 16:101--121, 1974.

\bibitem[GL20]{GL20}
Rachel Greenfeld and Nir Lev.
\newblock Spectrality of product domains and {F}uglede's conjecture for convex
  polytopes.
\newblock {\em J. Anal. Math.}, 140(2):409--441, 2020.

\bibitem[HJW18]{HJW18}
John~E. Herr, Palle E.~T. Jorgensen, and Eric~S. Weber.
\newblock A matrix characterization of boundary representations of positive
  matrices in the {H}ardy space.
\newblock In {\em Frames and harmonic analysis}, volume 706 of {\em Contemp.
  Math.}, pages 255--270. Amer. Math. Soc., [Providence], RI, [2018] \copyright
  2018.

\bibitem[HJW19a]{HJW19b}
John~E. Herr, Palle E.~T. Jorgensen, and Eric~S. Weber.
\newblock A characterization of boundary representations of positive matrices
  in the {H}ardy space via the {A}bel product.
\newblock {\em Linear Algebra Appl.}, 576:51--66, 2019.

\bibitem[HJW19b]{HJW19a}
John~E. Herr, Palle E.~T. Jorgensen, and Eric~S. Weber.
\newblock Positive matrices in the {H}ardy space with prescribed boundary
  representations via the {K}aczmarz algorithm.
\newblock {\em J. Anal. Math.}, 138(1):209--234, 2019.

\bibitem[HJW20]{HJW20}
John~E. Herr, Palle E.~T. Jorgensen, and Eric~S. Weber.
\newblock Harmonic analysis of fractal measures: basis and frame algorithms for
  fractal {$L^2$}-spaces, and boundary representations as closed subspaces of
  the {H}ardy space.
\newblock In {\em Analysis, probability and mathematical physics on fractals},
  volume~5 of {\em Fractals Dyn. Math. Sci. Arts Theory Appl.}, pages 163--221.
  World Sci. Publ., Hackensack, NJ, [2020] \copyright 2020.

\bibitem[HJW23]{HJW23}
John~E. Herr, Palle E.~T. Jorgensen, and Eric~S. Weber.
\newblock Fourier series for fractals in two dimensions.
\newblock In {\em From classical analysis to analysis on fractals. {V}ol. 1.
  {A} tribute to {R}obert {S}trichartz}, Appl. Numer. Harmon. Anal., pages
  183--229. Birkh\"{a}user/Springer, Cham, [2023] \copyright 2023.

\bibitem[IK13]{IK13}
Alex Iosevich and Mihal~N. Kolountzakis.
\newblock Periodicity of the spectrum in dimension one.
\newblock {\em Anal. PDE}, 6(4):819--827, 2013.

\bibitem[IKP99]{IKP99}
Alex Iosevich, Nets Katz, and Steen Pedersen.
\newblock Fourier bases and a distance problem of {E}rd\"{o}s.
\newblock {\em Math. Res. Lett.}, 6(2):251--255, 1999.

\bibitem[IMP17]{IMP17}
Alex Iosevich, Azita Mayeli, and Jonathan Pakianathan.
\newblock The {F}uglede conjecture holds in {$\Bbb{Z}_p\times\Bbb{Z}_p$}.
\newblock {\em Anal. PDE}, 10(4):757--764, 2017.

\bibitem[IR03]{IR03}
Alex Iosevich and Mischa Rudnev.
\newblock A combinatorial approach to orthogonal exponentials.
\newblock {\em Int. Math. Res. Not.}, (50):2671--2685, 2003.

\bibitem[Jor91]{Jor91}
Palle E.~T. Jorgensen.
\newblock Integral representations for locally defined positive definite
  functions on {L}ie groups.
\newblock {\em Internat. J. Math.}, 2(3):257--286, 1991.

\bibitem[Jor18]{Jor18}
Palle E.~T. Jorgensen.
\newblock {\em Harmonic analysis}, volume 128 of {\em CBMS Regional Conference
  Series in Mathematics}.
\newblock American Mathematical Society, Providence, RI, 2018.
\newblock Smooth and non-smooth, Published for the Conference Board of the
  Mathematical Sciences.

\bibitem[JP91]{JP91}
Palle E.~T. Jorgensen and Steen Pedersen.
\newblock An algebraic spectral problem for {$L^2(\Omega),\ \Omega\subset{\bf
  R}^n$}.
\newblock {\em C. R. Acad. Sci. Paris S\'{e}r. I Math.}, 312(7):495--498, 1991.

\bibitem[JP92]{JP92}
Palle E.~T. Jorgensen and Steen Pedersen.
\newblock Spectral theory for {B}orel sets in {${\bf R}^n$} of finite measure.
\newblock {\em J. Funct. Anal.}, 107(1):72--104, 1992.

\bibitem[JP93]{JP93}
Palle E.~T. Jorgensen and Steen Pedersen.
\newblock Group-theoretic and geometric properties of multivariable {F}ourier
  series.
\newblock {\em Exposition. Math.}, 11(4):309--329, 1993.

\bibitem[JP98a]{JP98a}
Palle E.~T. Jorgensen and Steen Pedersen.
\newblock Dense analytic subspaces in fractal {$L^2$}-spaces.
\newblock {\em J. Anal. Math.}, 75:185--228, 1998.

\bibitem[JP98b]{JP98b}
Palle E.~T. Jorgensen and Steen Pedersen.
\newblock Orthogonal harmonic analysis of fractal measures.
\newblock {\em Electron. Res. Announc. Amer. Math. Soc.}, 4:35--42, 1998.

\bibitem[JP99]{JP99}
Palle E.~T. Jorgensen and Steen Pedersen.
\newblock Spectral pairs in {C}artesian coordinates.
\newblock {\em J. Fourier Anal. Appl.}, 5(4):285--302, 1999.

\bibitem[JPT15]{JPT15}
Palle Jorgensen, Steen Pedersen, and Feng Tian.
\newblock Spectral theory of multiple intervals.
\newblock {\em Trans. Amer. Math. Soc.}, 367(3):1671--1735, 2015.

\bibitem[Jr76]{Jor76}
Palle~T. J\o~rgensen.
\newblock Perturbation and analytic continuation of group representations.
\newblock {\em Bull. Amer. Math. Soc.}, 82(6):921--924, 1976.

\bibitem[Jr82]{Jor82}
Palle E.~T. J\o~rgensen.
\newblock Spectral theory of finite volume domains in {${\bf R}^{n}$}.
\newblock {\em Adv. in Math.}, 44(2):105--120, 1982.

\bibitem[KL16]{KL16}
Mihail~N. Kolountzakis and Nir Lev.
\newblock On non-periodic tilings of the real line by a function.
\newblock {\em Int. Math. Res. Not. IMRN}, (15):4588--4601, 2016.

\bibitem[Lev22]{Lev22}
Nir Lev.
\newblock An example concerning {F}ourier analytic criteria for translational
  tiling.
\newblock {\em Rev. Mat. Iberoam.}, 38(6):1975--1991, 2022.

\bibitem[Liu24]{Liu24}
Zong-Sheng Liu.
\newblock Spectrality of homogeneous {M}oran measures on the plane.
\newblock {\em Chaos Solitons Fractals}, 183:Paper No. 114926, 17, 2024.

\bibitem[LM22]{LM22}
Nir Lev and M\'{a}t\'{e} Matolcsi.
\newblock The {F}uglede conjecture for convex domains is true in all
  dimensions.
\newblock {\em Acta Math.}, 228(2):385--420, 2022.

\bibitem[LMW24a]{LJW24}
Wenxia Li, Jun~Jie Miao, and Zhiqiang Wang.
\newblock Spectrality of infinite convolutions and random convolutions.
\newblock {\em J. Funct. Anal.}, 287(7):Paper No. 110539, 35, 2024.

\bibitem[LMW24b]{LMW24}
Wenxia Li, Jun~Jie Miao, and Zhiqiang Wang.
\newblock Spectrality of random convolutions generated by finitely many
  {H}adamard triples.
\newblock {\em Nonlinearity}, 37(1):Paper No. 015003, 21, 2024.

\bibitem[LO17]{LO17}
Nir Lev and Alexander Olevskii.
\newblock Fourier quasicrystals and discreteness of the diffraction spectrum.
\newblock {\em Adv. Math.}, 315:1--26, 2017.

\bibitem[LW24]{LW24}
Wenxia Li and Zhiqiang Wang.
\newblock The spectrality of infinite convolutions in {$\Bbb R^d$}.
\newblock {\em J. Fourier Anal. Appl.}, 30(3):Paper No. 35, 27, 2024.

\bibitem[LZ24]{LZ24}
Qian Li and Min-Min Zhang.
\newblock Spectrality of planar {M}oran-{S}ierpinski-type measures.
\newblock {\em Rev. Un. Mat. Argentina}, 67(1):65--80, 2024.

\bibitem[Mac61]{Mac60}
George~W. Mackey.
\newblock Induced representations and normal subgroups.
\newblock In {\em Proc. {I}nternat. {S}ympos. {L}inear {S}paces ({J}erusalem,
  1960)}, pages 319--326. Jerusalem Academic Press, Jerusalem, 1961.

\bibitem[Mac62]{Mac62}
George~W. Mackey.
\newblock A remark on systems of imprimitivity.
\newblock {\em Math. Ann.}, 145:50--51, 1961/62.

\bibitem[Mau61]{Mau61}
Krzysztof Maurin.
\newblock Abbildungen vom {H}ilbert-{S}chmidtschen {T}ypus und ihre
  {A}nwendungen.
\newblock {\em Math. Scand.}, 9:359--371, 1961.

\bibitem[Mau67]{Mau67}
Krzysztof Maurin.
\newblock {\em Methods of {H}ilbert spaces}.
\newblock Monografie Matematyczne [Mathematical Monographs], Tom 45.
  Pa\'{n}stwowe Wydawnictwo Naukowe, Warsaw, 1967.
\newblock Translated from the Polish by Andrzej Alexiewicz and Waclaw
  Zawadowski.

\bibitem[Nel69]{Nel69}
Edward Nelson.
\newblock {\em Topics in dynamics. {I}: {F}lows}.
\newblock Mathematical Notes. Princeton University Press, Princeton, NJ;
  University of Tokyo Press, Tokyo, 1969.

\bibitem[Nel70]{Nel70}
Edward Nelson.
\newblock Operants: {A} functional calculus for non-commuting operators.
\newblock In {\em Functional {A}nalysis and {R}elated {F}ields ({P}roc. {C}onf.
  for {M}. {S}tone, {U}niv. {C}hicago, {C}hicago, {I}ll., 1968)}, pages
  172--187. Springer, New York-Berlin, 1970.

\bibitem[Ped87]{Ped87}
Steen Pedersen.
\newblock Spectral theory of commuting selfadjoint partial differential
  operators.
\newblock {\em J. Funct. Anal.}, 73(1):122--134, 1987.

\bibitem[RS80]{ReSi80}
Michael Reed and Barry Simon.
\newblock {\em Methods of modern mathematical physics. {I}}.
\newblock Academic Press, Inc. [Harcourt Brace Jovanovich, Publishers], New
  York, second edition, 1980.
\newblock Functional analysis.

\bibitem[Tao04]{Tao04}
Terence Tao.
\newblock Fuglede's conjecture is false in 5 and higher dimensions.
\newblock {\em Math. Res. Lett.}, 11(2-3):251--258, 2004.

\bibitem[vN49]{vN49}
John von Neumann.
\newblock On rings of operators. {R}eduction theory.
\newblock {\em Ann. of Math. (2)}, 50:401--485, 1949.

\bibitem[WYZ24]{WYZ24}
Cong Wang, Feng-Li Yin, and Min-Min Zhang.
\newblock Spectrality of {C}antor-{M}oran measures with three-element digit
  sets.
\newblock {\em Forum Math.}, 36(2):429--445, 2024.

\bibitem[YZ24]{YZ24}
Fengli Yin and Minmin Zhang.
\newblock The non-spectrality of a class of {S}ierpinski-type self-similar
  measures.
\newblock {\em J. Anhui Univ. Nat. Sci.}, 48(2):29--33, 2024.

\end{thebibliography}

\end{document}